\documentclass{amsart}
\usepackage{xspace,amssymb,bbm}
\theoremstyle{plain}
\newtheorem{nthr}{Theorem}[section]
\newtheorem{theorem}		[nthr]{Theorem}
\newtheorem{proposition}[nthr]{Proposition}
\newtheorem{lemma}		[nthr]{Lemma}
\newtheorem{corollary}	[nthr]{Corollary}

\newtheorem{conjecture} [nthr]{Conjecture}
\newtheorem{remark}		[nthr]{Remark}
\theoremstyle{definition}
\newtheorem{definition} [nthr]{Definition}
\newtheorem{example}		[nthr]{Example}

\def\mat#1{\ensuremath{#1}\xspace}
\def\dmat#1#2{\gdef#1{\mat{#2}}}
\def\oper#1#2{\dmat#1{\operatorname{#2}}}

\dmat\bk{\Bbbk}
\dmat\bA{\mathbb{A}}
\dmat\bC{\mathbb{C}}
\dmat\bF{\mathbb{F}}
\dmat\bG{\mathbb{G}}
\dmat\bH{\mathbb{H}}
\dmat\bL{\mathbb{L}}
\dmat\bN{\mathbb{N}}
\dmat\bQ{\mathbb{Q}}
\dmat\bR{\mathbb{R}}
\dmat\bT{\mathbb{T}}
\dmat\bZ{\mathbb{Z}}
\dmat\cC{\mathcal{C}}
\dmat\cG{\mathcal{G}}
\dmat\cP{\mathcal{P}}
\dmat\cT{\mathcal{T}}
\dmat\cV{\mathcal{V}}

\dmat\al\alpha
\dmat\be\beta
\dmat\ga\gamma
\dmat\de\delta
\dmat\eps{\varepsilon}
\dmat\hi\chi
\dmat\vi\varphi
\dmat\te\theta
\dmat\ka\kappa
\dmat\si\sigma
\dmat\Ga\Gamma
\dmat\Om\Omega

\def\ang#1{\mat{\left\langle #1\right\rangle}}

\def\set#1{\mat{\{#1\}}}
\def\sets#1#2{\mat{\{#1\mid#2\}}}
\def\sb{\subset}
\def\ms{\backslash}
\def\ts{\otimes}
\dmat\oh{\frac12}
\dmat\Gm{{\bG_{\mathrm m}}}

\def\inv{^{-1}}
\def\vir{{\mathrm{vir}}}
\def\n#1{\mat{\lvert#1\rvert}}
\def\nn#1{\mat{\lVert#1\rVert}}
\def\ie{i.e.\ }
\def\mto{\mapsto}
\def\lb#1{\mat{\underline{#1}}} 
\def\ub#1{\mat{\overline{#1}}}  
\def\udim{\lb\dim}
\def\bop{\bigoplus}
\def\pser#1{[\![#1]\!]} 
\def\xx{\times}
\def\iso{\simeq}
\def\over#1#2{\mat{\substack{#1\\#2}}} 
\dmat\es{\varnothing}
\def\arr^#1{\xrightarrow{#1}}
\def\ind{\mathrm{ind}}
\def\df{^\mathrm{def}}

\oper\Var{Var}
\oper\Hom{Hom}
\oper\GL{GL}
\oper\supp{supp}
\oper\End{End}
\oper\Iso{Iso}

\newif\ifrem\remtrue

\usepackage{hyperref}

\def\ob{\bar\Om^+}
\def\oa{\bar\Om^-}
\def\gm{\mathbf g}
\def\preceq{\le}
\def\prec{<}
\def\hq{Q}
\def\gme{q^{\oh}-q^{-\oh}} 
\def\GIT{/\!\!/}
\def\triv{\mathrm{triv}}
\def\nnn#1{[#1]}
\def\one{\mat{\mathbbm1}}
 
\remfalse

\begin{document}
\title[Abelian quiver invariants]{Abelian quiver invariants and marginal wall-crossing}
\author{Sergey Mozgovoy}%
\author{Markus Reineke}%
\email{mozgovoy@maths.tcd.ie}
\email{reineke@math.uni-wuppertal.de}
\date{\today}
\begin{abstract}  We prove the equivalence of (a slightly modified version of) the wall-crossing formula of Manschot, Pioline and Sen and the wall-crossing formula of Kontsevich and Soibelman. The former involves abelian analogues of the motivic Donaldson-Thomas type invariants of quivers with stability introduced by Kontsevich and Soibelman, for which we derive positivity and geometricity properties.
\end{abstract}

\maketitle

\section{Introduction}
Recently Manschot, Pioline, and Sen \cite{manschot_wall} proposed an explicit wall-crossing formula (called MPS wall-crossing in the following) for the BPS invariants in the rank two lattice of charges by using multi-centered black hole solutions in supergravity. They also conjectured that their formula is equivalent to the Kontsevich-Soibelman (KS) wall-crossing formula \cite{kontsevich_stability} in the refined case and to the Joyce-Song wall-crossing formula \cite{joyce_theory} in the unrefined case. In this paper we will show that a slightly modified MPS formula is equivalent to the KS wall-crossing formula.

A key ingredient of the MPS wall-crossing formula are abelian analogues of the motivic Donaldson-Thomas invariants of \cite{kontsevich_cohomological}. We will prove integrality and positivity properties of these and related invariants, and confirm a hypothesis of \cite{mps_blackhole} on their geometric nature.\\[1ex]
Let us describe first the KS wall-crossing formula (or HN recursion).
Let $\Ga$ be a rank $2$ lattice with a non-degenerate, integer valued skew-symmetric form $\ang{-,-}$ and let $\Ga_+\sb\Ga$ be a monoid having two generators. We define a total preorder on $\Ga_+^*=\Ga_+\ms\set0$ by setting $\al\le\be$ if $\ang{\al,\be}\ge0$. Similarly we order rays $l=\bR_{>0}\ga\sb\Ga\ts\bR$ with $\ga\in\Ga_+^*$.
Assume now that we have two families of (refined, rational DT) invariants $\oa_\ga,\ob_\ga$ for $\ga\in\Ga_+^*$, which are related by an equation of ordered products over rays taken in clockwise (resp.~anticlockwise) order
\begin{equation}
\prod^{\curvearrowright}_l\exp\left(\frac{\sum_{\ga\in l\cap\Ga}\ob_\ga x^\ga}{\gme}\right)
=\prod^{\curvearrowleft}_l\exp\left(\frac{\sum_{\ga\in l\cap\Ga}\oa_\ga x^\ga}{\gme}\right)
\label{eq:ks1}
\end{equation}
in the quantum torus (see Section \ref{subs:qa}) of $\Ga$. This is the KS wall-crossing formula. It allows us to recursively express the invariants $\ob_\ga$ in terms of the invariants $\oa_\ga$. For any nonzero $m:\Ga_+^*\to\bN$ with finite support define $\nn m=\sum m(\al)\al\in\Ga_+^*$ and $m!=\prod m(\al)!\in\bN$. Then we can write (cf.\ \cite[Eq.1.5]{manschot_wall})
\begin{equation}
\ob_\ga=\sum_{\over{m:\Ga_+^*\to\bN}{\nn m=\ga}}\frac{g(m)}{m!}\prod_{\al\in\Ga_+^*}(\oa_\al)^{m(\al)}
\label{eq:mps1}
\end{equation}
for some invariants $g(m)$. The computation of these invariants is recursive and is rather difficult (see however \cite{reineke_harder-narasimhan}). 

Manschot, Pioline, and Sen \cite{manschot_wall} suggested the following description of the invariants $g(m)$. They first construct invariants $g(\al_1,\dots,\al_n)$ for non-parallel $\al_i\in\Ga$, then extend their formula to non-parallel $\al_i\in\Ga\ts\bR$ and finally take limits to allow identical or parallel $\al_i$. The map $m:\Ga_+^*:\to\bN$ corresponding to $(\al_1,\dots,\al_n)$ is given by $m(\al)=\#{\set{i:\al_i=\al}}$. The equivalence of the KS wall-crossing formula \eqref{eq:ks1} and the MPS wall-crossing formula \eqref{eq:mps1} (with the above description of the invariants $g(m)$) was proved by Sen \cite{sen_equivalence}.

In this paper we give a slightly different description of the invariants $g(m)$ which leads us to the introduction of the abelian quiver invariants. For any $m:\Ga_+^*\to\bN$ define a quiver $\hq(m)$ with vertices $\al_k$, where $\al\in\Ga_+^*$ and $1\le k\le m(\al)$. Let the number of vertices from $\al_k$ to $\be_{k'}$ be $\ang{\be,\al}$ if $\ang{\be,\al}>0$ and zero otherwise. We define the invariant $f_+(m)$ to be the motivic invariant of the moduli stack of semistable abelian representations of $\hq(m)$, where abelian means that the representation has dimension one at every vertex of the quiver.
An explicit formula for the invariant $f_+(m)$ can be obtained by using the results of \cite{reineke_harder-narasimhan} (see also Corollary \ref{cr:abelian solution}).
Given a ray $l\sb\Ga\ts\bR$, we define invariants $g(m)$ with $\nn m\in l$, by the formula
\begin{equation}
1+\sum_{\nn m\in l}f_+(m)\frac{x^m}{m!}
=\exp\Bigg(\frac{\sum_{\nn m\in l}g(m)\frac{x^m}{m!}}{\gme}\Bigg).
\label{eq:mps2}
\end{equation}
Note that if $\nn m\in \Ga_+^*$ is indivisible then $g(m)=(\gme)f_+(m)$.
Our first result is the following equivalence conjectured by Manschot, Pioline, and Sen \cite{manschot_wall}.
\begin{theorem}
The MPS wall-crossing formula (equations \eqref{eq:mps1} and \eqref{eq:mps2}) is equivalent to the KS wall-crossing formula (equation \eqref{eq:ks1}).
\end{theorem}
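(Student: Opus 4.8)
The plan is to show that the family $(\ob_\ga)_{\ga\in\Ga_+^*}$ defined by the MPS recursion \eqref{eq:mps1}--\eqref{eq:mps2} satisfies \eqref{eq:ks1}; since \eqref{eq:ks1}, read as a Harder--Narasimhan recursion, determines $\ob$ uniquely from $\oa$, and since both sides write each $\ob_\ga$ as one fixed universal polynomial in the $\oa_\al$ (the monomials $\prod_\al\oa_\al^{m(\al)}$ with $\nn m=\ga$ being linearly independent), this already gives the equivalence. I would treat the $\oa_\al$ as independent formal variables, complete $\cT$ along the $\Ga_+$-grading, and reduce by polarisation: substitute $\oa_\al=\sum_{k\ge1}\oa_\al^{(k)}$ with new formal variables and compare, for each multidegree, the coefficient of the squarefree monomial $\prod_\al\prod_{k\le m(\al)}\oa_\al^{(k)}$. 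A power series in the $\oa_\al$ is recovered from these coefficients, so it suffices that all of them match.

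The core is an abelianisation. Attach to each $\al\in\Ga_+^*$ and $k\ge1$ a symbol $\al_k$; on the lattice $\hat\Ga$ they span use the skew form $\ang{\al_k,\be_{k'}}=\ang{\al,\be}$ (so distinct copies of one $\al$ commute, matching the loop-free nature of $\hq(m)$) and let $\hat\cT$ be the associated quantum torus. Two structural facts are used. First, for the arrow numbers defining $\hq(m)$ one checks $\max(\ang{\al,\be},0)-\max(\ang{\be,\al},0)=\ang{\al,\be}$, so the antisymmetrised Euler form of the quiver on the vertices $\al_k$ is exactly this pulled-back form, and (up to the usual sign twist) the quantum torus of $\hq(m)$ is that of the sublattice spanned by the $\al_k$ with $k\le m(\al)$. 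Second, since the form on $\hat\Ga$ is pulled back from $\ang{-,-}$, the assignment $x^{\al_k}\mapsto\tfrac{\oa_\al^{(k)}}{\gme}\,x^\al$ extends to an algebra homomorphism $\vi\colon\hat\cT\to\cT$. The ray preorder on $\Ga_+^*$ pulls back to $\hat\Ga$, with $\al_k$ and $\be_{k'}$ equivalent exactly when $\al,\be$ span the same ray.

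Inside $\hat\cT$ I would then run the Harder--Narasimhan (equivalently KS) recursion for abelian representations of the quiver on all the $\al_k$: the generating series of all abelian dimension vectors equals the clockwise ordered product over slopes of the series of semistable abelian representations of each slope, and also the anticlockwise ordered product of the elementary rank-one factors of the individual vertices. By the definition of $f_+$ and Reineke's recursion \cite{reineke_harder-narasimhan}, the slope-$l$ factor on the former side is $1+\sum_{\nn m\in l}f_+(m)\,x^m/m!$ (the $m!$ reconciling labelled and unlabelled copies of each $\al$), whose $\gme$-logarithm is precisely the defining relation \eqref{eq:mps2} for $g(m)$. Now apply $\vi$ and extract squarefree coefficients. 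On the elementary side, within each ray $\exp\bigl(\tfrac1{\gme}\sum_{\ga\in l\cap\Ga}\oa_\ga x^\ga\bigr)$ and $\prod_{\ga\in l,\,k}\bigl(1+\tfrac{\oa_\ga^{(k)}}{\gme}x^\ga\bigr)$ have the same squarefree part (the difference lies only in monomials containing a square of some $\oa_\ga^{(k)}$), so the anticlockwise side becomes the right-hand side of \eqref{eq:ks1}; since $\vi$ is an algebra map, the clockwise side becomes $\prod^{\curvearrowright}_l\exp\bigl(\tfrac1{\gme}\sum_{\ga\in l\cap\Ga}\ob_\ga x^\ga\bigr)$ with $\ob_\ga=\sum_{\nn m=\ga}\tfrac{g(m)}{m!}\prod_\al\oa_\al^{m(\al)}$, which is \eqref{eq:mps1}. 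Equating the two images of the abelianised identity is exactly \eqref{eq:ks1} for the MPS-defined $\ob$.

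The main obstacle is pinning down normalisations in the previous step: one has to fix the motivic/quantum weights of the abelian-representation generating series and the precise stability meant by "semistable abelian representation of $\hq(m)$" (the one induced by the ray structure), so that the symmetry factors $m!$, the denominators $\gme$ and the quiver sign twist fall into place exactly as in \eqref{eq:mps1}--\eqref{eq:mps2}; one must also verify that $\vi$ is well defined, that clockwise/anticlockwise matches decreasing/increasing slope for the chosen sign conventions of $\ang{-,-}$, and that all products converge in the relevant completions, which the polarisation of the first step ensures by making each homogeneous piece a finite expression. Granting this, the rest is routine manipulation of ordered products in the quantum torus together with the Harder--Narasimhan recursion for quivers.
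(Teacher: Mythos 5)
The proposal follows the same overall route as the paper's proof in Section~\ref{mpswcf}: introduce the abelianized quiver on vertices $\al_k$ (the paper's $\hq$, with dimension vector $m$ encoding how many copies $\al_k$ of each $\al$ are used), check that the arrow counts $\max(\ang{\be,\al},0)$ antisymmetrise to the pulled-back form $\ang{\al,\be}$ so that the associated quantum torus maps compatibly to the original one, identify the slope-$l$ factor $1+\sum_{\nn m\in l}f_+(m)\,x^m/m!$ with $\exp\bigl(\tfrac1{\gme}\sum g(m)\,x^m/m!\bigr)$ via \eqref{eq:mps2}, and then use the abelian wall-crossing of Theorem~\ref{th:wall-cross} to rewrite the ordered product as the ordered product in the opposite chamber, where the only abelian semistable representations of $\hq(m)$ are those supported on a single ray (no arrows) and the factors collapse to the pure exponentials on the right of \eqref{eq:ks1}.

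Two remarks on where your write-up diverges from the paper's execution. First, you reach the abelian identity by polarising the $\oa_\al$ into fresh variables $\oa_\al^{(k)}$, working in a labelled torus $\hat\cT$ on generators $x^{\al_k}$, and extracting squarefree coefficients at the end; the paper instead treats the $\oa_\ga$ as formal and substitutes $x_\ga$ for $\oa_\ga x^\ga$, which lands \eqref{eq:ks1} directly in the $\hq$-quantum torus $\cV\pser{x_\ga}$ where Theorem~\ref{th:wall-cross} (stated in the variables $x^m/m!$) applies verbatim. Both devices do the same bookkeeping, but the paper's substitution avoids introducing and then eliminating a second set of generators. Second, on the normalisation you flag as ``the main obstacle'': with $\vi(x^{\al_k})=\oa_\al^{(k)}x^\al/(q^\oh-q^{-\oh})$ the factor $1/(q^\oh-q^{-\oh})$ coming from the $\Gm$-automorphism of a one-vertex abelian representation is effectively counted twice when you compare against the squarefree part of $\exp\bigl((q^\oh-q^{-\oh})^{-1}\sum\oa_\ga x^\ga\bigr)$. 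You should either take $\vi(x^{\al_k})=\oa_\al^{(k)}x^\al$ and keep the weight $1/(q^\oh-q^{-\oh})$ inside the elementary factor $1+(q^\oh-q^{-\oh})^{-1}x^{\al_k}$, or keep your $\vi$ but strip that weight from the abelian generating series; as it stands the two sides differ by a power of $q^\oh-q^{-\oh}$. Once this single normalisation is fixed the argument is the one in the paper.
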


For the proof of the above theorem we will closely study abelian quiver representations. Let $Q$ be a quiver with a fixed stability function $Z$ on the group of dimension vectors $\Ga(Q)$. For any dimension vector $\al\in\Ga(Q)$ we construct a new quiver $Q(\al)$ with vertices $i_k$, where $i\in Q_0$ and $1\le k\le\al_i$. Let the number of arrows from $i_k$ to $j_{k'}$ in $Q(\al)$ be the number of arrows from $i$ to $j$ in $Q$ (compare this construction with the above construction of the quiver $\hq(m)$). The quiver $Q(\al)$ inherits a stability function from $Q$, and we can define the moduli space of abelian semistable representations of $Q(\al)$. Let $f_Z(\al)$ be the motivic invariant of this moduli space. Our second result is the following analog of the HN recursion \cite{reineke_harder-narasimhan} (or KS wall-crossing formula \cite{kontsevich_stability})
\begin{theorem}
The ordered product
\begin{equation}
\prod_{l}^\curvearrowright\bigg(1+\sum_{Z(\al)\in l}f_Z(\al)\frac{x^\al}{\al!}\bigg)
\end{equation}
in the quantum torus of $Q$ is independent of the stability function $Z$.
\end{theorem}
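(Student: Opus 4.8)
The plan is to produce one element of the completed quantum torus of $Q$ that is visibly independent of $Z$ and that equals the displayed ordered product for \emph{every} stability function; the statement then follows at once. The natural candidate is the ``total'' series
\[
A_Q \;:=\; 1+\sum_{\al\neq 0}a(\al)\,\frac{x^\al}{\al!},
\]
where $a(\al)$ is defined exactly like $f_Z(\al)$, but with the semistable locus replaced by the whole space $\bA^{N(\al)}$ of abelian representations of $Q(\al)$ ($N(\al)$ being the number of arrows of $Q(\al)$ and $|\al|=\sum_i\al_i$ the number of its vertices, so that $a(\al)$ is the normalized motivic invariant of the stack $\bigl[\bA^{N(\al)}/\Gm^{|\al|}\bigr]$). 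This is the abelian counterpart of the total generating function used in the Harder--Narasimhan recursion of \cite{reineke_harder-narasimhan}, and it involves no stability. Thus the whole problem reduces to the abelian HN identity
\[
A_Q \;=\; \prod_{l}^{\curvearrowright}\Bigl(1+\sum_{Z(\al)\in l}f_Z(\al)\,\tfrac{x^\al}{\al!}\Bigr).
\]

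Next I would set up Harder--Narasimhan filtrations in the abelian setting. The category of all representations of $Q(\al)$ is an abelian category of finite length, and $Z$, pulled back along $\Ga(Q(\al))\to\Ga(Q)$ (which depends only on the vertex type), defines a slope function; hence every representation has a unique HN filtration with semistable subquotients of strictly decreasing slope. Every subrepresentation and every quotient of an abelian representation $V$ of $Q(\al)$ has dimension vector $\le\mathbbm1$, hence restricts, on the full subquiver supporting it, to an abelian representation; since the number of arrows of $Q(\al)$ from $i_k$ to $j_{k'}$ depends only on $i$ and $j$, such a subquiver on a support of ``type'' $\be\in\Ga(Q)$ is isomorphic to $Q(\be)$. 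The HN subquotients of $V$ therefore give a decomposition $\al=\be^1+\dots+\be^s$ together with semistable abelian representations of $Q(\be^1),\dots,Q(\be^s)$ of strictly decreasing slope whose supports partition $Q(\al)_0$; and since isomorphisms of representations do not permute vertices, the resulting ordered partition $(S^1,\dots,S^s)$ of $Q(\al)_0$ is a well-defined invariant of the isomorphism class of $V$ (this extra rigidity over the non-abelian case is what will produce the factorials).

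Then I would stratify the stack $\bigl[\bA^{N(\al)}/\Gm^{|\al|}\bigr]$ by HN type and compute. For a fixed tuple $(\be^1,\dots,\be^s)$ with $\sum_t\be^t=\al$ and strictly decreasing slopes, the ordered partitions of $Q(\al)_0$ into blocks of sizes $\be^1,\dots,\be^s$ number $\al!/(\be^1!\cdots\be^s!)$, and, being permuted transitively by $\prod_i S_{\al_i}$, the corresponding strata are mutually isomorphic. In one such stratum the HN property forces every arrow from a block of smaller index to one of larger index to vanish; since the gauge group of an abelian quiver is merely a torus, it preserves every coordinate flag and contributes no unipotent part, so the stratum is the quotient
\[
\Bigl[\,\bA^{d}\times{\textstyle\prod_{t}} R^{\mathrm{sst}}_{\be^t}\ \Big/\ {\textstyle\prod_{t}}\Gm^{|\be^t|}\,\Bigr],
\qquad d={\textstyle\sum_{1\le b<a\le s}}A(\be^a,\be^b),
\]
where $A(-,-)$ is the arrow part of the Euler form of $Q$ and $R^{\mathrm{sst}}_{\be^t}$ is the semistable locus in $\bA^{N(\be^t)}$; being a Zariski-locally trivial affine bundle over $\prod_t\bigl[R^{\mathrm{sst}}_{\be^t}/\Gm^{|\be^t|}\bigr]$, its invariant is $q^{d}$ times the product of the invariants of the semistable substacks $\bigl[R^{\mathrm{sst}}_{\be^t}/\Gm^{|\be^t|}\bigr]$. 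Collecting these contributions over all HN types and dividing by $\al!$, the multinomial coefficients turn the sum over tuples into the ordered product of the series $1+\sum_{Z(\al)\in l}f_Z(\al)x^\al/\al!$, while the weights $q^{d}$ --- together with the normalizations of $f_Z$ and of the quantum-torus product fixed through the (skew-symmetrized) Euler form in Section~\ref{subs:qa}, the symmetric part of $A$ being absorbed into the diagonal normalization of $f_Z$ --- reassemble into the noncommutative multiplication of the quantum torus, the rays being taken in the clockwise order $\curvearrowright$. This is exactly the identity above; since $A_Q$ does not see $Z$, the ordered product is independent of $Z$.

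I expect the main obstacle to be precisely that last bookkeeping step: pinning down the interplay between the affine-bundle weights $q^{\sum_{b<a}A(\be^a,\be^b)}$, the reordering powers of $q^{\oh}$ in the quantum torus, and the exact normalization relating $f_Z(\al)$ to the motivic invariant of the abelian semistable moduli space (or stack) of $Q(\al)$, so that the combinatorial HN sum becomes \emph{on the nose} the stated product in the clockwise order. Two further points are harmless but worth recording. First, strictly semistable abelian representations are unavoidable here --- the stability induced on $Q(\al)$ is never generic, since distinct sub-dimension-vectors frequently share a slope --- but HN filtrations exist and are unique irrespective of genericity, and $f_Z(\al)$ is by definition the invariant of the \emph{entire} semistable locus, so the stratification is unaffected. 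Second, all the ordered products converge in the completion of the quantum torus because there are only finitely many HN types below any given dimension vector, exactly as in \cite{reineke_harder-narasimhan}.
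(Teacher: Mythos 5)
Your proof follows essentially the same route as the paper's: both stratify the stack of \emph{all} abelian representations of $Q(\al)$ (equivalently, work with the trivial-stability invariant $f_{\triv}(\al)$, your $a(\al)$) by Harder--Narasimhan type, observe that the HN subquotients are determined by an ordered partition of $Q(\al)_0$ projecting to a slope-decreasing decomposition $\al=\al^1+\cdots+\al^s$, count the ordered partitions with multinomial coefficients $\binom{\al}{\al^1,\dots,\al^s}$, and let the affine-bundle weights match the twist in the quantum torus. The only difference is one of exposition: the paper states the multinomial count and asserts the identity, while you unpack the stratum structure, the affine-bundle contribution $q^{\sum_{b<a} r(\al^a,\al^b)}$, and honestly flag the final power-of-$q$ bookkeeping as the step to be checked; that bookkeeping does close (it is exactly the coincidence implicit in the paper's final sentence), so the argument is complete.
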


The recursion formula that one obtains from the above theorem can be solved using the method of \cite{reineke_harder-narasimhan} (see Corollary \ref{cr:abelian solution}). We define abelian quiver invariants $g_Z(\al)$ by the formula
\begin{equation}
1+\sum_{Z(\al)\in l}f_Z(\al)\frac{x^\al}{\al!}
=\exp\Bigg(\frac{\sum_{Z(\al)\in l}g_Z(\al)\frac{x^\al}{\al!}}{\gme}\Bigg).
\end{equation}
Similarly to the case of motivic DT invariants of quivers with stability \cite{kontsevich_cohomological}, we can ask about polynomiality, integrality, and positivity properties of the invariants $g_Z(\al)$. Our third result is

\begin{theorem} Assume that the ray $l$ is such that $\langle \alpha,\beta\rangle=0$ whenever $Z(\alpha),Z(\beta)\in l$ and $f_Z(\alpha),f_Z(\beta)\not=0$. Then $g_Z(\al)\in\bN[q^{\pm\oh}]$.
\end{theorem}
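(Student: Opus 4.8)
The plan is to use the hypothesis to pass to a commutative setting, to dispose of the ``indecomposable'' classes directly --- where $g_Z(\al)$ is, up to normalisation, a genuine Poincar\'e polynomial --- and then to treat the remaining classes, in particular the divisible ones that carry the whole difficulty, by a cohomological-integrality argument made tractable precisely by the vanishing of the form. To begin, under the hypothesis any two monomials $x^\al,x^\be$ with $Z(\al),Z(\be)\in l$ and $f_Z(\al),f_Z(\be)\neq0$ commute in the quantum torus, so the equation defining the $g_Z(\al)$ holds in a commutative power series ring over $\bQ(q^{\oh})$ and gives
\[
\sum_{Z(\al)\in l}g_Z(\al)\frac{x^\al}{\al!}=(\gme)\log\!\Bigl(1+\sum_{Z(\al)\in l}f_Z(\al)\frac{x^\al}{\al!}\Bigr).
\]
Thus the $g_Z(\al)$ are well-defined elements of $\bQ(q^{\oh})$ depending only on the $f_Z(\be)$ with $Z(\be)\in l$, and the theorem asks for two things: that they are Laurent polynomials (integrality) and that their coefficients are non-negative (positivity).

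Next, call $\al$ \emph{$l$-indecomposable} if it is not a sum of two nonzero dimension vectors whose $Z$-values both lie in $l$. If a semistable abelian representation of $Q(\al)$ fails to be stable it has a proper subrepresentation of slope $\mu(l)$, and the underlying $Q$-dimension vector of that subrepresentation together with the complementary one makes $\al$ $l$-decomposable; hence for $l$-indecomposable $\al$ there are no strictly semistable abelian representations of $Q(\al)$, the moduli space $M_\al$ (whose motivic invariant is $f_Z(\al)$) is smooth, and the displayed identity collapses to $g_Z(\al)=(\gme)f_Z(\al)$, which in the chosen normalisation is $q^{-\oh\dim M_\al}[M_\al]$ --- an element of $\bN[q^{\pm\oh}]$, since $M_\al$ is a smooth toric variety and so $[M_\al]\in\bN[q]$. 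This step uses no hypothesis. For a general $\al$ on $l$, feeding the motivic Harder--Narasimhan recursion on the fixed slope $\mu(l)$ --- the mechanism of \cite{reineke_harder-narasimhan} behind Corollary~\ref{cr:abelian solution} --- into the commutative identity shows that $1+\sum_{Z(\be)\in l}f_Z(\be)x^\be/\be!$ is the exponential of the generating series of the simple objects of the abelian category of $\mu(l)$-semistable representations, so that $g_Z(\al)$ is the refined Donaldson--Thomas invariant of this category, the $l$-indecomposable classes furnishing its leading terms.

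The essential case is that of $l$-decomposable, in particular divisible, $\al$: now $M_\al$ acquires strictly semistable points, $g_Z(\al)$ is no longer manifestly geometric, and the hypothesis is indispensable. The vanishing of $\ang{-,-}$ on the relevant sublattice makes the BPS algebra of the slope-$\mu(l)$ category (super)commutative, which trivialises the Poincar\'e--Birkhoff--Witt / cohomological-integrality mechanism and identifies $g_Z(\al)$, up to a monomial in $q^{\oh}$, with the intersection cohomology of the toric variety $M_\al=\bA^{E}\GIT(\Gm)^{n-1}$ ($E,n$ the numbers of arrows and vertices of $Q(\al)$); the intersection cohomology of a toric variety is pure of Tate type with non-negative Betti numbers, so this lies in $\bN[q]$ and $g_Z(\al)\in\bN[q^{\pm\oh}]$. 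The main obstacle is exactly this last step, in two parts: (i) formulating and proving the simplified cohomological-integrality statement that holds once the form vanishes, so that $g_Z(\al)$ really is a monomial times the intersection cohomology of $M_\al$ rather than merely a rational function --- alternatively one may try to derive it from the explicit solution of Corollary~\ref{cr:abelian solution}, recognising the resulting alternating sum of $q$-multinomials as positive; and (ii) the purity and non-negativity of the intersection cohomology of the possibly singular --- and, for a trivial linearisation, affine --- toric quotients $M_\al$. Everything else is bookkeeping in a commutative ring together with the elementary geometry of quiver moduli of abelian representations.
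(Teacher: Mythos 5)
Your reduction to the commutative case and your treatment of the ``$l$-indecomposable'' dimension vectors are correct in spirit: if $\al$ does not split as a sum of two nonzero vectors whose $Z$-values lie in $l$, there are no strictly semistable abelian representations of $Q(\al)$, $M_Z(Q(\al))$ is smooth, and $g_Z(\al)=(\gme)f_Z(\al)=[M_Z(Q(\al))]_\vir$. Even here, though, ``smooth toric, hence $[M_\al]\in\bN[q]$'' is not automatic (a general toric variety decomposes into torus orbits and gives a class in $\bZ[q]$, not obviously in $\bN[q]$); what actually produces positivity in the paper is the circle-compact $\Gm$-action of Proposition \ref{purity}, which gives a Bialynicki-Birula cell decomposition and hence a counting polynomial in $\bN[q]$.

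The genuine gap is in the $l$-decomposable case, and you name it yourself as ``the main obstacle.'' Your plan invokes the BPS Lie algebra, a cohomological-integrality/PBW theorem, and purity of intersection cohomology of the singular (possibly affine) GIT quotients $M_\al$; none of this is established, each piece is a substantial theorem in its own right, and the last (purity of IC of a non-proper toric quotient) is genuinely delicate. As written, you have reduced the theorem to a harder-looking and unproved collection of statements rather than proved it.

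The paper takes a different and more elementary route that sidesteps intersection cohomology entirely: deform $Z$ to a $\one$-generic stability $Z\df$ on $Q(\al)$ satisfying the compatibilities of Lemma \ref{lm:deform}. Then the Harder--Narasimhan stratification of $R_Z(Q(\al))$ with respect to $Z\df$ restricts to a stratification of the $Z$-semistable locus by unordered decompositions $Q(\al)_0=I_1\dot\cup\dots\dot\cup I_s$ with $\mu_Z(I_k)=\mu_Z(\al)$, and the numerical hypothesis $\ang{\al,\be}=0$ is used at exactly one point: to show the affine fibration dimension over each stratum is the symmetric quantity $\sum_{k<l}r_Q(\pi(I_k),\pi(I_l))$, \emph{independent of the ordering}. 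Unwinding the resulting relative HN recursion against the defining exponential shows $g_Z(\al)=[M_{Z\df}(Q(\al))]_\vir$, i.e.\ the virtual motive of a \emph{smooth} moduli space of abelian representations for the deformed stability, and positivity then falls out of Proposition \ref{purity}. So where you propose to compute IC of the singular $M_Z(Q(\al))$, the paper instead replaces it by a smooth small resolution $M_{Z\df}(Q(\al))\to M_Z(Q(\al))$ whose ordinary motive is manifestly in $\bN[q]$. If you want to salvage your approach, the cleanest move is to abandon the IC/BPS framing and try to prove directly, from Corollary \ref{cr:abelian solution} plus the deformed-stability stratification, that $g_Z(\al)$ is the motive of a smooth quiver moduli space --- which is precisely what the paper does.
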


In the case of a trivial stability $Z$, we give an explicit formula for $g_Z(\al)$ in terms of natural statistics on spanning trees in a graph. In the general case, the above theorem follows from

\begin{theorem} Under the assumptions of the previous theorem, the abelian invariant $g_Z(\al)$ equals the motive $[M_{Z\df}(Q(\alpha))]_{\rm vir}$ of the moduli space of abelian representations which are stable with respect to a suitably deformed stability $Z\df$.
\end{theorem}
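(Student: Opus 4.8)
\noindent The plan is to identify $g_Z(\al)$ with the virtual motive of the moduli space of abelian representations of $Q(\al)$ that are stable for a generic perturbation of the inherited stability, by transporting the Harder--Narasimhan recursion for $Q(\al)$ from the inherited stability to that perturbation inside a commutative ring. To this end, first record the one consequence of the hypothesis that is used: if $S,S'\sb Q(\al)_0$ are disjoint with types $\be=\operatorname{type}(S)$, $\be'=\operatorname{type}(S')$ --- so that $Q(\al)|_S\iso Q(\be)$, $Q(\al)|_{S'}\iso Q(\be')$ carry the inherited stability --- then the skew form of $Q(\al)$ satisfies $\ang{\one_S,\one_{S'}}=\ang{\be,\be'}$, which vanishes whenever $Z(\be),Z(\be')\in l$ and $f_Z(\be),f_Z(\be')\neq0$. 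Hence, on the $\bZ$-span of those $\one_S$ that can occur as a Jordan--H\"older subquotient of a slope-$l$ semistable abelian representation of $Q(\al)$, the skew form of $Q(\al)$ vanishes, so there the quantum torus of $Q(\al)$ is not just commutative but trivial, and all ordered products and all $\exp/\log$ below are the ordinary ones in the commutative polynomial ring in the variables $z=(z_v)_{v\in Q(\al)_0}$.

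\smallskip
The second step rewrites $g_Z(\al)$ in terms of $Q(\al)$. A subrepresentation of an abelian representation of $Q(\al)$ is again abelian and supported on a vertex subset, and ordered partitions of $Q(\al)_0$ into blocks of prescribed types are counted by multinomial coefficients; this turns the equation defining $g_Z$ into the chain of identities
\[
\frac{g_Z(\al)}{\gme}=\al!\,[x^\al]\log\Bigl(1+\sum_{Z(\de)\in l}f_Z(\de)\tfrac{x^\de}{\de!}\Bigr)=[z^{Q(\al)_0}]\log\mathcal C^{=l}_Z,\qquad
\mathcal C^{=l''}_\eta:=\sum_{\eta(\one_S)\in l''}\bigl[\mathfrak M^{ss}_\eta(Q(\al)|_S,\one_S)\bigr]_{\vir}z^S,
\]
where $Z$ denotes the inherited stability on $Q(\al)$ and $\mathfrak M^{ss}_\eta$ is the moduli \emph{stack}. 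Thus $g_Z(\al)/(\gme)$ is the coefficient of the top monomial $z^{Q(\al)_0}$ in the logarithm of the slope-$l$ piece of the abelian-sector series of $Q(\al)$ for the inherited stability.

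\smallskip
The third step deforms and concludes. Reineke's Harder--Narasimhan recursion~\cite{reineke_harder-narasimhan}, applied to $Q(\al)$ with an arbitrary stability $\eta$ and restricted to the abelian sector (whose dimension vectors are closed under passing to sub- and quotient vectors), reads $\mathcal C=\prod_{l''}^{\curvearrowright}\mathcal C^{=l''}_\eta$, where $\mathcal C:=\sum_{S\sb Q(\al)_0}[\mathfrak M(Q(\al)|_S,\one_S)]_{\vir}z^S$ is the (stability-independent) total series; hence $\mathcal C$ does not depend on $\eta$. By the first step the product equals $\exp(\sum_{l''}\log\mathcal C^{=l''}_\eta)$, so $\sum_{l''}\log\mathcal C^{=l''}_\eta=\log\mathcal C$ is $\eta$-independent, and --- since $z^{Q(\al)_0}$ is contributed only by the slope piece through $\eta(\one)$ --- so is $[z^{Q(\al)_0}]\log\mathcal C^{=l_\eta}_\eta$. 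Choose $Z\df$ on $Q(\al)$ generic among small perturbations of the inherited $Z$, so that $\one$ becomes $Z\df$-coprime (no nonzero proper $\one_S$ shares its slope). Then the top-monomial logarithm collapses to the plain coefficient,
\[
[z^{Q(\al)_0}]\log\mathcal C^{=l_{Z\df}}_{Z\df}=[z^{Q(\al)_0}]\mathcal C^{=l_{Z\df}}_{Z\df}=\bigl[\mathfrak M^{ss}_{Z\df}(Q(\al),\one)\bigr]_{\vir},
\]
and coprimality makes $\mathfrak M^{ss}_{Z\df}(Q(\al),\one)=\mathfrak M^{st}_{Z\df}(Q(\al),\one)$ a $\Gm$-gerbe over the smooth projective variety $M_{Z\df}(Q(\al))$, so the right-hand side is $[M_{Z\df}(Q(\al))]_{\vir}/(\gme)$. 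Combining the three displayed identities gives $g_Z(\al)=[M_{Z\df}(Q(\al))]_{\vir}$.

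\smallskip
The main obstacle is the Harder--Narasimhan recursion invoked in the third step: it must be established in precisely the \emph{virtually} normalised series $\mathcal C$ rather than in Reineke's Euler-form-twisted series built from the plain stack motives, i.e.\ one must check that the Euler-form factors produced when gluing along Jordan--H\"older and Harder--Narasimhan filtrations are exactly cancelled by the virtual normalisation --- equivalently, that $\mathcal C$ is the image of the total Hall-algebra element under the integration map --- the point being once more that the hypothesis makes the integration map's target trivial on the relevant span. Secondary points: make the perturbation $Z\df$ precise while keeping $\one$ coprime and the relevant span within the zone where the skew form of $Q(\al)$ vanishes; and dispose of the case $f_Z(\al)=0$, where $M_{Z\df}(Q(\al))=\es$ and the corresponding top coefficient vanishes.
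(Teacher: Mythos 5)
Your overall strategy — deform $Z$ to a generic $Z\df$, reformulate $g_Z(\al)$ as a top-degree coefficient of a $\log$, and compare via Harder--Narasimhan recursion — is the same as the paper's. Your second step (identifying $g_Z(\al)/(\gme)$ with $[z^{Q(\al)_0}]\log\cC^{=l}_Z$ via multinomial coefficients) is a correct reformulation of the defining equation. The gap is in the third step, and it is precisely the ``main obstacle'' you flag at the end but then dismiss too quickly.

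Reineke's recursion $\cC=\prod^{\curvearrowright}_{l''}\cC^{=l''}_\eta$ lives in the \emph{quantum torus} of $Q(\al)$, and the ordered product carries $q^{\oh\ang{\one_{S},\one_{S'}}}$-twists between pieces of \emph{different} slopes $l''\ne l'''$. The virtual normalisation does not cancel these: unwinding, the coefficient of $z^{Q(\al)_0}$ in the product involves factors $q^{\oh\sum_{k<l}\ang{\pi(I_l),\pi(I_k)}}$ for each HN type $(I_1,\dots,I_s)$. The hypothesis of Definition~\ref{df:ab1} only kills $\ang{\pi(I_k),\pi(I_l)}$ when \emph{both} $\pi(I_k)$ and $\pi(I_l)$ have $Z$-slope in the fixed ray $l$ and support semistable representations. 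For a general stability $\eta$ the HN pieces of $\one$ can have arbitrary $\eta$-slopes, the corresponding $Z$-slopes need not lie in $l$, and the twists survive. Hence the equation $\log\cC=\sum_{l''}\log\cC^{=l''}_\eta$ fails in the commutative polynomial ring, and $\eta$-independence of $[z^{Q(\al)_0}]\log\cC^{=l_\eta}_\eta$ does not follow.

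The paper repairs exactly this point by never invoking the full HN recursion for an arbitrary stability. It constructs $Z\df$ (Lemma~\ref{lm:deform}) so that the corollary following it holds: an abelian representation with $Z\df$-HN type $(I_1,\dots,I_s)$ is $Z$-semistable iff $\mu_Z(I_k)=\mu_Z(Q(\al)_0)$ for all $k$. Thus the only $Z\df$-HN strata contributing to $[R_Z(Q(\al))]$ have \emph{all} pieces of $Z$-slope $l$, and for those the hypothesis makes the twist symmetric (``regardless of the ordering of the subsets,'' as in Proposition~\ref{prp:hn deform}). This yields the relative HN identity as an honest commutative identity of virtual motives between $Z$ and $Z\df$ \emph{alone}, which is then unwound by induction. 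To repair your proof you should replace the appeal to an arbitrary $\eta$ by the direct comparison $Z$ vs.\ $Z\df$, using the corollary to justify that only slope-$l$ pieces occur — at which point you are essentially reproducing the paper's Proposition~\ref{prp:hn deform} and its corollary.
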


This result confirms a "geometricity" hypothesis for motivic DT invariants implicit in \cite{mps_blackhole}; namely, there it is assumed that the motivic DT invariant for a quiver with stability always equals the motive of some appropriately defined "moduli space" depending on the quiver, the stability function and the dimension vector.\\[1ex]
The paper is organized as follows: in Section \ref{prelim} we recall basic facts on quivers, stability functions and the associated moduli spaces. Using Harder-Narasimhan techniques, we prove the abelian wall-crossing formula in Section \ref{sec:abelian}. We apply a graph-theoretic lemma in Section \ref{sec:gesselwang} to obtain an explicit formula for the abelian invariants for trivial stabilities. This enables us to discuss the notion of (quantum) admissibility of certain series in quantum tori, in analogy to \cite{kontsevich_cohomological}, in Section \ref{sec:admissible}, and to prove integrality of the abelian invariants. Section \ref{geometricity} contains the proof of the geometricity hypothesis for the abelian invariants, leading to their positivity properties. Motivated by this, we study related invariants counting indecomposable semistable abelian quiver representations in Section \ref{sec:indec}, and discuss their positivity properties and a graph-theoretic interpretation. The equivalence of KS and MPS wall-crossing is proved in Section \ref{mpswcf}; finally, we use abelian wall-crossing in Section \ref{mpsformula} to give a conceptual explanation for the motivic MPS degeneration formula of \cite{reineke_mps}.\\[1ex]
{\bf Acknowledgments:}
The first named author would like to thank Tam\'as Hausel, 
Boris Pioline and Ashoke Sen for helpful discussions.
 The second named author would like to thank Jan Manschot for explaining \cite{mps_blackhole}, and Sven Meinhardt, Jacopo Stoppa and Thorsten Weist for helpful discussions. 

\section{Notation and preliminaries}
\label{prelim}

\subsection{Motivic invariants}
In this section we assume that $\bk=\bC$. Let $K_0(\Var_\bk)$ be the group generated by isomorphism classes $[X]$ of algebraic varieties $X$ over \bk, subject to the relation $[X]=[Y]+[X\ms Y]$ for any closed subvariety $Y\sb X$. Let $\bL=[\bA^1]$ and let $\cV=K_0(\Var_\bk)\ts_{\bZ[\bL]}\bQ(\bL^\oh)$.
Sometimes we will denote $\bL$ by $q$.
For any smooth connected algebraic variety $X$ we define
\begin{equation}
[X]_\vir=q^{-\oh\dim X}[X]\in\cV.
\end{equation}
In particular, we define
\begin{equation}
\gm=[\Gm]_\vir=q^{-\oh}(q-1)=\gme.
\end{equation}

\begin{remark}
Given a smooth projective variety $X$ over \bk, we define its Poincar\'e polynomial $P(X)\in\bZ[q^{\oh}]$ by
\begin{equation}
P(X)=\sum_{k\ge0}q^{\frac k2}\dim H^k(X,\bQ).
\end{equation}
This map can be uniquely extended to a map $P:\cV\to\bQ(q^\oh)$ with $P(\bL^\oh)=q^{\oh}$, called the virtual Poincar\'e polynomial. Note that for any smooth projective variety $X$ the function
$$P([X]_\vir)=\sum_{k\ge0}q^{\oh(k-\dim X)}\dim H^k(X,\bQ)$$
is invariant under the change of variables $q^\oh\mto q^{-\oh}$ by Poincar\'e duality. 
\end{remark}

\subsection{Partitions}
Given a commutative monoid $S$ with identity element $0\in S$, let $S^*=S\ms\set0$. Given a set $X$ and a commutative monoid $S$, let $\cP(X,S)$ be the set of functions $f:X\to S$ with finite support, \ie functions such that $f\inv(S^*)$ is finite. We will denote $\cP(X,\bN)$ by $\cP(X)$. 
Given $f\in\cP(X)$, we define $f!=\prod_{x\in X}f(x)!$.
Note that $\cP(\bN^*)$ can be identified with the set of partitions \cite[\S1]{macdonald_symmetric}, as we can associate with any $m\in\cP(\bN^*)$ the partition $(1^{m_1},2^{m_2},\dots)$ having weight $\sum_{i\ge1}im_i$.
We define maps
\begin{equation}
\nn-:\cP(S^*)\to S,\qquad m\mto\sum_{s\in S}m(s)s
\end{equation}
and
\begin{equation}
\n-:\cP(X,S)\to S,\qquad f\mto\sum_{x\in X}f(x).
\end{equation}
For any $f\in\cP(X,S)$ define the multiplicity function $m_f\in\cP(S^*)$ by
$$S^*\ni s\mto\#f\inv(s).$$
Then $\n f=\nn{m_f}$. 

\subsection{Quivers}
Let $Q$ be a quiver (possibly infinite). We define the group of dimension vectors $\Ga(Q)=\cP(Q_0,\bZ)$ to be the group of maps $Q_0\to\bZ$ with finite support. Let $\Ga_+(Q)=\cP(Q_0)\sb\Ga(Q)$ be the monoid of maps $Q_0\to\bN$ with finite support and let $\Ga_+^*(Q)=\Ga_+(Q)\ms\set0$. For any vertex $i\in Q_0$ we denote also by $i\in\Ga_+(Q)$ the corresponding dimension vector $Q_0\ni j\mto\de_{ij}\in\bN$.

Define a bilinear form $r=r_Q$ on $\Ga(Q)$ by
\begin{equation}
r(\al,\be)=\sum_{(a:i\to j)\in Q_1}\al_i\be_j,
\label{eq:r}
\end{equation}
\ie for any $i,j\in Q_0$ the value $r(i,j)$ is the number of arrows from $i$ to $j$. Define the Euler-Ringel bilinear form $\hi=\hi_Q$ on $\Ga(Q)$ by
\begin{equation}
\hi(\al,\be)=\sum_{i\in Q_0}\al_i\be_i-\sum_{(a:i\to j)\in Q_1}\al_i\be_j=\al\cdot\be-r(\al,\be).
\end{equation} 
Finally, define a skew-symmetric form $\ang{\cdot,\cdot}$ on $\Ga(Q)$ by
\begin{equation}
\ang{\al,\be}=\hi(\al,\be)-\hi(\be,\al)=r(\be,\al)-r(\al,\be).
\end{equation}

\subsection{Stability functions}
A central charge (or stability function) on a quiver $Q$ is a group homomorphism $Z:\Ga(Q)\to\bC$ such that
$Z(j)\in\bH_+$ for $j\in Q_0$, where
\begin{equation}
\bH_+=\sets{re^{i\pi\vi}}{r>0,0<\vi\le 1}.
\end{equation}
There exist group homomorphisms $d,r:\Ga(Q)\to\bR$ such that
$Z(\al)=-d(\al)+ir(\al)$ for $\al\in\Ga(Q)$. Define the slope function $\mu_Z:\Ga_+^*(Q)\to\bR\cup\set\infty$ by the rule
$$\mu_Z(\al)=\frac{d(\al)}{r(\al)},\qquad \al\in \Ga(Q).$$
Define a total preorder $\preceq_Z$ on $\Ga_+^*(Q)$ by the rule $\al\preceq\be$ if $\mu_Z(\al)\le\mu_Z(\be)$. 
We will write $\al\prec_Z\be$ if $\al\preceq_Z\be$ but $\be\not\preceq_Z\al$, \ie if $\mu_Z(\al)<\mu_Z(\be)$. We define an equivalence relation $\sim_Z$ on $\Ga_+^*(Q)$ by $\al\sim_Z\be$ if $\mu_Z(\al)=\mu_Z(\be)$. The stability function $Z$ is called trivial if $\al\sim_Z\be$ for any $\al,\be\in\Ga_+^*(Q)$.

\subsection{Semistable representations}
For a representation $M$ of a quiver $Q$ we define its dimension vector $\udim M\in\Ga_+(Q)$ by $Q_0\ni i\mto\dim M_i$.
A representation $M$ is called stable (resp.\ semistable) if for any proper nonzero subrepresentation $N\sb M$ we have $\udim N\prec_Z\udim M$ (resp.\ $\udim N\preceq_Z\udim M$). For any $\al\in\Ga_+(Q)$, let $R_Z(Q,\al)$ be the subset of $Z$-semi-stable points in the space $R(Q,\al)=\bop_{a:i\to j}\Hom(\bk^{\al_i},\bk^{\al_j})$ of representations of $Q$ having dimension vector \al. It is endowed with an action of the group $\GL_\al(\bk)=\prod_{i\in Q_0}\GL_{\al_i}(\bk)$. We define the moduli space $M_Z(Q,\al)$ as the GIT quotient $R_Z(Q,\al)\GIT\GL_\al(\bk)$.
This moduli space is smooth if $Z$ is \al-generic, \ie every semistable representation having dimension vector \al is stable. 

For any ray $l\sb\bH_+$, we define the generating function
\begin{equation}
A_{Z,l}=1+\sum_{Z(\al)\in l}\frac{[R_Z(Q,\al)]_\vir}{[\GL_\al]_\vir}x^\al.
\end{equation}

Define the quantum torus $\bT_Q$ of the quiver $Q$ to be the algebra $\cV\pser{x_i,i\in Q_0}$ with the twisted multiplication
\begin{equation}
x^\al\circ x^{\be}=q^{\oh\ang{\al,\be}}x^{\al+\be}.
\end{equation}

The Harder-Narasimhan recursion formula \cite{reineke_harder-narasimhan} says that
\begin{equation}
\prod^{\curvearrowright}_l A_{Z,l}=\sum_{\al\in\Ga_+(Q)}\frac{[R(Q,\al)]_\vir}{[\GL_\al]_\vir}x^\al,
\label{eq:HN}
\end{equation}
where the product is taken over the rays $l\sb\bH_+$ in clockwise order.

\subsection{Purity and circle compact actions}
Following \cite{behrend_motivic} we say that an action of $\Gm$ on a variety $X$ is circle compact if the fixed point set $X^{\Gm}$ is proper and for any $x\in X$ the limit $\lim_{t\to 0}t\cdot x$ exists.

\begin{proposition}
Let $X,Y$ be varieties with an action of \Gm and let $f:X\to Y$ be a proper \Gm-equivariant morphism. If the action of \Gm on $Y$ is circle compact then the action of \Gm on $X$ is circle compact.
\end{proposition}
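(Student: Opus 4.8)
The plan is to verify directly the two defining conditions of circle compactness for the $\Gm$-action on $X$: that the fixed locus $X^{\Gm}$ is proper, and that $\lim_{t\to 0}t\cdot x$ exists for every $x\in X$. Neither uses anything beyond properness of $f$ and the two hypotheses on $Y$.

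For the fixed-point condition, I would use that $f$, being $\Gm$-equivariant, carries $X^{\Gm}$ into $Y^{\Gm}$, so the restriction of $f$ to the fixed locus factors as
\[
X^{\Gm}\hookrightarrow f\inv(Y^{\Gm})\arr^{f'}Y^{\Gm},
\]
where $f'$ is the base change of $f$ along the closed immersion $Y^{\Gm}\hookrightarrow Y$ and is therefore proper, while the first arrow is a closed immersion because $X^{\Gm}$ is closed in $X$ and is contained in $f\inv(Y^{\Gm})$. Hence $X^{\Gm}\to Y^{\Gm}$ is proper, and composing it with the structure morphism of the proper variety $Y^{\Gm}$ shows that $X^{\Gm}$ is proper.

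For the limit condition, fix $x\in X$ and let $\rho\colon\Gm\to X$, $t\mto t\cdot x$, be the orbit morphism. By $\Gm$-equivariance the orbit morphism of $f(x)$ is $f\circ\rho$, and circle compactness of $Y$ says it extends to some $\bar\sigma\colon\bA^1\to Y$. I would then apply the valuative criterion of properness to the proper morphism $f$, taking the discrete valuation ring $R=\mathcal O_{\bA^1,0}$ with fraction field $\bk(t)$: the square whose top arrow is $\rho$ restricted to the generic point $\operatorname{Spec}\bk(t)$, whose bottom arrow is $\bar\sigma$ restricted to $\operatorname{Spec} R$, and whose right arrow is $f$, commutes, because both composites $\operatorname{Spec}\bk(t)\to Y$ are the restriction of $f\circ\rho$ to the generic point of $\bA^1$; it therefore admits a unique diagonal lift $\operatorname{Spec} R\to X$. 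Since $X$ is of finite type, this lift spreads out to a morphism from some open neighbourhood $U\ni 0$ in $\bA^1$; on $U\cap\Gm$ it coincides with $\rho$, since the two agree at the generic point of $\bA^1$ and $X$ is separated, and so it glues with $\rho$ to a morphism $\bA^1\to X$ prolonging $\rho$. This exhibits $\lim_{t\to 0}t\cdot x$.

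The only point needing genuine care is the last step: converting the $R$-point of $X$ supplied by the valuative criterion into an honest morphism defined on an open subset of $\bA^1$ and gluing it with $\rho$. This is a standard spreading-out argument, and everything else is formal, using only that properness is preserved under base change and composition; I therefore do not anticipate a real obstacle.
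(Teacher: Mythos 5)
Your proposal is correct and follows essentially the same route as the paper's proof: properness of $X^{\Gm}$ via its closed embedding into the proper preimage $f\inv(Y^{\Gm})$, and existence of limits via the valuative criterion applied to $f$ after extending the orbit map of $f(x)$. The only difference is cosmetic — you spell out the spreading-out/gluing step that converts the $\operatorname{Spec}R$-lift into an honest morphism $\bA^1\to X$, which the paper compresses into a single sentence.
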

\begin{proof}
The variety $f\inv(Y^\Gm)$ is proper as $Y^\Gm$ is proper. 
The variety $X^{\Gm}$ is closed in $f\inv(Y^\Gm)$ and therefore is proper. For $x\in X$, by assumption, the map $\Gm\to Y$, $t\mto t\cdot f(x)$ can be extended to $\bA^1\to Y$. By the valuation criterion of properness of $f:X\to Y$, we can lift $\bA^1\to Y$ to a map $\bA^1\to X$ extending the map $\Gm\to X$, $t\mto t\cdot x$.
\end{proof}

\begin{proposition}[c.f.\ {\cite[Prop.~A.2]{crawley-boevey_absolutely}}]
Let $X$ be a smooth quasi-projective variety with a circle compact action of \Gm. Then the mixed Hodge structure on the cohomology of $X$ is pure and the virtual Poincar\'e polynomial of $X$ equals $$P(X,q^\oh)=\sum_{n\ge0}q^{\frac n2}\dim H^n_c(X,\bQ).$$
In particular, if $X$ is polynomial-count (see \cite[Section 6]{hausel_mixed}) with counting polynomial $P_X\in\bZ[q]$, then the odd cohomologies of $X$ vanish and $P_X\in\bN[q]$.
\end{proposition}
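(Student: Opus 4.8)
The plan is to deduce the statement from the classical purity of the cohomology of smooth projective varieties by means of the Bialynicki--Birula decomposition associated to the $\Gm$-action, the key point being that every stratum of this decomposition is an affine bundle over a connected component of the fixed locus. First I would note that, $X$ being smooth, the fixed-point locus $X^\Gm$ is smooth; being closed in the quasi-projective variety $X$ it is quasi-projective, and it is proper by assumption, so each of its connected components $F_\al$ is smooth and projective, whence $H^k(F_\al,\bQ)$ is pure of weight $k$. Since $\lim_{t\to0}t\cdot x$ exists for every $x\in X$, the Bialynicki--Birula theorem yields a decomposition $X=\bigsqcup_\al X_\al^+$ into smooth, locally closed, $\Gm$-invariant subvarieties, where $X_\al^+$ is the set of points whose limit at $0$ lies in $F_\al$, together with a Zariski-locally trivial affine bundle $\pi_\al\colon X_\al^+\to F_\al$ of rank $d_\al=\dim X_\al^+-\dim F_\al$. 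In particular $[X_\al^+]=q^{d_\al}[F_\al]$ in $K_0(\Var_\bk)$, so $[X]=\sum_\al q^{d_\al}[F_\al]$; and since $F_\al$ is proper there is a natural isomorphism of mixed Hodge structures $H^k_c(X_\al^+,\bQ)\cong H^{k-2d_\al}(F_\al,\bQ)(-d_\al)$, which is pure of weight $k$.

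To globalise this I would filter $X$ by closed subvarieties. Among the finitely many strata, choose one, $X_{\al_0}^+$, whose closure has minimal dimension: if its closure strictly contained it, the boundary $\ub{X_{\al_0}^+}\ms X_{\al_0}^+$ would be a nonempty closed set containing some stratum of strictly smaller closure-dimension, a contradiction; hence $X_{\al_0}^+$ is closed in $X$. Its open complement $U=X\ms X_{\al_0}^+$ is a smooth quasi-projective variety with a $\Gm$-action for which all limits still exist and whose fixed locus $U^\Gm=X^\Gm\ms F_{\al_0}$ is still proper, and it has one stratum fewer. Arguing by induction on the number of strata (the base case being immediate), I may assume $H^k_c(U,\bQ)$ is pure of weight $k$ for all $k$. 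In the long exact sequence
\[
\cdots\to H^k_c(U,\bQ)\to H^k_c(X,\bQ)\to H^k_c(X_{\al_0}^+,\bQ)\to H^{k+1}_c(U,\bQ)\to\cdots
\]
of compactly supported cohomology, all maps are morphisms of mixed Hodge structures; the connecting map goes from a structure pure of weight $k$ to one pure of weight $k+1$, and therefore vanishes by strictness. Hence the sequence breaks into short exact sequences $0\to H^k_c(U,\bQ)\to H^k_c(X,\bQ)\to H^k_c(X_{\al_0}^+,\bQ)\to0$ of structures pure of weight $k$, so $H^k_c(X,\bQ)$ is pure of weight $k$.

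Purity of $H^k(X,\bQ)$ then follows by Poincar\'e duality for the smooth variety $X$. The identity $P(X)=\sum_nq^{n/2}\dim H^n_c(X,\bQ)$ follows by applying the additive, multiplicative invariant $P$ to $[X]=\sum_\al q^{d_\al}[F_\al]$, using $P(F_\al)=\sum_kq^{k/2}\dim H^k(F_\al,\bQ)$, and comparing with the identity $\dim H^n_c(X,\bQ)=\sum_\al\dim H^{n-2d_\al}(F_\al,\bQ)$ supplied by the short exact sequences above. Finally, if $X$ is polynomial-count with counting polynomial $P_X\in\bZ[q]$, then $P(X)=P_X(q)$ by the theorem of Katz (\cite[Section 6]{hausel_mixed}); comparing this with the formula just established forces every half-integral power of $q$ to occur with coefficient zero, so $H^n_c(X,\bQ)=0$ for odd $n$ (hence $H^n(X,\bQ)=0$ as well), and $P_X=\sum_kq^k\dim H^{2k}_c(X,\bQ)\in\bN[q]$.

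I expect the only real subtlety to lie in the Bialynicki--Birula input for the possibly non-complete variety $X$: one has to make sure the decomposition into affine bundles over the $F_\al$ is available and Zariski-locally trivial in the quasi-projective setting, and that the hypotheses --- that all limits exist and that the fixed locus is proper --- genuinely descend to the open complement $U$ at each step of the induction. Once this bookkeeping is secured, the propagation of purity through the long exact sequences is forced purely formally by the strictness of morphisms of mixed Hodge structures.
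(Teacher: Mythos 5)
Your proposal follows the same route as the paper's proof: $X^\Gm$ is smooth and projective, the Bialynicki--Birula decomposition exhibits $X$ as a union of affine bundles over the fixed components, the decomposition is filtrable, and purity propagates through the resulting (co)homology exact sequences; your subsequent deduction of the Poincar\'e-polynomial identity and the polynomial-count consequence is also exactly what the paper intends, just spelled out. The one caveat is in your step producing a closed stratum: asserting that the boundary $\ub{X_{\al_0}^+}\ms X_{\al_0}^+$ contains some whole stratum tacitly uses that the closure of a Bialynicki--Birula stratum is a union of strata, which is clear for projective $X$ but for quasi-projective $X$ is precisely the filtrability fact the paper quotes from \cite[Prop.~A.2]{crawley-boevey_absolutely}; it requires an equivariant compactification (or the argument of loc.\ cit.) and is the subtlety you yourself flag at the end, so this is a matter of attribution rather than a flaw in the overall strategy.
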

\begin{proof}
The variety $X^{\Gm}$ is smooth and projective.
Let $X^\Gm=\bigcup_iF_i$ be a decomposition into connected components. Consider the Bialynicki-Birula decomposition $X=\bigcup_i X_i$, where $X_i=\sets{x\in X}{\lim_{t\to0}t\cdot x\in F_i}$. 
This decomposition is filtrable (as $X$ is quasi-projective, c.f.\ \cite[Prop.~A.2]{crawley-boevey_absolutely}).
The natural projection $X_i\to F_i$ is an affine fibration. Every $F_i$ has pure Hodge structure, therefore the same is true for $X_i$ and therefore also for $X$.

If $X$ is polynomial-count with counting polynomial $P_X$ then $P(X,q^\oh)=P_X(q)\in\bZ[q]$ by \cite{hausel_mixed}.
This implies that $H^n(X,\bQ)=0$ for odd $n$ and $P_X\in\bN[q]$.
\end{proof}

\begin{proposition}[c.f.\ {\cite[Section 2.2]{engel_smooth}}]
\label{purity}
Let $Q$ be a quiver with a stability function $Z$. Let $\al\in\Ga_+(Q)$ be a dimension vector such that $Z$ is \al-generic (\ie any $Z$-semistable $Q$-representation of dimension vector \al is stable). Then the moduli space $M_Z(Q,\al)$ has a circle compact action of \Gm, it is polynomial-count with counting polynomial $P(q)$ in $\bN[q]$, and its motive equals $P(\bL)$.
\end{proposition}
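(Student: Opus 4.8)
The plan is to construct the $\Gm$-action on $M_Z(Q,\al)$ as in \cite{engel_smooth}, deduce circle compactness and purity from the two preceding propositions, and get polynomial-count from the resolution of the Harder--Narasimhan recursion in \cite{reineke_harder-narasimhan}. Since $R(Q,\al)=\bigoplus_{a:i\to j}\Hom(\bk^{\al_i},\bk^{\al_j})$ and the $\GL_\al$-action only involve vertices in $\supp\al$, we may replace $Q$ by its full subquiver on $\supp\al$ and assume $Q$ finite. Fix a function $\xi:Q_1\to\bZ_{>0}$ and let $\Gm$ act on $R(Q,\al)$ by $t\cdot(M_a)_a=(t^{\xi(a)}M_a)_a$. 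This action is linear, commutes with $\GL_\al$, and preserves $R_Z(Q,\al)$, because a subrepresentation of $t\cdot M$ is the same collection of subspaces as a subrepresentation of $M$ and has the same dimension vector, so (semi)stability is unchanged; hence it descends to a $\Gm$-action on the GIT quotient $M_Z(Q,\al)$.

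For circle compactness, recall that by definition $M_Z(Q,\al)$ is the GIT quotient of $R_Z(Q,\al)$ by $\GL_\al$ for the linearization given by an integral form $\te$ representing $Z$ on the slope of $\al$, so $M_Z(Q,\al)=\operatorname{Proj}\bigoplus_{n\ge0}\bk[R(Q,\al)]^{\GL_\al}_{n\te}$ and the structure morphism $\pi:M_Z(Q,\al)\to N:=\operatorname{Spec}\bk[R(Q,\al)]^{\GL_\al}$ is projective and $\Gm$-equivariant. Since every coordinate of $R(Q,\al)$ has strictly positive $\xi$-weight, $\bk[R(Q,\al)]^{\GL_\al}$ is a finitely generated graded $\bk$-algebra with degree-zero part $\bk$, so $N$ is an affine cone on which $\Gm$ acts with the vertex as unique fixed point and for which $\lim_{t\to0}t\cdot n$ exists for all $n\in N$; i.e. the action on $N$ is circle compact. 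By the first of the preceding propositions, applied to the proper equivariant $\pi$, the $\Gm$-action on $M_Z(Q,\al)$ is circle compact. As $Z$ is \al-generic, $M_Z(Q,\al)$ is smooth and quasi-projective, so the second preceding proposition gives that its mixed Hodge structure is pure and $P(M_Z(Q,\al),q^\oh)=\sum_{n}q^{n/2}\dim H^n_c(M_Z(Q,\al),\bQ)$.

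It remains to see that $M_Z(Q,\al)$ is polynomial-count with counting polynomial in $\bN[q]$ and motive $P(\bL)$. The coefficients in \eqref{eq:HN} are rational functions of $q^\oh$, so solving it for $A_{Z,l}$ expresses $[R_Z(Q,\al)]_\vir/[\GL_\al]_\vir$, hence the groupoid count $\#[R_Z(Q,\al)/\GL_\al](\bF_q)$, as a fixed rational function of $q$. Under \al-genericity every semistable representation of dimension $\al$ is stable with automorphism group $\Gm$, and since the Brauer group of a finite field vanishes the $\Gm$-gerbe $[R_Z(Q,\al)/\GL_\al]\to M_Z(Q,\al)$ splits over finite residue fields, so $\#M_Z(Q,\al)(\bF_q)=(q-1)\cdot\#[R_Z(Q,\al)/\GL_\al](\bF_q)$; this is a rational function of $q$ with non-negative integer values at all prime powers, hence a polynomial, and then by the second preceding proposition the odd cohomology vanishes and the counting polynomial lies in $\bN[q]$. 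For the motive I would induct on $\dim M_Z(Q,\al)$: choosing $\xi$ generic as in \cite{engel_smooth}, the $\Gm$-fixed locus is a disjoint union of smooth projective quiver moduli spaces of strictly smaller type (the components parametrise $\Gm$-graded representations, i.e. representations of an acyclic covering quiver); by \cite[Prop.~A.2]{crawley-boevey_absolutely} the Bialynicki--Birula strata $X_i$ are Zariski-locally trivial affine bundles over the fixed components $F_i$, whence $[M_Z(Q,\al)]=\sum_i\bL^{\dim X_i-\dim F_i}[F_i]=\sum_i\bL^{\dim X_i-\dim F_i}P_{F_i}(\bL)=P(\bL)$ by the inductive hypothesis.

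The main obstacle is this last step. One must arrange the induction to close: identify the induced stability on the covering quiver and check it is again generic, and choose $\xi$ so that the fixed components are genuinely smaller — the components on which $\Gm$ acts trivially are already fixed and do not drop in dimension, so the acyclic case needs separate treatment (by iterating the construction, or a direct cellular argument) — and one also needs the Bialynicki--Birula strata of this merely quasi-projective variety to be Zariski-locally trivial so that the decomposition holds in $K_0(\Var_\bk)$ and not only after applying the virtual Poincar\'e polynomial. All of this can be bypassed by quoting from \cite{reineke_harder-narasimhan} that smooth quiver moduli are of Tate type and polynomial-count, in which case only circle compactness and purity need the argument above.
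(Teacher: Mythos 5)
Your circle-compactness argument is the same as the paper's: scale the arrows, observe the structure morphism of the GIT quotient is projective and $\Gm$-equivariant onto the affine quotient $N=\operatorname{Spec}\bk[R(Q,\al)]^{\GL_\al}$, which is an affine cone with a contracting action, and then invoke the two preceding propositions. (The paper just takes $\xi\equiv 1$; introducing a general positive weight $\xi$ buys nothing here, though it is of course what \cite{engel_smooth} uses later to make the fixed locus manageable.)

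Where you diverge is in trying to actually \emph{prove} the polynomial-count and motive assertions rather than quote them. The paper simply says ``we know that $M_Z(Q,\al)$ is polynomial-count'' and derives the motive claim from ``the motivic nature of the Harder--Narasimhan relation'' (citing \cite[Section 3.2]{reineke_mps}); both are pre-existing facts. Your attempted from-scratch argument has two real gaps. First, ``a rational function of $q$ taking non-negative integer values at prime powers is a polynomial'' only yields a polynomial with \emph{rational} coefficients (cf.\ $\binom{q}{2}$); polynomial-count in the sense of \cite{hausel_mixed} requires $P_X\in\bZ[q]$, which you do not establish by this counting argument alone. Second, as you yourself point out, the Bialynicki--Birula induction for the motive requires Zariski-local triviality of the strata and a closing of the induction over the covering quivers, neither of which you carry out. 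Since you then explicitly propose to ``bypass'' all of this by quoting \cite{reineke_harder-narasimhan} that smooth quiver moduli are polynomial-count and of Tate type, you end up exactly where the paper is; the detour through the counting argument and the BB induction is instructive but not self-contained, and should not be presented as part of the proof unless the identified gaps are filled.
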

\begin{proof}
Let $M_0(Q,\al)=R(Q,\al)\GIT\GL_\al$. Then the inclusion $R_Z(Q,\al)\to R(Q,\al)$ induces a projective map $\pi:M_Z(Q,\al)\to M_0(Q,\al)$.
Consider the action of $\Gm$ on $R(Q,\al)$ given by
$$t\cdot (M_a)_{a\in Q_1}=(tM_a)_{a\in Q_1}.$$
It induces an action of \Gm on $M_Z(Q,\al)$ and $M_0(Q,\al)$ such that $\pi$ is \Gm-equivariant. The action of \Gm on $M_0(Q,\al)$ is circle compact (the set of \Gm-invariant points consists of the zero representation). By the previous proposition the action of \Gm on $M_Z(Q,\al)$ is circle compact. We know that $M_Z(Q,\al)$ is polynomial-count. Therefore, by the previous proposition its counting polynomial has non-negative coefficients. Moreover, it follows from the motivic nature of the Harder-Narasimhan relation (discussed e.g. in \cite[Section 3.2]{reineke_mps}) that if $Z$ is \al-generic then the motive of $M_Z(Q,\al)$ equals $P(\bL)$.
\end{proof}

\section{Abelian wall-crossing formula}
\label{sec:abelian}
We say that a representation $M$ of the quiver $Q$ is abelian (or thin sincere)
if $\dim M_i=1$ for any $i\in Q$. Let $\one=\one_{Q_0}\in\Ga(Q)$ be the corresponding dimension vector, with $\one_i=1$ for $i\in Q_0$. Note that $\hi_Q(\one,\one)=\n{Q_0}-\n{Q_1}$.
The space of abelian representations $R(Q)=R(Q,\one)=\bk^{Q_1}$ is endowed with an action of the group $G(Q)=\GL_{\one}=(\bk^*)^{Q_0}$.
Given a stability function $Z:\Ga(Q)\to\bC$, let $R_Z(Q)\sb R(Q)$ be the subspace of abelian $Z$-semi-stable representations. We define
\begin{equation}
f_Z(Q)=\frac{[R_Z(Q)]_\vir}{[G(Q)]_\vir}
=q^{\oh(\n{Q_0}-\n{Q_1})}\frac{[R_Z(Q)]}{(q-1)^{\n{Q_0}}}.
\end{equation}

\begin{remark}
For a trivial stability all representations are semistable. Therefore
$$f_{\triv}(Q)
=q^{\oh(\n{Q_0}-\n{Q_1})}\frac{q^{\n{Q_1}}}{(q-1)^{\n{Q_0}}}
=\frac{q^{\oh\n{Q_1}}}{(\gme)^{\n{Q_0}}}.$$
\end{remark}

Given a dimension vector $\al\in\Ga_+(Q)$, we define a new quiver $Q(\al)$ with vertices $i_k$, where $i\in Q_0$ and $1\le k\le \al_i$. The number of arrows from $i_k$ to $j_{k'}$ is defined to be the number of arrows from $i$ to $j$. Then
\begin{equation}
\n{Q(\al)_0}=\n\al,\qquad \n{Q(\al)_1}=\sum_{(a:i\to j)\in Q_1}\al_i\al_j=r(\al,\al).
\end{equation}
Any stability function $Z:\Ga(Q)\to\bC$ on $Q$ induces a stability function on $Q(\al)$ by $Z(i_k)=Z(i)$. We define $f_Z(\al)=f_Z(Q(\al))$.
In particular, for a trivial stability, we have
\begin{equation}
f_{\triv}(\al)=\frac{q^{\oh r(\al,\al)}}{(\gme)^{\n\al}}.
\label{eq:trivial stab}
\end{equation}

\begin{remark}
Consider the group homomorphism
$$\pi:\Ga(Q(\al))\to\Ga(Q),\qquad i_k\mto i.$$
It maps the dimension vector $\one_{Q(\al)}\in\Ga(Q(\al))$ to $\al\in\Ga(Q)$.
The map $\pi$ preserves the skew-symmetric form. This implies that it induces an algebra homomorphism of the corresponding quantum tori.
For any $I\sb Q(\al)$ we define
$$\pi(I)=\pi(\one_I)=\sum_{i_k\in I}i\in\Ga(Q).$$

\end{remark}

\begin{theorem}
\label{th:wall-cross}
We have
\begin{equation}
\prod_{l}^\curvearrowright\bigg(1+\sum_{Z(\al)\in l}f_Z(\al)\frac{x^\al}{\al!}\bigg)
=\sum_{\al\in\Ga_+(Q)}f_{\triv}(\al)\frac{x^\al}{\al!}.
\label{eq:wall-cross}
\end{equation}
In particular, the product on the left is independent of the stability function $Z$.
\end{theorem}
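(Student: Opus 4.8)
The plan is to deduce the abelian wall-crossing formula \eqref{eq:wall-cross} from the usual Harder--Narasimhan recursion \eqref{eq:HN} by applying it not to $Q$ itself but to the ``blow-up'' quivers $Q(\al)$, and then to reassemble the resulting identities using the homomorphisms $\pi:\Ga(Q(\al))\to\Ga(Q)$. The first step is to record the key compatibility: because each arrow of $Q$ from $i$ to $j$ contributes, in $Q(\al)$, exactly one arrow from $i_k$ to $j_{k'}$ for each pair $(k,k')$, the form $r$ transports correctly under $\pi$ and hence so do $\hi$ and $\ang{\cdot,\cdot}$; this was already noted in the remark preceding the theorem and it means $\pi$ induces an algebra homomorphism of quantum tori $\bT_{Q(\al)}\to\bT_Q$ sending $x^{\be}\mapsto x^{\pi(\be)}$. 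Moreover a sub-dimension-vector $\be\preceq\one_{Q(\al)}$ is exactly a subset $I\sb Q(\al)_0$, and the stability function $Z$ on $Q(\al)$ was defined precisely by pullback ($Z(i_k)=Z(i)$), so $\mu_Z(\one_I)$ depends only on $\pi(I)\in\Ga_+(Q)$.

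The second step is to apply \eqref{eq:HN} to the quiver $Q(\al)$ at its abelian dimension vector $\one=\one_{Q(\al)}$, i.e.\ to restrict the HN identity for $Q(\al)$ to the ``thin'' part. On the right-hand side this picks out the term $\frac{[R(Q(\al))]_\vir}{[G(Q(\al))]_\vir}x^{\one}=f_{\triv}(Q(\al))\,x^{\one}=f_{\triv}(\al)\,x^{\one}$, using \eqref{eq:trivial stab}. On the left-hand side, expanding each factor $A_{Z,l}$ of the ordered product over rays $l\sb\bH_+$ and keeping only monomials $x^{\be}$ with $\be\preceq\one$ (equivalently, $\be=\one_I$ for a subset $I$), one gets a sum over ordered decompositions $Q(\al)_0=I_1\sqcup\cdots\sqcup I_s$ with the slopes of $\pi(I_t)$ strictly decreasing, the $t$-th factor contributing $f_Z(Q(\al)|_{I_t})=f_Z(\pi(I_t))$ twisted by the quantum-torus product. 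So the HN identity for $Q(\al)$, restricted to the thin part, reads:\ the thin part of $\prod_l^{\curvearrowright} A_{Z,l}(Q(\al))$ equals $f_{\triv}(\al)x^{\one}$.

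The third and final step is to push all these identities forward along $\pi$ and sum over $\al$, weighting by $\frac1{\al!}$, to recognize both sides of \eqref{eq:wall-cross}. The combinatorial heart is that summing $\sum_{\al}\frac1{\al!}\sum_{I\sb Q(\al)_0,\ \pi(I)=\be}(\dots)$ collapses the ``labelling'' data: choosing $\al$ and then a subset $I$ of $Q(\al)_0$ with $\pi(I)=\be$, modulo the obvious symmetry, is the same as choosing $\be$ (and remembering that each vertex of $Q(\al)$ outside $I$ still carries a label), which is exactly the bookkeeping that turns the ordered product $\prod_l^{\curvearrowright}\bigl(1+\sum_{Z(\al)\in l}f_Z(\al)\frac{x^\al}{\al!}\bigr)$ into an ordered product over decompositions and matches it term-by-term with $\sum_{\al}f_{\triv}(\al)\frac{x^\al}{\al!}$. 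Concretely I would introduce the two generating series $F_Z=\sum_{\al}f_Z(\al)\frac{x^\al}{\al!}$ (graded by ray) and $F_{\triv}=\sum_\al f_{\triv}(\al)\frac{x^\al}{\al!}$ in $\bT_Q$, observe that $\pi$ identifies $\frac{x^{\one_{Q(\al)}}}{1}$ with $x^\al$ and that the multinomial $\binom{\al}{\al^{(1)},\dots,\al^{(s)}}$ accounts exactly for the number of subsets of $Q(\al)_0$ realizing a given ordered decomposition $\al=\al^{(1)}+\cdots+\al^{(s)}$, so that the $\pi$-pushforward of the thin HN identities, summed with the factorial weights, becomes precisely $\prod_l^{\curvearrowright}\bigl(1+\sum_{Z(\al)\in l}f_Z(\al)\frac{x^\al}{\al!}\bigr)=F_{\triv}$.

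I expect the main obstacle to be making the third step rigorous: one must check that the $\frac1{\al!}$ normalizations interact correctly with the quantum-torus twist $q^{\oh\ang{\cdot,\cdot}}$ under $\pi$ (the forms agree, so this is really just careful bookkeeping, but it is where sign/parallel-vertex subtleties could hide), and that the ordered product over rays on the left-hand side of \eqref{eq:wall-cross} genuinely matches, factor by factor and ray by ray, the expansion of $\prod_l^{\curvearrowright}A_{Z,l}(Q(\al))$ restricted to the thin part and then $\pi$-pushed-forward; in particular one needs that the ray of $\one_I$ in $\bH_+$ for the quiver $Q(\al)$ is the ray of $\pi(I)$ for $Q$, which follows from $Z(i_k)=Z(i)$ but should be stated explicitly. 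Everything else — the purity input, the $r$- and $\ang{\cdot,\cdot}$-compatibility, the evaluation $f_{\triv}(\al)=q^{\oh r(\al,\al)}/(\gme)^{\n\al}$ — is already available in the excerpt.
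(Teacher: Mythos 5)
Your proposal is correct and takes essentially the same route as the paper: both arguments rest on the Harder--Narasimhan stratification of the space of abelian representations of $Q(\al)$, with HN types corresponding to ordered decompositions $Q(\al)_0=I_1\dot\cup\cdots\dot\cup I_s$ into subsets of strictly decreasing slope, and the multinomial coefficient $\binom{\al}{\al^1,\dots,\al^s}$ counting decompositions with $\pi(I_k)=\al^k$. The only difference in presentation is that the paper re-derives the thin case of the HN recursion directly from the filtration, whereas you invoke \eqref{eq:HN} for $Q(\al)$ as a black box and then restrict to the thin dimension vector; these are the same argument, and the bookkeeping concerns you flag (compatibility of $\pi$ with the quantum-torus twist, the ray of $\one_I$ matching that of $\pi(I)$) are indeed the content of the preceding remark and of the definition $Z(i_k)=Z(i)$.
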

\begin{proof}
Given an abelian representation $M$ of $Q(\al)$, there is a unique filtration (the Harder-Narasimhan filtration)
$$0=M_0\sb M_1\sb\dots\sb M_s=M$$
with semistable quotients having decreasing slopes. Each subquotient $M_k/M_{k-1}$ of $M$ is uniquely determined by its support $I_k$, the set of vertices of $Q(\al)_0$ where it is nonzero. We obtain a disjoint decomposition $Q(\al)_0=I_1\dot\cup\dots\dot\cup I_s$, called the HN type of $M$.

Given a partition $\al=\al^1+\dots+\al^s$, the number of ways to decompose $Q(\al)_0=I_1\dot\cup\dots\dot\cup I_s$ so that $\pi(I_k)=\al^k$ equals
$\binom{\al}{\al^1,\dots,\al^s}=\frac{\al!}{\al^1!\dots\al^s!}$.
This implies that, for a fixed $\al\in\Ga_+(Q)$, the expression
$$\sum_{\over{\al^1+\dots+\al^s=\al}{\al^1>_Z\dots>_Z\al^s}}
f_Z(\al^1)x^{\al^1}\circ\dots\circ f_Z(\al^s)x^{\al^s}\frac{\al!}{\al^1!\dots\al^s!}
$$
equals the invariant of the moduli stack of all abelian representations of $Q(\al)$. This proves the theorem.
\end{proof}

\begin{corollary}
\label{cr:abelian solution}
For any stability function $Z$ we have
$$f_Z(\al)=(\gme)^{-\n\al}
\sum_{\over{\al^1+\dots+\al^s=\al}{\al^1+\dots+\al^i>_Z\al}}
(-1)^{n-1}
\binom\al{\al^1,\dots,\al^s}
(q^\oh)^{\sum_{i<j}\ang{\al^i,\al^j}+\sum_i r(\al^i,\al^i)}.$$
\end{corollary}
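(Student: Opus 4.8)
The plan is to derive the formula for $f_Z(\al)$ by inverting the ordered product identity of Theorem \ref{th:wall-cross}. Writing $A_{Z,l} = 1 + \sum_{Z(\al)\in l} f_Z(\al)\frac{x^\al}{\al!}$, the theorem gives $\prod_l^\curvearrowright A_{Z,l} = B$, where $B = \sum_\al f_{\triv}(\al)\frac{x^\al}{\al!}$ is the stability-independent right-hand side. Since $B$ is itself obtained by taking $Z$ trivial (a single ray with all of $\Ga_+^*(Q)$ in it), we have $B = 1 + \sum_\al f_{\triv}(\al)\frac{x^\al}{\al!}$, and so each $A_{Z,l}$ is an invertible element of $\bT_Q$ with constant term $1$. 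First I would fix the ray $l$ and isolate $A_{Z,l}$ as the product over the rays strictly "above" $l$ times $A_{Z,l}$ times the rays strictly "below"; inverting the outer factors and using that $B$ restricted to dimension vectors $\al$ with $Z(\al)\in l$ is just $1 + \sum_{Z(\al)\in l} f_{\triv}(\al)\frac{x^\al}{\al!}$ (slopes being additive along filtrations, cross terms between different rays never contribute to a graded piece supported on a single ray), I would reduce to the single-ray inversion
\[
1+\sum_{Z(\al)\in l} f_Z(\al)\frac{x^\al}{\al!} = \Bigl(1+\sum_{Z(\al)\in l} f_{\triv}(\al)\frac{x^\al}{\al!}\Bigr),
\]
wait — that is not right, since on a single ray the HN filtration is nontrivial. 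The correct reduction is that on the ray $l$ one still has the HN recursion internally, so one must genuinely invert the series.

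So the real work is: extract $f_Z(\al)$ from $B = \prod_l^\curvearrowright A_{Z,l}$ by Möbius-type inversion over ordered set partitions. Concretely, comparing the coefficient of $\frac{x^\al}{\al!}$ on both sides and using the twisted multiplication $x^{\al^i}\circ x^{\al^j} = q^{\oh\ang{\al^i,\al^j}}x^{\al^i+\al^j}$, one gets
\[
f_{\triv}(\al) = \sum_{\substack{\al^1+\dots+\al^s=\al\\ \al^1\ge_Z\dots\ge_Z\al^s}} q^{\oh\sum_{i<j}\ang{\al^i,\al^j}}\binom{\al}{\al^1,\dots,\al^s}\prod_i f_Z(\al^i),
\]
where within each slope class the sum is over \emph{ordered} tuples (this is exactly the statement that the HN stratification of all abelian representations of $Q(\al)$ refines to strata indexed by ordered set partitions $Q(\al)_0 = I_1\dot\cup\dots\dot\cup I_s$ with $\pi(I_k)=\al^k$, the multinomial being the count of such set partitions). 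Inverting this triangular system — triangular with respect to the refinement order on ordered partitions, with invertible diagonal since the constant term is $1$ — yields the alternating sum $\sum (-1)^{n-1}$ over tuples $\al^1+\dots+\al^s=\al$; the inversion of a product-over-composition identity is standard and produces precisely the sign $(-1)^{n-1}$ (here I read $n=s$) together with the constraint coming from the ordered product, which after the reindexing forced by inverting a clockwise product becomes $\al^1+\dots+\al^i >_Z \al$ for all $i<s$. Finally I substitute $f_{\triv}(\be) = q^{\oh r(\be,\be)}/(\gme)^{\n\be}$ from \eqref{eq:trivial stab}: the product $\prod_i f_{\triv}(\al^i)$ contributes $(\gme)^{-\n\al}$ overall (since $\sum_i\n{\al^i}=\n\al$) and $q^{\oh\sum_i r(\al^i,\al^i)}$, while the twist contributes $q^{\oh\sum_{i<j}\ang{\al^i,\al^j}}$, matching the stated exponent $\sum_{i<j}\ang{\al^i,\al^j}+\sum_i r(\al^i,\al^i)$.

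The main obstacle I anticipate is bookkeeping the two orderings correctly: the ordered product over rays is \emph{clockwise} (decreasing slopes) while the combinatorial inversion of a "sum over compositions equals product" identity most naturally produces a condition on \emph{prefix sums}. One must check that inverting $B = \prod_l^\curvearrowright A_{Z,l}$ — equivalently, solving $A_{Z,l}$ in terms of $B$ and the $A_{Z,l'}$ for $l'\ne l$, then iterating — converts "decreasing-slope ordered tuples summing to $\al$" into "tuples $(\al^1,\dots,\al^s)$ with every proper prefix sum $\al^1+\dots+\al^i$ strictly $>_Z\al$", which is the constraint written in the corollary. This is the same manipulation that appears in \cite{reineke_harder-narasimhan} for the non-abelian HN recursion, so I would cite that and verify that the only change is the replacement of $[R(Q,\al)]_\vir/[\GL_\al]_\vir$ by $f_{\triv}(\al)/\al!$ and of $[R_Z(Q,\al)]_\vir/[\GL_\al]_\vir$ by $f_Z(\al)/\al!$ — the twisted-product structure and hence the combinatorics of the inversion being identical. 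The remaining steps (the multinomial count of set partitions, the exponent arithmetic) are routine.
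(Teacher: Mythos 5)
Your proposal follows essentially the same route as the paper's proof: start from Theorem \ref{th:wall-cross}, read off the recursion by expanding the clockwise-ordered product, invert via the composition-inversion formula of \cite[Theorem~5.1]{reineke_harder-narasimhan}, and substitute $f_\triv$ from \eqref{eq:trivial stab}; the paper does exactly this (including reading $n=s$ in the sign). One small slip: in your intermediate identity the constraint should be $\al^1 >_Z \dots >_Z \al^s$ (strictly decreasing slopes), not $\ge_Z$, since each factor of the ordered product contributes from a distinct ray and the HN type of an abelian representation has strictly decreasing slopes; allowing ties would overcount. This does not affect the outcome since the inversion you invoke is for exactly the strict-slope recursion, and the final formula and exponent arithmetic are correct.
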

\begin{proof}
It follows from the theorem that
$$\frac{f_{\triv}(\al)}{\al!}x^\al
=\sum_{\over{\al^1+\dots+\al^s=\al}{\al^1>_Z\dots>_Z\al^s}}
\frac{f_Z(\al^1)}{\al^1!}x^{\al^1}\circ\dots\circ \frac{f_Z(\al^s)}{\al^s!}x^{\al^s}.$$
Applying the formula of \cite[Theorem 5.1]{reineke_harder-narasimhan} for the solutions of such recursions we get 
$$\frac{f_Z(\al)}{\al!}x^\al=\sum_{\over{\al^1+\dots+\al^s=\al}{\al^1+\dots+\al^i>_Z\al}}(-1)^{n-1}
\frac{f_{\triv}(\al^1)}{\al^1!}x^{\al^1}\circ\dots\circ \frac{f_{\triv}(\al^s)}{\al^s!}x^{\al^s}.$$
This, together with equation \eqref{eq:trivial stab}, implies the corollary.
\end{proof}

\begin{remark}
One can obtain this formula also by applying directly \cite{reineke_harder-narasimhan} to the invariants of moduli spaces of semi-stable representations of $Q(\al)$ having dimension vector $\one_{Q(\al)}$.
\end{remark}

It follows from the previous results that, for any $\al\in\Ga_+^*(Q)$, the invariants $f_Z(\al)$ are rational functions in the variable $q^\oh$. 
We are going to define certain polynomial invariants now.

\begin{definition}[Abelian quiver invariants]
\label{df:ab1}
Assume that a ray $l\in\bH_+$ is such that 
\begin{equation}
\text{if}\quad Z(\al),Z(\be)\in l\quad\text{ and }\quad f_Z(\al),f_Z(\be)\ne0\quad\text{ then }\quad\ang{\al,\be}=0.
\label{eq:admiss ray}
\end{equation}
Then we define abelian quiver invariants $g_Z(\al)$ for $\al\in Z\inv(l)\cap \Ga_+(Q)$  by the formula
\begin{equation}
1+\sum_{Z(\al)\in l}f_Z(\al)\frac{x^\al}{\al!}
=\exp\Bigg(\frac{\sum_{Z(\al)\in l}g_Z(\al)\frac{x^\al}{\al!}}{q^\oh-q^{-\oh}}\Bigg).
\label{eq:ab1}
\end{equation}
As one of our main results, we will prove in Section \ref{geometricity} that $g_Z(\al)\in\bN[q^{\pm\oh}]$.
\end{definition}

\section{Combinatorial methods}\label{sec:gesselwang}
We recall a key lemma by Gessel and Wang \cite{gessel_depth} relating connected graphs and trees.

Let $X$ be a finite set with a total ordering, and denote the minimal element by $x_0\in X$. A graph with set of nodes $X$ is encoded as a subset $G$ of the set $\binom{X}{2}$ of two-element subsets of $X$; then there is an edge between nodes $k$ and $l$  if and only if $\{k,l\}\in G$. The number of edges is denoted by $e(G)$. If $G$ is connected, then $e(G)\geq | X|-1$.

Suppose that $T$ is a tree on $X$, that is, a connected graph such that every proper subgraph is non-connected. Then $e(T)=|X|-1$. The tree $T$ induces a partial ordering on $X$ if we view $T$ as rooted in the node $x_0$ as follows: for every $x\in X$, there exists a unique path from $x_0$ to $x$, say $x_0-x_1-\ldots-x_k=x$. Then $y\trianglelefteq x$ if and only if the path from $x_0$ to $x$ passes through $y$, that is, if $y=x_i$ for some $0\leq i\leq k$. In particular, we denote by $p(x)=x_{k-1}$ the immediate predecessor of an element $x\in X\setminus\{x_0\}$.

Define the inversion set $I(T)$ of $T$ as the set of all pairs $(x,y)\in X^2$ such that $x\triangleleft y$, but $x>y$.

\begin{lemma}\label{le:gw} There is a natural bijection between connected graphs on $X$ and pairs $(T,J)$ consisting of a tree on $X$ and a subset $J$ of $I(T)$.
\end{lemma}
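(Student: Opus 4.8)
The plan is to construct the bijection of Lemma~\ref{le:gw} explicitly, in both directions, and then check that the two constructions are mutually inverse. This is essentially the classical Gessel--Wang argument, organised around the rooted structure induced by the distinguished minimal element $x_0\in X$.

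\emph{From pairs to graphs.} Given a tree $T$ on $X$ and a subset $J\sb I(T)$, I would set $G(T,J)=T\cup J$, viewing $J\sb I(T)\sb\binom X2$ (each inversion $(x,y)$ with $x\triangleleft y$, $x>y$ contributes the edge $\{x,y\}$). The first thing to verify is that $G(T,J)$ is connected: this is immediate since $T\sb G(T,J)$ and $T$ is already connected, so adding edges preserves connectedness.

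\emph{From graphs to pairs.} This is the substantive direction. Given a connected graph $G$ on $X$, I want to extract a canonical spanning tree $T=T(G)$ so that $G\ms T$ consists only of inversions of $T$. The natural choice is a \emph{breadth-first} or \emph{greedy} spanning tree rooted at $x_0$: process the vertices in increasing order and, for each $x\ne x_0$, let its predecessor $p(x)$ in $T$ be the smallest vertex $y<x$ already reachable in $T$ with $\{x,y\}\in G$; equivalently, build $T$ by scanning edges $\{x,y\}$ of $G$ with $x<y$ in order of increasing $y$ (breaking ties by increasing $x$) and keeping an edge precisely when it connects two as-yet-disconnected components. Connectedness of $G$ guarantees that this produces a spanning tree. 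The key claim is then that every edge of $G\ms T$ lies in $I(T)$. Suppose $\{a,b\}\in G\ms T$ with $a<b$; I must show that in the rooted tree $T$ one has $a\triangleleft b$, i.e.\ $a$ lies on the $x_0$--$b$ path. If not, then at the moment the greedy algorithm was choosing the tree-edge down from $b$ (the edge $\{p(b),b\}$), the vertex $a$ was already in $T$, hence in some component, and $\{a,b\}$ was an available edge with $a<b$; the only way $\{p(b),b\}$ rather than $\{a,b\}$ got chosen is if $a$ and $p(b)$ were already in the same $T$-component, which forces $a$ to be an ancestor of $b$ after the edge $\{p(b),b\}$ is added --- this is the step I expect to need the most care, and it is cleanest to phrase it as: the tree-predecessor $p(b)$ is the minimal $T$-neighbour of $b$ among smaller vertices, and any other smaller neighbour $a$ of $b$ in $G$ must already be joined to $p(b)$ in $T$ before the edge to $b$ is added, so $a$ is an ancestor of $b$; since $a<b$, the pair $(a,b)$ is an inversion. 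Setting $J(G)=G\ms T(G)$ then gives a pair $(T(G),J(G))$ with $J(G)\sb I(T(G))$.

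\emph{Mutual inverseness.} Finally I would check the two maps are inverse. That $G(T(G),J(G))=T(G)\cup(G\ms T(G))=G$ is trivial. For the other composite, start from $(T,J)$, form $G=T\cup J$, and check that the greedy algorithm applied to $G$ recovers exactly $T$ (so that $J(G)=G\ms T=J$). Here the point is that an edge $\{x,y\}\in J$ with $x<y$ is an inversion of $T$, so $x$ is already an ancestor of $y$ in $T$; processing edges in the prescribed order, by the time the algorithm reaches the level of $y$ all of $y$'s ancestors (in particular $x$) are already connected to $x_0$ via $T$-edges, so the $J$-edge $\{x,y\}$ is always rejected by the greedy rule and the $T$-edge $\{p(y),y\}$ is the one retained. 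An induction on the vertices in increasing order makes this precise. The main obstacle, as flagged, is the compatibility argument showing that the non-tree edges of the canonically chosen spanning tree are exactly inversions and that the algorithm is "stable" under adding inversion edges; everything else is bookkeeping with the rooted-tree partial order $\trianglelefteq$ and the predecessor map $p$.
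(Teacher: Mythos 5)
Your proposal diverges from the Gessel--Wang construction in two essential ways, and both lead to genuine failures.

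\textbf{The forward map is wrong.} You send $(T,J)$ to $T\cup J$, interpreting an inversion $(x,y)$ with $x\triangleleft y$, $x>y$ as the edge $\{x,y\}$. But the correct encoding attaches to the inversion $(j,k)$ the edge $\{p(j),k\}$ --- the ancestor endpoint is the \emph{parent} of $j$, not $j$ itself. This is not cosmetic: your version is not even injective. Take $X=\{1,2,3\}$ and the tree $T$ with edges $\{1,3\},\{3,2\}$ rooted at $x_0=1$, so $1\triangleleft 3\triangleleft 2$ and $I(T)=\{(3,2)\}$. Under your rule, $(3,2)$ contributes the edge $\{3,2\}$, which is already a tree edge; hence $(T,\varnothing)$ and $(T,\{(3,2)\})$ map to the same graph $T$. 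Under the correct rule, $(3,2)$ contributes $\{p(3),2\}=\{1,2\}$, a genuine new edge, and one recovers the triangle. In general an inversion $(j,k)$ with $j=p(k)$ always collides with a tree edge under your encoding, so the map cannot be a bijection.

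\textbf{The backward map is wrong.} You build $T$ by a breadth-first / Kruskal-like rule with \emph{smallest}-vertex priority. Such trees have cross edges, so the claim ``every edge of $G\setminus T$ is a pair $a\triangleleft b$'' fails. Again with the triangle on $\{1,2,3\}$: your rule produces $T=\{\{1,2\},\{1,3\}\}$, in which $2$ and $3$ are incomparable siblings; $I(T)=\varnothing$, yet the non-tree edge $\{2,3\}$ must be accounted for. The construction that works is a \emph{depth-first} search choosing the \emph{maximal} unvisited neighbour at each step; this forces every non-tree edge to be an ancestor--descendant edge $\{a,b\}$ with $a\triangleleft b$, and the DFS-by-max rule then guarantees that the child $j$ of $a$ lying on the path to $b$ satisfies $j>b$, so $(j,b)\in I(T)$ and the edge is recovered as $\{p(j),b\}$. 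Both the direction of the search (DFS, not BFS) and the priority (max, not min) are needed; your ``stability'' argument in the mutual-inverseness paragraph inherits the same error and does not go through.
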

\begin{proof}
(see \cite{gessel_depth}). To a pair $(T,J)$ we associate the graph with edges being those of $T$, together with edges $\{p(j),k\}$ for $(j,k)\in J$. Conversely, given a connected graph $G$, we construct a tree $T$ by performing a depth-first search, that is, $T$ is constructed recursively as follows: we first define $x$ as $x_0$. In each step, if possible, we choose the maximal successor $x_{\max}$ of $x$ which is not already a node of $T$, add the edge $\{x,x_{\max}\}$ to $T$ and replace $x$ by $x_{\max}$; otherwise, we replace $x$ by $p(x)$. Finally, we define $J$ as the set of all $(j,k)\in I(T)$ such that the edge $\{p(j),k\}$ belongs to $G\setminus T$.
\end{proof}


Now we assume $X$ to be an $n$-coloured set, that is, we choose a function $c:X\rightarrow\{1,\ldots,n\}$. We want to enumerate graphs (resp.\ connected graphs, resp.\ trees) according to the colours of the nodes. We choose indeterminates $t_{ij}=t_{ji}$ for $1\leq i\leq j\leq n$ and define the weight $t^G$ of a graph $G$ on $X$ by $\prod_{\{x,y\}\in G}t_{c(x),c(y)}$.\\

For a tuple $\al=(\al_1,\ldots,\al_n)$ of nonnegative integers, we choose a finite $n$-coloured set $X$ containing $\al_i$ elements of colour $i$ for $i=1,\ldots,n$. Let $\mathcal{G}_\al$ be the set of graphs on $X$, let $\mathcal{C}_\al$ be the subset of connected graphs, and let $\mathcal{T}_\al$ be the subset of trees. We choose indeterminates $z_1,\ldots,z_n$ and denote $\frac{z^\al}{\al!}=\frac{z_1^{\al_1}}{\al_1!}\cdot\ldots\cdot \frac{z_n^{\al_n}}{\al_n!}$.\\

By the exponential formula \cite[Corollary 5.1.6]{stanley_enumerative2}, we have
\begin{equation}\label{zt}\sum_{\al\in\bN^n}\sum_{G\in\mathcal{G}_\al}t^G\frac{z^\al}{\al!}=\exp\bigg(\sum_{\al\in\bN^n\ms\set0}\sum_{G\in\mathcal{C}_\al}t^G\frac{z^\al}{\al!}\bigg).\end{equation}
The left hand side of equation \eqref{zt} can be made explicit, noting that the choice of a graph $G$ is just the choice of an arbitrary subset of $\binom{X}{2}$, namely
$$\sum_{\al\in\bN^n}\sum_{G\in\mathcal{G}_\al}t^G\frac{z^\al}{\al!}=\sum_{\al\in\bN^n}\prod_i(1+t_{ii})^{\binom{\al_i}{2}}\prod_{i<j}(1+t_{ij})^{\al_i\al_j}\frac{z^\al}{\al!}.$$

Using Lemma \ref{le:gw}, we can also rewrite the right hand side of equation \eqref{zt}, namely as
$$\exp\bigg(\sum_{\al\in\bN^n\ms\set0}\sum_{T\in\mathcal{T}_\al}\prod_{\{x,y\}\in T}t_{c(x),c(y)}\prod_{(x,y)\in I(T)}(1+t_{c(p(x)),c(y)})\frac{z^\al}{\al!}\bigg).$$

Putting these equations together, we arrive at

$$\sum_{\al\in\bN^n}\prod_i(1+t_{ii})^{\binom{\al_i}{2}}\prod_{i<j}(1+t_{ij})^{\al_i\al_j}\frac{z^\al}{\al!}=$$
$$=\exp\bigg(\sum_{\al\in\bN^n\ms\set0}\sum_{T\in\mathcal{T}_\al}\prod_{\{x,y\}\in T}t_{c(x),c(y)}\prod_{(x,y)\in I(T)}(1+t_{c(p(x)),c(y)})\frac{z^\al}{\al!}\bigg).$$

Let $r=(r_{ij})$ be a symmetric integer $n\xx n$ matrix.
We replace $t_{ij}$ by \linebreak
$q^{\oh(r_{ij}+r_{ji})}-1=q^{r_{ij}}-1$ for $1\leq i,j\leq n$ and replace $z_i$ by $\frac{x_i}{q-1}$. Using the fact that a tree in $\mathcal{T}_\al$ contains precisely $\sum_i\al_i-1$ edges, the previous equality can be rewritten as
\begin{multline*}
\sum_{\al\in\bN^n}\frac{q^{\oh\sum_{i,j}r_{ij}\al_i\al_j}}{(q-1)^{\sum_i\al_i}}\frac{x^\al}{\al!}\\
=\exp\bigg(\sum_{\al\in\bN^n\ms\set0}
\frac{q^{\oh\sum_i r_{ii}\al_i}}{q-1}\frac{x^\al}{\al!}
\sum_{T\in\mathcal{T}_\al}\prod_{\{x,y\}\in T}
\frac{q^{r_{c(x),c(y)}}-1}{q-1}
\prod_{(x,y)\in I(T)}q^{r_{c(p(x)),c(y)}}\bigg).
\end{multline*}

We have thus proved:

\begin{theorem}\label{pos}
Given a symmetric integer $n\xx n$ matrix $r=(r_{ij})$, there exist polynomials $b_\al\in\bZ[q^{\pm \oh}]$ for $\al\in\bN^n\ms\set0$ such that
$$\sum_{\al\in\bN^n}\frac{q^{\oh\sum_{i,j}r_{ij}\al_i\al_j}}{(q-1)^{\sum_i\al_i}}\frac{x^\al}{\al!}=\exp\bigg(\frac{1}{q-1}\sum_{\al\in\bN^n\ms\set0}b_\al\frac{x^\al}{\al!}\bigg).$$
If all $r_{ij}$ are nonnegative then $b_\al\in\bN[q^{\oh}]$ for $\al\in\bN^n\ms\set0$.
\end{theorem}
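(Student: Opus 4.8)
The plan is to read off both the asserted identity and a closed form for the coefficients $b_\al$ from the exponential formula \eqref{zt} together with the Gessel--Wang bijection of Lemma~\ref{le:gw}, and then to check integrality and positivity of that closed form. First I would take \eqref{zt}, make its left-hand side explicit (a graph on $X$ is an arbitrary subset of $\binom X2$, contributing $\prod_i(1+t_{ii})^{\binom{\al_i}2}\prod_{i<j}(1+t_{ij})^{\al_i\al_j}$), and rewrite its right-hand side via Lemma~\ref{le:gw}: summing $t^G$ over $G\in\cC_\al$ becomes summing over pairs $(T,J)$ with $T\in\cT_\al$ and $J\sb I(T)$, where the sum over $J$ factors as $\prod_{(x,y)\in I(T)}(1+t_{c(p(x)),c(y)})$. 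This yields the displayed identity just before the theorem, with the $t_{ij}$ still indeterminates.

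Next I would specialise $t_{ij}\mapsto q^{r_{ij}}-1$, so that $1+t_{ij}\mapsto q^{r_{ij}}$, together with $z_i\mapsto q^{\oh r_{ii}}x_i/(q-1)$. On the left-hand side the $\al$-term then becomes $q^{\oh\sum_{i,j}r_{ij}\al_i\al_j}x^\al/((q-1)^{\n\al}\al!)$; the only point needing care here is the bookkeeping $r_{ii}\binom{\al_i}2+\oh r_{ii}\al_i=\oh r_{ii}\al_i^2$ combined with the symmetry of $r$, which is exactly why the diagonal rescaling by $q^{\oh r_{ii}}$ is required to pass from the ``graph'' exponents $\binom{\al_i}2$ to the quadratic form $\sum_{i,j}r_{ij}\al_i\al_j$. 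On the right-hand side, since every $T\in\cT_\al$ has exactly $\n\al-1$ edges, I would split $(q-1)^{-\n\al}$ as one leftover factor $(q-1)^{-1}$ together with $\n\al-1$ factors distributed over the edges of $T$, arriving at the identity of the theorem with
$$b_\al=q^{\oh\sum_i r_{ii}\al_i}\sum_{T\in\cT_\al}\,\prod_{\{x,y\}\in T}\frac{q^{r_{c(x),c(y)}}-1}{q-1}\,\prod_{(x,y)\in I(T)}q^{r_{c(p(x)),c(y)}},$$
a quantity depending only on $\al$ and not on the auxiliary coloured set $X$.

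The two asserted properties are then immediate from this closed form. Each edge factor $(q^{r_{ij}}-1)/(q-1)$ is a $q$-integer (equal to $0$ or to $1+q+\dots+q^{r_{ij}-1}$ when $r_{ij}\ge0$), hence lies in $\bZ[q^{\pm1}]$ for any $r_{ij}\in\bZ$ and in $\bN[q]$ when $r_{ij}\ge0$; each inversion factor $q^{r_{c(p(x)),c(y)}}$ lies in $\bZ[q^{\pm1}]$, or in $\bN[q]$ when $r\ge0$; and $q^{\oh\sum_i r_{ii}\al_i}$ lies in $\bZ[q^{\pm\oh}]$, or in $\bN[q^{\oh}]$ when $r\ge0$. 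Hence $b_\al\in\bZ[q^{\pm\oh}]$ in general and $b_\al\in\bN[q^{\oh}]$ when all $r_{ij}\ge0$. The main thing worth emphasising is that this positivity is essentially automatic \emph{only because} the connected-graph generating function was rewritten through trees: a direct inversion of the exponential (e.g.\ by M\"obius inversion over set partitions) would produce a signed expression, whereas the Gessel--Wang form exhibits $b_\al$ as a sum of manifestly monomial-positive terms, so no cancellation has to be argued — the one genuinely computational step is the exponent bookkeeping in the specialisation.
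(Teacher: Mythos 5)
Your proof is correct and follows essentially the same route as the paper's: exponential formula, Gessel--Wang rewriting via trees and inversions, then the specialisation $t_{ij}\mapsto q^{r_{ij}}-1$, $z_i\mapsto q^{\oh r_{ii}}x_i/(q-1)$ and the count of $|\al|-1$ tree edges. If anything, your version is slightly more careful than the paper's in making the extra diagonal rescaling $q^{\oh r_{ii}}$ explicit in the substitution (the paper states $z_i\mapsto x_i/(q-1)$ and silently absorbs the factor $q^{\oh\sum_i r_{ii}\al_i}$ into the displayed identity), and in spelling out the closed form of $b_\al$ as a manifestly monomial-positive sum over trees.
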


\begin{corollary}\label{corpos}
Under the assumptions of the theorem, the value of the polynomial $b_\al$ at $q^{\frac{1}{2}}=1$ equals
$$b_\al(1)=\sum_{T\in\mathcal{T}_\al}\prod_{\{x,y\}\in T}r_{c(x),c(y)}.$$
\end{corollary}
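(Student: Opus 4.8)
The plan is to extract an explicit formula for $b_\al$ from the proof of Theorem~\ref{pos} and then specialize $q=1$. Comparing the last displayed identity in that proof with the statement of Theorem~\ref{pos}, and using that $\exp$ is injective on the ring of power series in $x_1,\dots,x_n$ without constant term (take logarithms and match coefficients of $\frac{x^\al}{\al!}$, noting the common factor $\frac1{q-1}$ in the exponents), I would deduce
$$b_\al=q^{\oh\sum_i r_{ii}\al_i}\sum_{T\in\mathcal{T}_\al}\ \prod_{\{x,y\}\in T}\frac{q^{r_{c(x),c(y)}}-1}{q-1}\ \prod_{(x,y)\in I(T)}q^{r_{c(p(x)),c(y)}}.$$
This is visibly a Laurent polynomial in $q^{\pm\oh}$, consistent with Theorem~\ref{pos}, since $\frac{q^m-1}{q-1}\in\bZ[q^{\pm1}]$ for every $m\in\bZ$.

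Next I would set $q^\oh=1$, equivalently $q=1$. The prefactor $q^{\oh\sum_i r_{ii}\al_i}$ and every factor $q^{r_{c(p(x)),c(y)}}$ in the inversion-set product become $1$, so only the product over the edges of $T$ contributes. For a single edge $\{x,y\}$ of $T$, putting $m=r_{c(x),c(y)}$, one has $\left.\frac{q^m-1}{q-1}\right|_{q=1}=m$: for $m\ge1$ this is $1+q+\dots+q^{m-1}$ at $q=1$; for $m=0$ the expression is identically $0$; and for $m<0$ one rewrites $\frac{q^m-1}{q-1}=-q^{m}(1+q+\dots+q^{-m-1})$, which at $q=1$ equals $-(-m)=m$. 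Therefore
$$b_\al(1)=\sum_{T\in\mathcal{T}_\al}\prod_{\{x,y\}\in T}r_{c(x),c(y)},$$
as claimed.

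I expect no real obstacle here: the only points needing a line of care are the evaluation $\left.\frac{q^m-1}{q-1}\right|_{q=1}=m$ for non-positive $m$ (equivalently, that the formula for $b_\al$ has no pole at $q=1$, which is already guaranteed by Theorem~\ref{pos}), and the legitimacy of comparing the two exponents, which rests on the injectivity of $\exp$ on the relevant power-series ring.
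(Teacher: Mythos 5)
Your proof is correct and takes essentially the same route the paper intends: the corollary carries no separate proof because $b_\al$ can be read off explicitly from the identity displayed immediately before Theorem~\ref{pos}, after which specializing $q^{\oh}=1$ gives the claim. Your verification that $\frac{q^m-1}{q-1}$ evaluates to $m$ at $q=1$ for all $m\in\bZ$, and the appeal to injectivity of $\exp$ on power series with vanishing constant term over $\bQ(q^{\oh})$, are exactly the points worth making explicit.
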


The above theorem in particular applies to the matrix $r=r_Q$ of a symmetric   quiver $Q$ (that is, $r(i,j)=r(j,i)$ for all $i,j\in Q_0$) and the trivial stability.
By Definition \ref{df:ab1} and formula \eqref{eq:trivial stab}
$$\sum_{\al\in\Ga_+(Q)}\frac{q^{\oh r(\al,\al)}}{(\gme)^{\n\al}}\frac{x^\al}{\al!}
=\exp\Bigg(\frac{\sum_{\al\in\Ga_+^*(Q)}g_\triv(\al)\frac{x^\al}{\al!}}{q^\oh-q^{-\oh}}\Bigg).
$$
Using notation of Theorem \ref{pos} we obtain
$$g_\triv(\al)=q^{\oh(\n\al-1)}b_\al\in\bN[q^{\oh}].$$
Application of Corollary \ref{corpos} yields
$$g_\triv(\al)|_{q^\oh=1}=\sum_{T\in\cT_\al}\prod_{\set{i_k,j_l}\in T}r(i,j),$$
where we identify $\cT_\al$ with the set of trees on $Q(\al)_0$.
Define an unoriented graph $\overline{Q}(\alpha)$ with the set of vertices $Q(\al)_0$, and with the number of edges between $i_k$ and $j_l$ being the number of arrows from $i$ to $j$ in $Q$ (this being well-defined since $Q$ is symmetric).

\begin{corollary}
\label{triv_special}
The value of $g_{\triv}(\alpha)$ at $q^\oh=1$ equals the number of spanning trees
of $\overline{Q}(\alpha)$.
\end{corollary}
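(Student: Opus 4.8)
The plan is to read off the final statement as the specialization $q^{\oh}=1$ of the formula already derived just above, namely
$$g_\triv(\al)|_{q^\oh=1}=\sum_{T\in\cT_\al}\prod_{\set{i_k,j_l}\in T}r(i,j),$$
and then to reinterpret the right-hand side as a weighted count of spanning trees of $\ub Q(\al)$ with all weights equal to $1$. First I would recall that $\cT_\al$ is, by construction, the set of (combinatorial) trees on the vertex set $Q(\al)_0$, equivalently the set of spanning trees of the complete graph on $Q(\al)_0$. The factor $\prod_{\set{i_k,j_l}\in T}r(i,j)$ counts, for each edge $\set{i_k,j_l}$ of $T$, the number of arrows in $Q$ from $i$ to $j$; since $Q$ is symmetric this is $r(i,j)=r(j,i)$, so the factor is well-defined independently of how the unordered edge is written, and it is precisely the number of edges of $\ub Q(\al)$ joining $i_k$ and $j_l$.

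The key step is therefore the following elementary combinatorial identity: for any loopless multigraph $H$ with underlying simple vertex set $V$, the number of spanning trees of $H$ equals $\sum_T \prod_{e\in T} \mu_H(e)$, where the sum is over spanning trees $T$ of the underlying simple graph (equivalently, over trees on $V$ supported on edges present in $H$) and $\mu_H(e)$ is the edge multiplicity of $e$ in $H$. This is immediate: a spanning tree of $H$ is obtained by choosing a spanning tree $T$ of the underlying simple graph together with, for each edge $e\in T$, a choice of one of the $\mu_H(e)$ parallel copies of $e$ in $H$; the product $\prod_{e\in T}\mu_H(e)$ counts exactly these choices, and distinct choices give distinct spanning trees of $H$. (Edges $\set{i_k,j_l}$ with $r(i,j)=0$ contribute a zero factor and hence drop out of the sum, consistently with the fact that they are absent from $\ub Q(\al)$.)

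Applying this with $H=\ub Q(\al)$, $V=Q(\al)_0$, and $\mu_H(\set{i_k,j_l})=r(i,j)$, the sum $\sum_{T\in\cT_\al}\prod_{\set{i_k,j_l}\in T}r(i,j)$ is exactly the number of spanning trees of $\ub Q(\al)$, which completes the proof. I do not expect any genuine obstacle here; the only point requiring a line of care is the well-definedness of the edge multiplicity on the unordered edge set, which uses symmetry of $Q$, and the bookkeeping that the relabelled vertices $i_k$ of $Q(\al)$ inherit exactly $r(i,j)$ parallel edges to $j_l$ from the construction of $Q(\al)$ (and hence of $\ub Q(\al)$), as recorded in the definition of $\ub Q(\al)$ above.
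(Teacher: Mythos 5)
Your proof is correct and takes essentially the same route as the paper: the paper states the specialization $g_\triv(\al)|_{q^\oh=1}=\sum_{T\in\cT_\al}\prod_{\set{i_k,j_l}\in T}r(i,j)$ just before the corollary, introduces $\ub Q(\al)$ as the multigraph with $r(i,j)$ parallel edges between $i_k$ and $j_l$, and leaves the elementary ``spanning trees of a multigraph $=$ weighted sum over trees of the simple graph with edge-multiplicity weights'' identity as immediate; you have simply written out that last step, including the correct observations that $\cT_\al$ ranges over all trees on $Q(\al)_0$ (with trees using absent edges contributing zero) and that symmetry of $Q$ makes the multiplicity well defined on unordered pairs.
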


\begin{example} The previous result allows us to easily derive explicit formulas for the value of $g_{\triv}(\alpha)$ at $q^{\frac{1}{2}}=1$ for small quivers $Q$:
\begin{enumerate}
\item For the quiver with a single vertex $i$ and $m$ loops, we get the formula $$g_{\triv}(d\cdot i)|_{q^{\frac{1}{2}}=1}=m^{d-1}d^{d-2}:$$
by Cayley's theorem, there are $d^{d-2}$ spanning trees in a complete graph with $d$ nodes. For each of the $d-1$ edges of this tree, we can freely choose a colour from $1$ to $m$.
\item For the quiver with two vertices $i$ and $j$ and $a$ arrows from $i$ to $j$ and from $j$ to $i$, we get the formula $$g_{\triv}(d_i\cdot i+d_j\cdot j)|_{q^{\frac{1}{2}}=1}=a^{d_i+d_j-1}d_i^{d_j-1}d_j^{d_i-1}:$$
using the matrix-tree theorem \cite[Theorem 5.6.8]{stanley_enumerative2}, the number of spanning trees in a complete bipartite quiver with $d_i$ nodes on one side and $d_j$ nodes on the other side can be computed to be $d_i^{d_j-1}d_j^{d_i-1}$; again, we are allowed to choose among $a$ colours for each of the $d_i+d_j-1$ edges of such a spanning tree.
\end{enumerate}
\end{example}
\section{Admissible series}\label{sec:admissible}

Given a set $I$, we define
$$\Ga(I)=\cP(I,\bZ),\qquad \Ga_+(I)=\cP(I,\bN),\qquad \Ga_+^*(I)=\Ga_+(I)\ms\set0.$$
Let $r:I\xx I\to\bZ$ be some symmetric function. We can naturally extend it to a symmetric bilinear form $r:\Ga(I)\xx\Ga(I)\to\bZ$. Let $\bT=\bQ(q^\oh)\pser{x_i,i\in I}$ and
\begin{equation}
T_r:\bT\to\bT,\qquad x^\al\mto q^{\oh r(\al,\al)}x^\al,\quad \al\in\Ga_+(I).
\label{eq:1}
\end{equation}
In this notation, Theorem \ref{pos} can be rephrased as
$$T_r\exp\bigg(\frac{\sum x_i}{q-1}\bigg)
=\exp\bigg(\frac{\sum b_\al x^\al/\al!}{q-1}\bigg)
$$
for some polynomials $b_\al\in\bZ[q^{\pm\oh}]$, $\al\in\Ga_+^*(I)$. Moreover, if $r(i,j)\ge0$ for $i,j\in I$ then $b_\al\in\bN[q^\oh]$.
Using this result we are going to prove

\begin{theorem}
\label{th:2}
Let $a_\al\in\bZ[q^{\pm\oh}]$ for $\al\in\Ga_+^*(I)$. Then
$$T_r\exp\Big(\frac{\sum a_\al x^\al/\al!}{q-1}\Big)
=\exp\Big(\frac{\sum b_\al x^\al/\al!}{q-1}\Big)$$
for some polynomials $b_\al\in\bZ[q^{\pm\oh}]$. Moreover, if $r(i,j)\ge0$ for $i,j\in I$ and $a_\al\in\bN[q^\oh]$ for $\al\in\Ga_+^*(I)$ then $b_\al\in\bN[q^\oh]$ for $\al\in\Ga_+^*(I)$.
\end{theorem}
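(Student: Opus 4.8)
The statement to prove is Theorem~\ref{th:2}: for arbitrary $a_\al\in\bZ[q^{\pm\oh}]$ the transformed exponential $T_r\exp(\frac{\sum a_\al x^\al/\al!}{q-1})$ is again of the form $\exp(\frac{\sum b_\al x^\al/\al!}{q-1})$ with $b_\al$ polynomial, and positivity is inherited. The strategy I would follow is a bootstrap from Theorem~\ref{pos}, which is exactly the special case $a_i=1$, $a_\al=0$ for $\nn{-}\neq$ a single vertex — or more precisely from the refined statement obtained in the proof of Theorem~\ref{pos} applied to a \emph{larger} index set. The key idea: replace each generator $x_i$ by a sum of new generators, one for each ``copy'' indexed by the monomials appearing in the $a_\al$'s, so that the general exponential becomes the \emph{standard} exponential $\exp(\frac{\sum x_j}{q-1})$ over an enlarged index set, and then apply Theorem~\ref{pos} there and push forward.

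\emph{Step 1 (reduction to monomial coefficients).} First write each $a_\al=\sum_k c_{\al,k} q^{\oh s_{\al,k}}$ as a $\bZ$-linear (resp.\ $\bN$-linear, in the positive case) combination of monomials $q^{\oh s}$. It suffices to treat the case where each $a_\al$ is a single monomial with $\bN$-coefficient, since the general case follows by writing $\exp(\frac{\sum a_\al x^\al/\al!}{q-1})$ as a product/quotient and using that the class of series expressible as $\exp(\frac{\sum b_\al x^\al/\al!}{q-1})$ with $b_\al\in\bZ[q^{\pm\oh}]$ is closed under multiplication — which is immediate since $T_r$ is not yet applied and the exponents add. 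So we reduce to: $a_\al=q^{\oh t_\al}$ for finitely many $\al$ (a general $a_\al\in\bN[q^{\oh}]$ is a sum of such; and $T_r$ of a product of such exponentials is the product of the $T_r$-images only after a compatibility check — see Step 3).

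\emph{Step 2 (blow up the index set).} Choose a new index set $I'$ and a ``colouring'' map $\pi:I'\to\Ga_+^*(I)$ so that the fibre over $\al$ has a distinguished structure encoding the monomial $a_\al$; concretely, introduce for each $\al$ in the support of $a$ a single new variable $y_\al$ and set $z_\al := q^{\oh t_\al} x^\al$, so that $\exp(\frac{\sum a_\al x^\al/\al!}{q-1})$ matches the shape of the left side of Theorem~\ref{pos} with ``vertices'' $\al$ and symmetric form $r'(\al,\be)$ defined to make $T_r x^{\al}=q^{\oh r(\al,\al)}x^\al$ and the cross terms $q^{\oh r(\al,\be)}$ come out right — i.e.\ $r'(\al,\be):=r(\al,\be)$ restricted to the chosen generators, plus a diagonal shift by $t_\al$ absorbed into the variables. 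If all $r(i,j)\ge0$ then $r'(\al,\be)=r(\al,\be)=\sum_{i,j}\al_i\be_j r(i,j)\ge0$, so the positivity hypothesis of Theorem~\ref{pos} is preserved. Apply Theorem~\ref{pos} over $I'$: this yields polynomials $b'_{\mathbf n}\in\bZ[q^{\pm\oh}]$ (resp.\ $\bN[q^\oh]$) with $T_{r'}\exp(\frac{\sum y_\al/\ldots}{q-1})=\exp(\frac{\sum b'_{\mathbf n}y^{\mathbf n}/\mathbf n!}{q-1})$.

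\emph{Step 3 (specialise back and match exponents).} Substitute $y_\al\mapsto x^\al$ (up to the monomial twist), i.e.\ apply the algebra homomorphism $\bQ(q^\oh)\pser{y_\al}\to\bT$ sending $y_\al\mapsto q^{\oh t_\al}x^\al$. One checks this intertwines $T_{r'}$ with $T_r$ on the image — this is the content that needs the skew/symmetric forms to be compatible under $\pi$ (an analogue of the Remark after the definition of $Q(\al)$, where $\pi$ preserves the form). Under this substitution $b'_{\mathbf n}y^{\mathbf n}/\mathbf n!$ collapses onto the monomial $x^{\sum n_\al\al}$, and collecting terms defines $b_\ga = \sum_{\sum n_\al\al=\ga} b'_{\mathbf n}\cdot(\text{multinomial factor})\cdot q^{(\ldots)}$, manifestly in $\bZ[q^{\pm\oh}]$ (resp.\ $\bN[q^\oh]$). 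Since only finitely many $a_\al$ are involved in any fixed graded piece, these sums are finite. This gives the claimed $b_\ga$.

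\emph{Main obstacle.} The delicate point is Step~3: verifying that the substitution $y_\al\mapsto q^{\oh t_\al}x^\al$ genuinely conjugates $T_{r'}$ into $T_r$, and that the quadratic exponents bookkeeping works out — i.e.\ that $r'(\mathbf n,\mathbf n)=r(\sum n_\al\al,\sum n_\al\al)$ after the variable twist. This is the same phenomenon as ``$\pi$ preserves the skew-symmetric form'' used for the quivers $Q(\al)$, but here one must be careful that the \emph{diagonal} twists $t_\al$ do not interfere — they are linear in $\mathbf n$, hence get absorbed cleanly into the monomial substitution rather than into the quadratic form. Once that compatibility is confirmed, everything else is formal manipulation of exponential generating functions and the finiteness of the support of $a$.
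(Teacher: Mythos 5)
Your overall strategy (enlarge the index set, apply Theorem~\ref{pos} there with the induced form, then specialise back) is indeed the paper's strategy. But there are two real problems.

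First, your Step~1 ``reduction to monomial coefficients'' does not and cannot work as a reduction. You yourself flag that $T_r$ applied to a product of exponentials is \emph{not} the product of the $T_r$-images, since $T_r(x^\al x^\be)=q^{\oh r(\al+\be,\al+\be)}x^{\al+\be}$ carries the cross term $q^{r(\al,\be)}$ and so is not $T_r(x^\al)T_r(x^\be)$. Deferring that to a ``compatibility check in Step~3'' is circular: Step~3 never supplies a reason why $T_r$ would distribute over the factorisation. Fortunately this reduction is not needed at all; the paper avoids it entirely by sending $y_\al\mto a_\al x^\al/\al!$ for the \emph{full} polynomial $a_\al$ in a single substitution, so there is never an issue of breaking $a_\al$ into monomials or of taking quotients (the latter would also threaten the positivity claim, since the set of admissible exponentials is closed under products but not quotients in $\bN[q^{\oh}]$).

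Second, and more importantly, your Step~3 passes over precisely the point that carries content. After Theorem~\ref{pos} is applied over $I'=\Ga_+^*(I)$ and one substitutes $y_\al\mto a_\al x^\al/\al!$, the term indexed by $\ga\in\Ga_+^*(I')$ is
$$
b_\ga(q)\prod_{\al\in I'}\frac{(a_\al x^\al/\al!)^{\ga(\al)}}{\ga(\al)!}
= b_\ga(q)\Big(\prod_\al a_\al^{\ga(\al)}\Big)\cdot
\frac{\big(\sum_\al\ga(\al)\al\big)!}{\prod_\al\ga(\al)!\,(\al!)^{\ga(\al)}}
\cdot\frac{x^{\sum_\al\ga(\al)\al}}{\big(\sum_\al\ga(\al)\al\big)!}.
$$
You call the resulting sum ``manifestly in $\bZ[q^{\pm\oh}]$,'' but this is only true if the rational number $\frac{(\sum_\al\ga(\al)\al)!}{\prod_\al\ga(\al)!\,(\al!)^{\ga(\al)}}$ is an integer. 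That is not a plain multinomial coefficient, and it is the one non-formal ingredient of the proof. The paper establishes it by exhibiting an embedding of the wreath product $\prod_\al (S_\al)^{\ga(\al)}\rtimes S_{\ga(\al)}$ into $S_{\sum_\al\ga(\al)\al}$, where $S_\al=\prod_{i\in I}S_{\al_i}$. Without this observation (or an equivalent one) your proof does not close, so the gap is genuine. The compatibility you correctly identify as the ``main obstacle'' --- that $r'(\ga,\ga)=r\big(\sum_\al\ga(\al)\al,\sum_\al\ga(\al)\al\big)$ --- is in fact the easy part: it is immediate from defining $r'$ on $I'\xx I'$ by restriction of the bilinear extension of $r$ and then extending bilinearly again.
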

\begin{proof}
For any $\al\in I'=\Ga_+^*(I)$ we define a new variable $y_\al$ and we substitute $a_\al x^\al/\al!$ by this variable in the above expression. The bilinear form $r:\Ga(I)\xx\Ga(I)\to\bZ$ restricts to a map $r:I'\xx I'\to\bZ$ and then extends to a bilinear form $r':\Ga(I')\xx\Ga(I')\to\bZ$. We can define an operator $T'=T_{r'}$ on the algebra $\bT'=\bQ(q^\oh)\pser{y_\al,\al\in I'}$ using this bilinear form.
Then, by the previous theorem,
$$T'\exp\Big(\frac{\sum y_\al}{q-1}\Big)
=\exp\Big(\frac{\sum_{\ga\in\Ga_+^*(I')} b_\ga(q)\prod _{\al\in I'}\frac{y_\al^{\ga(\al)}}{\ga(\al)!}}{q-1}\Big).
$$
Now we substitute $y_\al$ by $a_\al(q) x^\al/\al!$. To prove the theorem we have to show that
$$\frac{(\sum_\al\ga(\al)\al)!}{\prod_{\al}\ga(\al)!(\al!)^{\ga(\al)}}\in\bN$$
for every $\ga\in\Ga_+^*(I')$.
For any $\al\in\Ga(I)$ let $S_\al=\prod_{i\in I}S_{\al_i}$ be the product of permutation groups. Then $\prod_\al (S_\al)^{\ga(\al)}\rtimes S_{\ga(\al)}$ can be embedded into $S_{\sum_\al\ga(\al)\al}$. This proves the statement.
\end{proof}

\subsection{Quantum admissibility}\label{subs:qa}
We now repeat some results from \cite[\S6.1]{kontsevich_cohomological} in the context of abelian invariants. Let $I=\set{1,\dots,n}$ be a finite set and let $\ang{-,-}$ be a skew-symmetric bilinear form on $\Ga(I)$. We define a quantum torus structure on $\bT=\bQ(q^\oh)\pser{x_i,i\in I}$ by
$$x^\al\circ x^\be=q^{\oh\ang{\al,\be}}x^{\al+\be}.$$

\begin{definition}
We say that a series in \bT is admissible if it is of the form
$$\exp\Big(\frac{\sum b_\al x^\al/\al!}{q-1}\Big)$$
for some polynomials $b_\al\in\bZ[q^{\pm\oh}]$. We say that a series
$\sum a_\al x^\al\in\bT$ is quantum admissible if
$$\sum_\al a_\al x_1^{\al_1}\circ\dots\circ x_n^{\al_n}$$
is admissible.
\end{definition}

\begin{proposition}
\label{pr:1}
The set of quantum admissible series forms a group in $\bT$ under multiplication $\circ$. Let $Z:\Ga(I)\to\bC$ be a stability function,
$F,F_l\in\bT$ with $F_l=1+\sum_{Z(\al)\in l}a_\al x^\al$, and
$$F=\prod^{\curvearrowright}_{l\sb\bH_+}F_l.$$
Then $F$ is quantum admissible if and only if every $F_l$ is quantum admissible.
\end{proposition}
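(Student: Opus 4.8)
The plan is to prove the two assertions of Proposition \ref{pr:1} separately, starting with the group property. Fix a total ordering of $I$ and recall that a series $G=\sum a_\al x^\al$ is quantum admissible iff its image $G^\circ=\sum a_\al x_1^{\al_1}\circ\dots\circ x_n^{\al_n}$ under the (vector-space, not algebra) identification of $\bT$ with the polynomial ring equals $\exp\big((q-1)\inv\sum b_\al x^\al/\al!\big)$ for polynomial coefficients $b_\al$. The key observation is that passing from the commutative product to $\circ$ only twists monomials by powers of $q^\oh$: writing $c(\al,\be)$ for the appropriate combination of values of $\ang{-,-}$, one has $x^\al\circ x^\be=q^{\oh c(\al,\be)}x^{\al+\be}$, and $c$ is bilinear and integer-valued. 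So the operation "multiply in $\bT$ and re-expand in the basis $x_1^{\al_1}\circ\dots$" corresponds, on the commutative side, to multiplying two series and applying an operator that rescales $x^\ga$ by an explicit power of $q^\oh$ depending only on $\ga$ — this is exactly the shape handled by Theorem \ref{th:2} (with the symmetric form there playing the role of the relevant part of $c$, and noting $c(\al,\al)$ enters for the diagonal). Concretely, if $F=\exp\big((q-1)\inv\sum b_\al x^\al/\al!\big)$ and $G=\exp\big((q-1)\inv\sum b'_\al x^\al/\al!\big)$ are admissible, then $F\circ G$ re-expanded is obtained from $\exp\big((q-1)\inv\sum(b_\al+b'_\al)x^\al/\al!\big)$ — already admissible since the sum of two such exponentials of the same type is again one by Baker--Campbell--Hausdorff in the pro-nilpotent setting, the commutators contributing further polynomial coefficients — by applying the monomial-rescaling operator $T$ associated with the skew form, and Theorem \ref{th:2} guarantees the result is still of the admissible form with polynomial $b''_\al\in\bZ[q^{\pm\oh}]$. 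Inverses are handled the same way since $\exp\big((q-1)\inv\sum b_\al x^\al/\al!\big)\inv=\exp\big(-(q-1)\inv\sum b_\al x^\al/\al!\big)$ is manifestly admissible, and $(F\circ G)\inv=G\inv\circ F\inv$ lies in the set by what was just shown; the identity $1$ is admissible trivially.

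Next I would address the factorization statement. Since the number of rays $l\sb\bH_+$ with $Z(\al)\in l$ for some $\al$ in the relevant (finite-up-to-any-degree) range is at most countable and the product $\prod^\curvearrowright_l F_l$ converges in the $x$-adic topology, one direction is immediate: if every $F_l$ is quantum admissible then their ordered $\circ$-product is a (possibly infinite, but $x$-adically convergent) product of elements of the group just constructed, hence quantum admissible, because admissibility is a closed condition — an $x$-adic limit of admissible series is admissible, as the coefficient $b_\al$ stabilizes after finitely many steps and each is a polynomial. For the converse, suppose $F=\prod^\curvearrowright_l F_l$ is quantum admissible; I want to deduce each $F_l$ is. Here I would argue by induction on the ordering of rays (or on $x$-adic degree), peeling off the extreme ray: let $l_0$ be the first ray in the clockwise order, so $F=F_{l_0}\circ F'$ where $F'=\prod^\curvearrowright_{l\ne l_0}F_l$. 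The point is that $F_{l_0}=1+\sum_{Z(\al)\in l_0}a_\al x^\al$ is supported on a single ray, i.e.\ on dimension vectors $\al$ all proportional to a fixed $\ga_0$ under $Z$; such $\al$ satisfy $\ang{\al,\be}=0$ for all $\be$ with $Z(\be)\in l_0$ too (since $Z$ restricted to the ray is $\bR$-linear and $\ang{-,-}$ factors through $Z$ up to the degeneration locus — more precisely the ray-supported part is $\circ$-commutative). Therefore on $F_{l_0}$ the twisted product $\circ$ agrees with the commutative product up to the global diagonal twist, so $F_{l_0}$ is quantum admissible iff $1+\sum a_\al x^\al/(\text{suitable rescaling})$ has an admissible logarithm — and I would show this is forced by $F$ being admissible together with $F'$ being a product over the remaining rays whose leading terms (in the ray-filtration refining the $x$-adic one) do not interfere. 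The cleanest way to make "do not interfere" precise is to use the partial order on rays and a suitable grading: the coefficient of $x^\al$ in $F$ for $\al$ on the ray $l_0$ receives a contribution from $F_{l_0}$ plus corrections from products involving strictly later rays, and these corrections are $\bZ[q^{\pm\oh}]$-combinations of lower-degree data already known (by induction) to be admissible; so admissibility of $F$ transfers to $F_{l_0}$, and then $F'=F_{l_0}\inv\circ F$ is quantum admissible by the group property, and we finish by induction.

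The main obstacle, I expect, will be the converse direction of the factorization — isolating a single $F_l$ from the ordered product and showing its quantum admissibility is \emph{forced}. The subtlety is that the decomposition $F=\prod_l F_l$ of a quantum-admissible $F$ into ray-components is unique (this is essentially the Harder--Narasimhan / wall-crossing uniqueness), but uniqueness of the $F_l$ does not by itself make them admissible; one genuinely needs that the group of quantum-admissible series is closed under the "project onto a ray" operation, or equivalently that the ray-decomposition of an element of the group stays in the group. I would handle this by exhibiting, for each ray $l$, a retraction-like argument: work in the quotient $\bT/\bT_{>l}$ where $\bT_{>l}$ is the $x$-adically closed ideal generated by monomials $x^\al$ with $Z(\al)$ strictly clockwise-past $l$ in the relevant sense, observe that in this quotient $F\equiv F_{l}\circ(\text{product of strictly earlier }F_{l'})$, use the already-proven earlier-ray admissibility inductively, apply the group property to cancel the earlier factors, and read off that the image of $F_l$ is admissible; then lift. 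The bookkeeping of the ray-filtration and checking it is compatible with $\circ$ and with $\exp$ is where the real work sits, but it is entirely parallel to \cite[\S6.1]{kontsevich_cohomological}, and Theorem \ref{th:2} supplies exactly the polynomiality input that in the cohomological DT setting required the quantum dilogarithm factorization.
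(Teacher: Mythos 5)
Your outline follows the same order as the paper, which defers to \cite[Prop.~9, Prop.~10]{kontsevich_cohomological} with Theorem~\ref{th:2} supplying the polynomiality input, but the central step of your group-property argument contains a genuine error. You assert that ``multiply in $\bT$ and re-expand in the basis $x_1^{\al_1}\circ\dots\circ x_n^{\al_n}$'' amounts, on the commutative side, to multiplying two series and then applying an operator that rescales $x^\ga$ by a $q$-power depending only on $\ga$, which you then feed into Theorem~\ref{th:2}. This is false. Writing $\phi(\al)=\oh\sum_{i<j}\ang{i,j}\al_i\al_j$, so that $x_1^{\al_1}\circ\dots\circ x_n^{\al_n}=q^{\phi(\al)}x^\al$, and $\tilde a_\al=a_\al q^{\phi(\al)}$, $\tilde b_\be=b_\be q^{\phi(\be)}$ for the coefficients of the twisted series, the coefficient of $x^\ga$ in the twisted version of $F\circ G$ is
\begin{equation*}
\sum_{\al+\be=\ga}\tilde a_\al\,\tilde b_\be\,q^{B(\al,\be)},\qquad B(\al,\be)=\sum_{i<j}\ang{i,j}\al_i\be_j,
\end{equation*}
and $B(\al,\be)$ depends on the decomposition $(\al,\be)$ of $\ga$, not on $\ga$ alone (its antisymmetric part is $\oh\ang{\al,\be}$, which has no reason to vanish). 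So what you have is a genuinely twisted convolution, not a commutative product followed by a diagonal operator $T_r$; Theorem~\ref{th:2} does not apply the way you invoke it, and the BCH remark compounds the confusion (in a commutative algebra $\exp A\cdot\exp B=\exp(A+B)$ with no BCH correction, while for $\circ$ you are not working with a $\circ$-exponential). As written, the argument does not establish that quantum admissible series are closed under $\circ$.

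This gap propagates: your treatment of the factorization's ``only if'' direction peels off $F_{l_0}$ and then passes to $F_{l_0}\inv\circ F$ by invoking the group property, so it inherits the problem. In addition, the filtration sketch for the converse is reasonable in outline but never actually shows that the coefficients of $\log$ of the ray factor lie in $\bZ[q^{\pm\oh}]$ --- that is precisely where the content sits, and in the paper it is outsourced to the proof of \cite[Prop.~9]{kontsevich_cohomological}, with Theorem~\ref{th:2} replacing the plethystic ingredient \cite[Theorem~9]{kontsevich_cohomological}. To repair the group-property step you would need to argue closer to \cite[Prop.~10]{kontsevich_cohomological}: decompose each factor into single-ray pieces (where $\circ$ is commutative and, by the next proposition of the paper, quantum admissibility reduces to admissibility), then commute and recombine the ray factors using the wall-crossing relation, rather than trying to absorb the skew twist into a monomial rescaling.
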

\begin{proof}
The proof for a generic stability function (this means that $\al,\be\in Z\inv(l)\cap\Ga_+(I)$ implies proportionality $\al\parallel\be$) goes through the lines of \cite[Prop.~9]{kontsevich_cohomological} (the main tool used there is \cite[Theorem 9]{kontsevich_cohomological} which is a plethystic analogue of Theorem \ref{th:2}). This proof works actually for an arbitrary stability function (using the fact that quantum admissible series form a group). The proof of the fact that quantum admissible series form a group goes through the lines of \cite[Prop.~10]{kontsevich_cohomological}.
\end{proof}

\begin{proposition}
Let $F=\sum a_\al x^\al$ be such that $\ang{\al,\be}=0$ whenever $a_\al,a_\be\ne0$. Then $F$ is quantum admissible if and only if it is admissible.
\end{proposition}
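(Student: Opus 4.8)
The plan is to reduce the statement to the integrality theorem~\ref{th:2} for the twist operators $T_r$.

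Put $S=\set{\al\ne0:a_\al\ne0}$. By hypothesis $\ang{\al,\be}=0$ for all $\al,\be\in S$, and by bilinearity this persists on the submonoid $M\sb\Ga_+(I)$ generated by $S$. Hence $x^\mu\circ x^\nu=q^{\oh\ang{\mu,\nu}}x^{\mu+\nu}=x^{\mu+\nu}$ for $\mu,\nu\in M$, so the closed subalgebra $\bT_M\sb\bT$ spanned by $\set{x^\mu:\mu\in M}$ is commutative and its twisted product is the ordinary one; consequently $\exp$ and $\log$ on $\bT_M$ do not depend on which of the two products one uses, and for a series supported on $M$ — such as $F$ — ``admissible'' means the same thing in the quantum torus as in the commutative power series ring. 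A direct computation in the quantum torus gives
$$x_1^{\al_1}\circ\dots\circ x_n^{\al_n}=q^{\oh\lambda(\al)}x^\al,\qquad\lambda(\al):=\sum_{i<j}\al_i\al_j\ang{e_i,e_j}\in\bZ,$$
so $\hat F:=\sum_\al a_\al\,x_1^{\al_1}\circ\dots\circ x_n^{\al_n}$ equals $D(F)$, where $D\colon x^\mu\mto q^{\oh\lambda(\mu)}x^\mu$ is a linear endomorphism of $\bT_M$. Thus the proposition says exactly that $D$ and $D\inv\colon x^\mu\mto q^{-\oh\lambda(\mu)}x^\mu$ carry admissible series supported on $M$ to admissible series. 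The two statements are symmetric (replacing $\ang{-,-}$ by $-\ang{-,-}$, which is again skew and vanishes on $S$, turns $D$ into $D\inv$), so I treat $D$.

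The map $D$ is not an algebra homomorphism of $\bT_M$; it fails by the polarization $B(\mu,\nu):=\lambda(\mu+\nu)-\lambda(\mu)-\lambda(\nu)$, and this is where the hypothesis enters. For $\al,\be\in S$ set $c(\al,\be)=\sum_{i<j}\al_i\be_j\ang{e_i,e_j}$; then $c(\al,\be)-c(\be,\al)=\ang{\al,\be}=0$, so $c$ is \emph{symmetric} on $S\times S$, whence $B=2c$ there and $B$ takes even values on all of $M$; equivalently $\lambda|_M$ is the quadratic form of the symmetric \emph{integer} bilinear form $\oh B$. Now imitate the proof of Theorem~\ref{th:2}: introduce an indeterminate $y_\al$ for each $\al\in S$, put $\bT'=\bQ(q^\oh)\pser{y_\al:\al\in S}$, and let $\pi\colon\bT'\to\bT_M$ be the continuous algebra map $y_\al\mto x^\al$ (surjective since $S$ generates $M$). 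The function $\tilde c([\al],[\be]):=c(\al,\be)$ is a well-defined — by the symmetry just established — symmetric integer bilinear form on $\Ga(S)$, and one checks $\tilde c(m,m)=\lambda(\pi m)$, so that $D\circ\pi=\pi\circ T_{\tilde c}$ for the operator $T_{\tilde c}$ as in~\eqref{eq:1}. By Theorem~\ref{th:2} the operators $T_{\pm\tilde c}$ preserve admissibility in $\bT'$, and $\pi$ takes admissible series to admissible series: passing from $\exp\bigl(\frac1{q-1}\sum_m \frac{b_m}{m!}y^m\bigr)$ to its $\pi$-image replaces the coefficient of $x^\mu$ by $\sum_{\pi m=\mu}\frac{\mu!}{m!}b_m$, and the ratios $\mu!/m!$ are positive integers by the embedding $\prod_\al(S_\al)^{m(\al)}\rtimes S_{m(\al)}\hookrightarrow S_\mu$ already used in the proof of Theorem~\ref{th:2}.

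What remains — and is the genuine obstacle — is the \emph{descent}: given an admissible $G$ supported on $M$, one must produce a lift $\tilde G\in\pi\inv(G)$ to which the preceding applies and then control the interaction of the factors $q^{\oh\lambda}$ with the denominators occurring in $\log G$. Evenness of $B$ on $M$ is exactly what makes this work: it forces $\sum_\al m(\al)\lambda(\al)\equiv\lambda(\mu)\pmod2$ whenever $\pi m=\mu$, so the reordering factors differ from the single factor $q^{\oh\lambda(\mu)}$ only by integer powers of $q$; these are units of $\bZ[q^{\pm\oh}]$ that are regular and non-vanishing at $q^\oh=\pm1$, hence cannot spoil divisibility by $q-1$ after clearing denominators. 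Assembling these ingredients just as in the proof of Theorem~\ref{th:2} yields the claimed equivalence.
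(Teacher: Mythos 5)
Your route is genuinely different from the paper's. The paper invokes Proposition~\ref{pr:1}: choose a generic stability, decompose $F=\prod^{\curvearrowright}_l F_l$, observe that all supports still lie in a commutative set so $F=\prod_lF_l$ as an ordinary product, and then reduce to the assertion that a series supported on a single ray is quantum admissible iff admissible (which amounts to a one-variable instance of the twist operator $T_r$). You instead work globally with the twist operator $D:x^\mu\mapsto q^{\oh\lambda(\mu)}x^\mu$ on the commutative subalgebra $\bT_M$, and this is an attractive, more elementary strategy.

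Your analysis through the identity $D\circ\pi=\pi\circ T_{\tilde c}$ is correct, and the observation that the polarization $B$ becomes $2c$ on $M\times M$ — so that $\lambda|_M$ is an honest integer symmetric quadratic form — is exactly the right structural point. The claim that $\pi$ preserves admissibility (via $\mu!/m!\in\bN$) is also fine.

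However, there is a genuine gap in the descent step, and your final paragraph does not close it. You choose $\pi(y_\al)=x^\al$, and you then need to produce an \emph{admissible} lift $\tilde G\in\pi\inv(G)$ of an arbitrary admissible $G=\exp\bigl(\tfrac{1}{q-1}\sum_\mu b_\mu x^\mu/\mu!\bigr)$. With this $\pi$, the natural lifts fail: writing $\pi(\tilde G)=G$ forces $\sum_{\pi m=\mu}a_m/m!=b_\mu/\mu!$, and for any concentrated choice $a_{m_\mu}=b_\mu\,m_\mu!/\mu!$ one multiplies $b_\mu$ by a rational number that is generically \emph{not} an integer, so $a_{m_\mu}\notin\bZ[q^{\pm\oh}]$. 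The heuristic in the last paragraph — evenness of $B$, ``reordering factors are units,'' ``cannot spoil divisibility by $q-1$'' — does not address this: admissibility is not a question of divisibility by $q-1$ but of polynomiality of the $b_\mu$, and the problematic factors are the rational numbers $m!/\mu!$, not powers of $q$.

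The fix is simple and keeps your approach, but requires a different substitution, the one actually used in the proof of Theorem~\ref{th:2}. Introduce a variable $y_\mu$ for each $\mu$ in the support of $\log G$ (not for each $\al\in S$), and let $\sigma\colon y_\mu\mapsto b_\mu x^\mu/\mu!$. Then $G=\sigma\bigl(\exp(\tfrac{1}{q-1}\sum_\mu y_\mu)\bigr)$ is lifted canonically and admissibly, the restriction of $c$ to the support still gives a symmetric integer form $c''$ satisfying $D\circ\sigma=\sigma\circ T_{c''}$ (the verification is identical to yours, since $c''(m,m)=c(\nn m,\nn m)=\lambda(\nn m)$), Theorem~\ref{th:2} applies to $T_{\pm c''}$, and pushing forward along $\sigma$ uses exactly the integrality $\frac{(\sum_\mu m(\mu)\mu)!}{\prod_\mu m(\mu)!(\mu!)^{m(\mu)}}\in\bN$ already established in the proof of Theorem~\ref{th:2}. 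With this modification there is nothing left to descend, and the argument becomes a clean alternative to the paper's slope-filtration proof.
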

\begin{proof}
Let $Z$ be a generic stability and let $F=\prod^{\curvearrowright}_{l\sb\bH_+}F_l$.
One can show that if $a_\al$ is a nontrivial coefficient of $F_l$ and $a_\be$ is a nontrivial coefficient of $F_{l'}$ then $\ang{\al,\be}=0$. This implies that $F_{l}\circ F_{l'}=F_{l}F_{l'}$ and $F=\prod_lF_l$. Now we use the fact that for $F_l$ admissibility and quantum admissibility coincide.
\end{proof}

\begin{corollary}
\label{cr:1}
Let $Q$ be a quiver and $Z:\Ga(Q)\to\bC$ be a stability function.
Then for any ray $l\in\bH_+$ the series
$$1+\sum_{Z(\al)\in l}f_Z(\al)\frac{x^\al}{\al!}$$
is quantum admissible. If $\ang{\al,\be}=0$ whenever $Z(\al),Z(\be)\in l$ and $f_Z(\al),f_Z(\be)\ne0$ then the above expression is admissible.
\end{corollary}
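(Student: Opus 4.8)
The plan is to deduce both assertions directly from the wall-crossing identity of Theorem \ref{th:wall-cross} together with the group structure of quantum admissible series established in Proposition \ref{pr:1}. First I would invoke Theorem \ref{th:wall-cross}, which tells us that
$$\prod_{l}^{\curvearrowright}\bigg(1+\sum_{Z(\al)\in l}f_Z(\al)\frac{x^\al}{\al!}\bigg)=\sum_{\al\in\Ga_+(Q)}f_{\triv}(\al)\frac{x^\al}{\al!}.$$
The right-hand side is, by formula \eqref{eq:trivial stab}, of the shape $\sum_\al \frac{q^{\oh r(\al,\al)}}{(\gme)^{\n\al}}\frac{x^\al}{\al!}$; but this is precisely $T_r\exp\!\big(\frac{\sum x_i}{q-1}\big)$ written out in the quantum torus (after expanding the $\circ$-product and keeping track of the twist), so by Theorem \ref{pos} it is admissible, hence quantum admissible when read as a $\circ$-product. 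Thus the left-hand side $F:=\prod_l^{\curvearrowright} F_l$ with $F_l=1+\sum_{Z(\al)\in l}f_Z(\al)\frac{x^\al}{\al!}$ is quantum admissible. Now Proposition \ref{pr:1} says that for an ordered product $F=\prod_l^{\curvearrowright}F_l$ indexed by rays, $F$ is quantum admissible if and only if every $F_l$ is; applying the ``only if'' direction gives that each $F_l$ is quantum admissible, which is the first claim.

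For the second claim, assume the ray $l$ satisfies $\ang{\al,\be}=0$ whenever $Z(\al),Z(\be)\in l$ and $f_Z(\al),f_Z(\be)\ne0$. Then the only monomials $x^\al$ occurring with nonzero coefficient in $F_l$ pairwise satisfy $\ang{\al,\be}=0$, so $F_l$ falls under the hypothesis of the Proposition preceding this corollary (the one asserting that for such a series quantum admissibility and admissibility coincide). Combining that Proposition with the quantum admissibility of $F_l$ just proved yields that $F_l$ is admissible, as desired.

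I do not expect a serious obstacle here, since all the heavy lifting has been done: the content is entirely in Theorem \ref{th:wall-cross} (to exhibit $F$ as a product), in Theorem \ref{pos} (to see the total generating function is admissible), and in Proposition \ref{pr:1} (to pass from admissibility of the product to admissibility of each ray factor). The one point requiring a line of care is the bookkeeping identifying $\sum_\al f_{\triv}(\al)x^\al/\al!$, as an element of the quantum torus $\bT_Q$, with the commutative series $T_r\exp(\sum x_i/(q-1))$ to which Theorem \ref{pos} applies: one must check that expanding $x^{\al^1}\circ\cdots\circ x^{\al^s}$ and resumming reproduces exactly the power $q^{\oh r(\al,\al)}$ appearing in \eqref{eq:trivial stab}, i.e.\ that the skew twist and the symmetric form $r$ are compatible in the way used in Section \ref{sec:gesselwang}. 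This is the mild technical step; everything else is a direct citation.
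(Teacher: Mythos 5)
Your overall strategy matches the paper's: reduce via Theorem \ref{th:wall-cross} to the trivial stability, recognize the resulting total generating series as admissible, and transfer quantum admissibility back to the individual ray factors via Proposition \ref{pr:1}; then for the second claim invoke the proposition characterizing when admissibility and quantum admissibility coincide. So the scaffolding is right, and steps (c) and the second paragraph are fine.

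There is, however, a genuine gap in the step that you flag yourself as ``mild bookkeeping.'' It is not mild, and as you formulate it it is also aimed at the wrong target. Quantum admissibility of $\sum_\al f_{\triv}(\al)\frac{x^\al}{\al!}$ is, by definition, admissibility of the series in which each coefficient is additionally multiplied by the quantum twist $q^{\oh\sum_{i<j}\ang{i,j}\al_i\al_j}$; nothing about ``expanding $x^{\al^1}\circ\cdots\circ x^{\al^s}$ and resumming'' reproduces $q^{\oh r(\al,\al)}$, because that factor is already sitting in $f_{\triv}(\al)$ before any $\circ$-product is taken. What must actually be checked is that the \emph{combined} exponent
$$r(\al,\al)+\sum_{i<j}\ang{i,j}\al_i\al_j$$
is a quadratic form given by a symmetric \emph{integer} matrix (and this is not the symmetrization of $r_Q$, nor is $r_Q$ itself symmetric for a general quiver). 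The paper verifies this concretely by rewriting $\sum_{i<j}\ang{i,j}\al_i\al_j-\hi(\al,\al)$ as $-\sum_i\hi(i,i)\al_i^2-2\sum_{i<j}\hi(j,i)\al_i\al_j$, which is visibly of the required form. Without this computation the appeal to Theorem \ref{pos} (or better, Theorem \ref{th:2}) is unjustified, since those theorems require a symmetric integer matrix as input.

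Two smaller inaccuracies in the same step: the trivial-stability series is $\sum_\al \frac{q^{\oh r(\al,\al)}}{(\gme)^{\n\al}}\frac{x^\al}{\al!}$, with $\gme=q^{-\oh}(q-1)$ in the denominator rather than $q-1$, so it is not literally $T_r\exp(\sum x_i/(q-1))$: there is an extra $q^{\oh\n\al}$. The paper handles this by starting from the admissible series $\exp(\sum x_i/\gme)$ (whose coefficients $a_{e_i}=q^\oh$ lie in $\bZ[q^{\pm\oh}]$) and then applying Theorem \ref{th:2}, which allows arbitrary Laurent-polynomial $a_\al$ and arbitrary symmetric integer $r$ — Theorem \ref{pos} alone is not enough here. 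You should cite Theorem \ref{th:2} and carry out the symmetric-integrality computation of the combined quadratic form to make the proof complete.
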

\begin{proof}
It is enough to prove quantum admissibility for the trivial stability.
For the trivial stability we have
$$f_{Z}(\al)=\frac{(q^\oh)^{\al\cdot\al-\hi(\al,\al)}}{(q^\oh-q^{-\oh})^{\n\al}}.$$
We have to verify admissibility for these invariants shifted by $q^{\oh\sum_{i<j}\ang{i,j}\al_i\al_j}$.
Note that the series
$$\sum_{\al\in\Ga_+(Q)}\frac{1}{(\gme)^{\n\al}}\frac{x^\al}{\al!}=\exp\bigg(\frac{\sum x_i}{\gme}\bigg)$$
is admissible. In order to apply Theorem \ref{th:2} we have to show that the quadratic form
$$\al\mto\sum_{i<j}\ang{i,j}\al_i\al_j-\hi(\al,\al)$$
is given by a symmetric matrix with integer entries. But it equals
$$-\sum_{i<j}\hi(j,i)\al_i\al_j-\sum_{i\ge j}\hi(i,j)\al_i\al_j
=-\sum_i\hi(i,i)\al_i^2-\sum_{i<j}2\hi(j,i)\al_i\al_j.$$
\end{proof}

\begin{corollary}
Under the assumptions of Definition \ref{df:ab1}, the abelian quiver invariants $g_Z(\al)$ are elements of $\bZ[q^{\pm\oh}]$.
\end{corollary}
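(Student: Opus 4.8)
The plan is to read off the statement from the admissibility result just established in Corollary \ref{cr:1}. The hypotheses of Definition \ref{df:ab1} are precisely condition \eqref{eq:admiss ray} on the ray $l$, so Corollary \ref{cr:1} applies and tells us that
\[
1+\sum_{Z(\al)\in l}f_Z(\al)\frac{x^\al}{\al!}
\]
is admissible; that is, there are polynomials $b_\al\in\bZ[q^{\pm\oh}]$, indexed by $\al\in Z\inv(l)\cap\Ga_+^*(Q)$, with
\[
1+\sum_{Z(\al)\in l}f_Z(\al)\frac{x^\al}{\al!}
=\exp\Bigg(\frac{\sum_{Z(\al)\in l}b_\al\frac{x^\al}{\al!}}{q-1}\Bigg).
\]

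Next I would match this with the defining equation \eqref{eq:ab1} for the $g_Z(\al)$. Since $\gme=q^{-\oh}(q-1)$, we have $\frac{1}{\gme}=\frac{q^\oh}{q-1}$, so \eqref{eq:ab1} can be rewritten as
\[
1+\sum_{Z(\al)\in l}f_Z(\al)\frac{x^\al}{\al!}
=\exp\Bigg(\frac{\sum_{Z(\al)\in l}q^\oh g_Z(\al)\frac{x^\al}{\al!}}{q-1}\Bigg).
\]
Applying $\log$ — which is a well-defined inverse of $\exp$ on series with constant term $1$ in the $x$-adically completed power series algebra in the $x_i$ — to both presentations and comparing the coefficients of $x^\al$ yields $q^\oh g_Z(\al)=b_\al$ for every relevant $\al$. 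Hence $g_Z(\al)=q^{-\oh}b_\al$, and since $b_\al\in\bZ[q^{\pm\oh}]$ this gives $g_Z(\al)\in\bZ[q^{\pm\oh}]$.

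I do not expect any real obstacle in this argument: all the substance — the admissibility of the $f_Z$-series, which in turn rests on Theorem \ref{th:2} and Proposition \ref{pr:1} — is already in hand, and what remains is bookkeeping with the two normalizations $q-1$ and $\gme$. The only point deserving a word of justification is that the admissible presentation of a series is unique, so that the identification $b_\al=q^\oh g_Z(\al)$ is unambiguous; this follows from the uniqueness of $\log$ on $1$ plus the augmentation ideal of the power series algebra.
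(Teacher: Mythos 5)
Your proposal is correct and follows the paper's own argument: the paper likewise invokes Corollary \ref{cr:1} to get admissibility and then reads off the conclusion, leaving implicit exactly the normalization bookkeeping (relating the $q-1$ in the definition of admissibility to $\gme=q^{-\oh}(q-1)$ in the definition of $g_Z$) that you spell out. The only thing you add is the explicit identification $b_\al=q^{\oh}g_Z(\al)$ via uniqueness of $\log$, which is a sensible clarification rather than a different route.
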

\begin{proof}
By the previous corollary the series
$$1+\sum_{Z(\al)\in l}f_Z(\al)\frac{x^\al}{\al!}$$
is admissible. Therefore, the invariants $g_Z(\al)$ defined by
$$1+\sum_{Z(\al)\in l}f_Z(\al)\frac{x^\al}{\al!}
=\exp\Bigg(\frac{\sum_{Z(\al)\in l}g_Z(\al)\frac{x^\al}{\al!}}{q^\oh-q^{-\oh}}\Bigg)$$
satisfy $g_Z(\al)\in\bZ[q^{\pm\oh}]$.
\end{proof}

\section{Geometricity of abelian invariants}\label{geometricity}

The aim of this section is to prove that the abelian invariant $g_Z(\al)$ (under the numerical conditions of Definition \ref{df:ab1}) can be naturally interpreted as the virtual motive $[M_{Z\df}(Q(\al))]_\vir$ of a moduli space $M_{Z\df}(Q(\al))=R_{Z\df}(Q(\al))/G(Q(\al))$ satisfying the properties of Proposition \ref{purity}. In particular, this implies the positivity of $g_Z(\al)$ as a polynomial in $\bL$. The strategy is to deform the stability $Z$ to a generic one $Z\df$, to analyze the Harder-Narasimhan stratification (with respect to $Z\df$) of the $Z$-semistable locus, and to show that the corresponding relative Harder-Narasimhan recursion equals the defining equation for $g_Z(\al)$.

For any subset $I\sb Q_0$ we denote $\mu_Z(\one_I)$ by $\mu_Z(I)$. For an abelian representation $M$ of $Q$, the subset $I$ is called $M$-closed if for any arrow $\alpha:i\rightarrow j$, we have $j\in I$ if $i\in I$ and $M_\alpha\not=0$. Note that the $M$-closed subsets are precisely the supports of subrepresentations of $M$.


\begin{lemma}
\label{lm:deform}
Given a quiver $Q$ with a stability $Z$, there exists a stability $Z\df$ on $Q$ such that
\begin{enumerate}
	\item $\mu_{Z\df}(I)\ne\mu_{Z\df}(J)$ for any non-empty subsets $I\ne J$ of $Q_0$.
	\item If $\mu_Z(I)<\mu_Z(J)$ then $\mu_{Z\df}(I)<\mu_{Z\df}(J)$.
\item An abelian representation $M\in R(Q)$ is $Z\df$-stable if and only if it is $Z\df$-semistable.
\item If an abelian representation $M\in R(Q)$ is $Z\df$-stable, it is $Z$-semistable.
\item If an abelian representation $M\in R(Q)$ is $Z$-stable, it is $Z\df$-stable.
\end{enumerate}
\end{lemma}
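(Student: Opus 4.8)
The plan is to construct $Z\df$ by an explicit perturbation of $Z$ and then verify the five assertions in turn, most of which will follow formally once (1) and (2) are in place. Write $Z(\al) = -d(\al) + ir(\al)$ with $r(i) > 0$ for all $i\in Q_0$. The idea is to leave the imaginary part alone, setting $r\df = r$, and to perturb only the real part: put $d\df(i) = d(i) + \eps_i$ for suitable real numbers $\eps_i$, so that $\mu_{Z\df}(I) = \mu_Z(I) + \big(\sum_{i\in I}\eps_i\big)/\big(\sum_{i\in I} r(i)\big)$. There are only finitely many non-empty subsets $I\subset Q_0$, hence only finitely many values $\mu_Z(I)$ and finitely many "walls" (linear conditions on the $\eps_i$ of the form $\mu_{Z\df}(I) = \mu_{Z\df}(J)$). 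First I would observe that for each pair $I\ne J$ with $\mu_Z(I)\ne\mu_Z(J)$ the wall is automatically avoided once all $\eps_i$ are small enough, so only the pairs with $\mu_Z(I) = \mu_Z(J)$ impose genuine constraints; for those, the wall $\sum_{i\in I}\eps_i \cdot \sum_{j\in J}r(j) = \sum_{j\in J}\eps_j\cdot\sum_{i\in I}r(i)$ is a proper linear subspace of the $\eps$-space, and a generic choice of $(\eps_i)$ in a small neighbourhood of the origin avoids the (finite) union of all these subspaces. This gives (1); (2) holds because, for $\eps$ small, strict inequalities $\mu_Z(I) < \mu_Z(J)$ are preserved under the perturbation.

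Next I would deduce (3), (4), (5) from (1) and (2). For an abelian representation $M\in R(Q)$, the dimension vectors of its subrepresentations are exactly the $\one_I$ for $I$ ranging over $M$-closed subsets (as noted just before the lemma). So $M$ is $Z\df$-semistable iff $\mu_{Z\df}(I)\le\mu_{Z\df}(Q_0)$ for every $M$-closed $I$, and $Z\df$-stable iff the inequality is strict for every proper non-empty such $I$; by (1) these two conditions coincide, giving (3). For (4): if $M$ is $Z\df$-stable, then for every proper non-empty $M$-closed $I$ we have $\mu_{Z\df}(I) < \mu_{Z\df}(Q_0)$; were $\mu_Z(I) > \mu_Z(Q_0)$, then (2) applied with the roles swapped would force $\mu_{Z\df}(I) > \mu_{Z\df}(Q_0)$, a contradiction, so $\mu_Z(I)\le\mu_Z(Q_0)$, i.e.\ $M$ is $Z$-semistable. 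For (5): if $M$ is $Z$-stable, then $\mu_Z(I) < \mu_Z(Q_0)$ for all proper non-empty $M$-closed $I$, and (2) upgrades this to $\mu_{Z\df}(I) < \mu_{Z\df}(Q_0)$, so $M$ is $Z\df$-stable.

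The only slightly delicate point is the bookkeeping in the genericity argument for (1): one must be sure that after perturbing one does not accidentally create a new coincidence $\mu_{Z\df}(I) = \mu_{Z\df}(Q_0)$ that destroys (4) or (5), but this is precisely what (1) rules out, and the inequalities of (2) are stable under further shrinking of $\eps$, so there is no circularity — one chooses $\eps$ generic and small enough to satisfy both (1) and the finitely many strict inequalities needed for (2) simultaneously. I expect the main (very mild) obstacle to be simply checking that the finite union of walls is a proper subset of a neighbourhood of $0$, which is immediate since each wall is a hyperplane (or lower-dimensional) and there are finitely many of them; everything else is formal manipulation of slopes of subrepresentations.
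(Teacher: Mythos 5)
Your proposal is correct and follows essentially the same strategy as the paper: construct $Z\df$ by a small generic perturbation of $Z$ (the paper sets $Z\df = Z + \eps Z'$ for a generic stability $Z'$ and small $\eps$, thereby perturbing both real and imaginary parts, whereas you fix $r$ and perturb only $d$ — a cosmetic difference), obtain (1) from genericity, obtain (2) from continuity of finitely many strict inequalities, and then deduce (3), (4), (5) purely formally from (1) and (2) exactly as the paper does, including the contrapositive use of (2) for item (4).
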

\begin{proof}
There exists a stability $Z':\bZ^{Q_0}\to\bC$ satisfying the first condition. Indeed, for any non-empty subsets $I\ne J$ of $Q_0$ the condition $\mu_{Z'}(I)\ne\mu_{Z'}(J)$ removes a hypersurface from the space of all stability conditions. We define $Z\df=Z+\eps Z'$ for $0<\eps\ll1$. One can see that if vectors $u,u',v,v'\in\bC$ are such that $u+\eps u'\parallel v+\eps v'$ for $0<\eps\ll1$ then $u\parallel v$ and $u'\parallel v'$. This implies that $\mu_{Z\df}(I)\ne\mu_{Z\df}(J)$ for any non-empty subsets $I\ne J$ of $Q_0$. The second condition is automatically satisfied.

Now the third claim immediately follows from the first claim: if $M$ is a $Z\df$-semistable abelian representation and $I\sb Q_0$ is a non-empty proper $M$-closed subset, then $\mu_{Z\df}(I)\le\mu_{Z\df}(Q_0)$. By the first claim, we have $\mu_{Z\df}(I)\ne\mu_{Z\df}(Q_0)$, thus $\mu_{Z\df}(I)<\mu_{Z\df}(Q_0)$, proving stability. For the fourth claim, assume $M$ to be $Z\df$-stable, and let $I\sb Q_0$ be a non-empty proper $M$-closed subset. Then $\mu_{Z\df}(I)<\mu_{Z\df}(Q_0)$, thus $\mu_{Z}(I)\le\mu_{Z}(Q_0)$ by the second claim, proving $Z$-semistability of $M$. For the fifth claim, assume $M$ to be $Z$-stable, and let $I$ be a non-empty proper $M$-closed subset. Then $\mu_{Z}(I)<\mu_{Z}(Q_0)$, thus $\mu_{Z\df}(I)<\mu_{Z\df}(Q_0)$ by the second claim. The lemma is proved.
\end{proof}

\begin{corollary}
Let an abelian representation $M\in R(Q)$ have the Harder-Narasimhan type $(I_1,\ldots,I_s)$ (compare the proof of Theorem \ref{th:wall-cross}) with respect to $Z\df$.
Then $M$ is $Z$-semistable if and only if $\mu_Z(I_k)=\mu_Z(Q_0)$ for all $k$.
\end{corollary}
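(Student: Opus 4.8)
The plan is to establish the two implications separately, using in both directions only the additivity of the maps $d$ and $r$ along short exact sequences, together with Lemma \ref{lm:deform}(4) for one direction. Write $\mu=\mu_Z(Q_0)$ and recall from the proof of Theorem \ref{th:wall-cross} that the $Z\df$-Harder--Narasimhan filtration $0=M_0\subset M_1\subset\dots\subset M_s=M$ has $M_k$ supported on $J_k:=I_1\cup\dots\cup I_k$ and $M_k/M_{k-1}$ supported on $I_k$; by Lemma \ref{lm:deform}(3) each subquotient $M_k/M_{k-1}$ is $Z\df$-stable.

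For the ``only if'' direction, I would assume $M$ is $Z$-semistable. Then $M_k\subset M$ gives $\mu_Z(J_k)\le\mu$, while $M/M_k$ being a quotient supported on $Q_0\setminus J_k$ gives $\mu_Z(Q_0\setminus J_k)\ge\mu$ --- a semistable representation has all nonzero quotients of slope at least its own, by applying the subrepresentation inequality to the kernel. Since $d(J_k)+d(Q_0\setminus J_k)=d(Q_0)=\mu\,r(Q_0)=\mu(r(J_k)+r(Q_0\setminus J_k))$, both slope inequalities are forced to be equalities, so $\mu_Z(J_k)=\mu$ for every $k$; subtracting the relations for $k$ and $k-1$ yields $d(I_k)=\mu\,r(I_k)$, that is $\mu_Z(I_k)=\mu$.

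For the ``if'' direction, I would assume $\mu_Z(I_k)=\mu$ for all $k$. By Lemma \ref{lm:deform}(4) each $M_k/M_{k-1}$ is $Z$-semistable, and being supported on $I_k$ it is $Z$-semistable of slope exactly $\mu$. Then I would invoke the standard fact that the $Z$-semistable representations of a fixed slope $\mu$ form an extension-closed subcategory: given a nonzero subrepresentation $N$ of an extension of two such, one intersects $N$ with the sub-object and projects it to the quotient object, and adds the two resulting slope inequalities (again using additivity of $d$ and $r$) to get $\mu_Z(N)\le\mu$. An induction on $k$ will then show that each $M_k$, and hence $M=M_s$, is $Z$-semistable.

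The only point that needs care is the degenerate case in which some $r(\al)$ vanishes, so that slopes may equal $\infty$: there one checks that $r(Q_0)=0$ forces $r\equiv0$ on all of $Q_0$, making every representation semistable and the statement trivial, while otherwise every $r$-value appearing above is strictly positive (because $Z$ is nonzero on $\Ga_+^*(Q)$), so all the divisions are legitimate. I expect this bookkeeping, together with the precise invocation of extension-closedness in the abelian setting, to be the only mild obstacles; the remainder is routine manipulation of the additive invariants $d$ and $r$.
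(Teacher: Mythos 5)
Your ``if'' direction is fine and matches the paper's: invoke Lemma \ref{lm:deform}(4) to see each $M_k/M_{k-1}$ is $Z$-semistable, then use that $Z$-semistable representations of a fixed slope form an extension-closed subcategory.

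The ``only if'' direction has a genuine gap. You claim that the two inequalities $\mu_Z(J_k)\le\mu$ (from the subrepresentation $M_k$) and $\mu_Z(Q_0\setminus J_k)\ge\mu$ (from the quotient $M/M_k$), together with the additivity $d(J_k)+d(Q_0\setminus J_k)=\mu\,r(J_k)+\mu\,r(Q_0\setminus J_k)$, force both to be equalities. But setting $a=d(J_k)-\mu\,r(J_k)$ and $b=d(Q_0\setminus J_k)-\mu\,r(Q_0\setminus J_k)$, the additivity gives $a+b=0$ \emph{identically}; hence $a\le 0$ and $b\ge 0$ are the same inequality, not two independent constraints, and they are consistent with any $a<0$. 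So no equality follows. Indeed, the conclusion would be false for an arbitrary decomposition of $Q_0$ into $M$-closed pieces: a $Z$-semistable $M$ certainly admits filtrations whose subquotients have varying $Z$-slopes. The statement is true precisely because $(I_1,\ldots,I_s)$ is the $Z\df$-HN type, and you never use that.

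What is missing is exactly the interplay with $Z\df$ that the paper exploits: the HN property $\mu_{Z\df}(I_1)>\cdots>\mu_{Z\df}(I_s)$ implies $\mu_{Z\df}(J_k)>\mu_{Z\df}(Q_0)$ for $k<s$. On the other hand $Z$-semistability gives $\mu_Z(J_k)\le\mu_Z(Q_0)$. If this were strict, claim (2) of Lemma \ref{lm:deform} would give $\mu_{Z\df}(J_k)<\mu_{Z\df}(Q_0)$, a contradiction. Hence $\mu_Z(J_k)=\mu_Z(Q_0)$ for all $k$, and subtracting consecutive relations (as you do) yields $\mu_Z(I_k)=\mu_Z(Q_0)$. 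Without appealing to claim (2) and the decreasing-slopes property, your argument cannot close.
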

\begin{proof}
Define $I_{\leq k}=I_1\cup\ldots\cup I_k$ for $k=1,\ldots,s$.
Assume that $M$ is $Z$-semistable.
Then the defining condition $$\mu_{Z\df}(I_1)>\mu_{Z\df}(I_2)>\ldots >\mu_{Z\df}(I_s)$$
of a Harder-Narasimhan type implies that $\mu_{Z\df}(I_{\leq k})>\mu_{Z\df}(Q_0)$ for all $k<s$. Since every $I_{\leq k}$ is $M$-closed by definition of the Harder-Narasimhan filtration, we have $\mu_Z(I_{\leq k})\leq \mu_Z(Q_0)$ by $Z$-semistability of $M$. If $\mu_Z(I_{\leq k})<\mu_Z(Q_0)$, then $\mu_{Z\df}(I_{\leq k})<\mu_{Z\df}(Q_0)$ by the second claim of the previous lemma, a contradiction. Thus $\mu_Z(I_{\leq k})=\mu_Z(Q_0)$ for all $k=1,\ldots, s$, which implies $\mu_Z(I_k)=\mu_Z(Q_0)$ for all $k$, proving the first claim.

Now let us prove the converse. By definition of the Harder-Narasimhan type, $M$ admits a filtration by subrepresentations $0=M_0\subset M_1\subset\ldots\subset M_s=M$ such that $M_k/M_{k-1}$ is $Z\df$-semistable with support in $I_k$. By the fourth claim of the previous lemma, each $M_k/M_{k-1}$ is $Z$-semistable, and it is supported on $I_k$ which has the same $Z$-slope as $Q_0$. But an iterated extensions of $Z$-semistable representations of the same $Z$-slope is again $Z$-semistable, proving the converse.
\end{proof}

\begin{proposition}
\label{prp:hn deform}
Let $Q$ be a quiver with a stability $Z$ and let $\al\in\bN^{Q_0}$ be as in Definition \ref{df:ab1}. Let $Z\df$ be a deformation of $Z$ for the quiver $Q(\al)$ as in Lemma \ref{lm:deform}. Then we have an identity of motives
$$[R_Z(Q(\al))]=\sum_{Q(\al)_0=I_1\dot\cup\ldots\dot\cup I_s}q^{\sum_{k<l}r_Q(\pi(I_k),\pi(I_l))}\prod_{k=1}^s[R_{Z\df}(Q(\al)|_{I_k})],$$
where the sum ranges over all unordered disjoint decompositions of $Q(\al)_0$ into subsets $I_k$ such that $\mu_Z(I_k)=\mu_Z(\al)$ for all $k$. 
\end{proposition}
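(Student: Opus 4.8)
The plan is to stratify the $Z$-semistable locus $R_Z(Q(\al))$ by the Harder–Narasimhan type with respect to the deformed stability $Z\df$, and then to read off the motivic identity from the geometry of the associated flag-type varieties. First I would invoke the Corollary preceding the statement: an abelian representation $M\in R(Q(\al))$ lies in $R_Z(Q(\al))$ if and only if its $Z\df$-Harder–Narasimhan type $(I_1,\dots,I_s)$ satisfies $\mu_Z(I_k)=\mu_Z(\al)$ for every $k$ (here $\mu_Z(\al)=\mu_Z(Q_0)=\mu_Z(Q(\al)_0)$). Hence $R_Z(Q(\al))$ decomposes as a disjoint union, over all such \emph{ordered} tuples $(I_1,\dots,I_s)$, of the locally closed strata $S_{(I_1,\dots,I_s)}$ consisting of those $M$ whose $Z\df$-HN filtration has subquotients supported exactly on $I_1,\dots,I_s$ (in that order of decreasing $Z\df$-slope). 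Since the $Z\df$-slopes of the $I_k$ are forced to be distinct and decreasing, the unordered decomposition determines the ordered one, so the sum in the statement over \emph{unordered} decompositions is the correct indexing set.

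Next I would compute the motive of a single stratum $S_{(I_1,\dots,I_s)}$. A point of this stratum is an abelian representation $M$ of $Q(\al)$ together with its (canonical, hence unique) HN filtration $0=M_0\subset M_1\subset\dots\subset M_s=M$ with $M_k/M_{k-1}$ a $Z\df$-semistable abelian representation supported on $I_k$. Forgetting all the extension data, the stratum maps to $\prod_k R_{Z\df}(Q(\al)|_{I_k})$, the product over $k$ of the spaces of $Z\df$-semistable abelian representations of the full subquiver of $Q(\al)$ on the vertex set $I_k$. The fibre of this map is an affine space recording the off-diagonal matrix coefficients, i.e.\ the components $M_a$ for arrows $a:i_p\to j_{p'}$ with $i_p\in I_k$, $j_{p'}\in I_l$, $k<l$ (arrows going the "wrong way," from a later block to an earlier one, must vanish because $I_{\le m}$ is a subrepresentation). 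The number of such arrows is exactly $\sum_{k<l}r_Q(\pi(I_k),\pi(I_l))$, by the definition of the arrows of $Q(\al)$ (an arrow $i_p\to j_{p'}$ exists for each arrow $i\to j$ in $Q$, so their count with $i_p\in I_k$, $j_{p'}\in I_l$ is $\sum_{(a:i\to j)}\#\{i_p\in I_k\}\cdot\#\{j_{p'}\in I_l\}=r_Q(\pi(I_k),\pi(I_l))$). Therefore $[S_{(I_1,\dots,I_s)}]=q^{\sum_{k<l}r_Q(\pi(I_k),\pi(I_l))}\prod_k[R_{Z\df}(Q(\al)|_{I_k})]$, and summing over strata gives the claimed identity.

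The one point requiring care — and the place I expect the main subtlety to lie — is the claim that the stratum $S_{(I_1,\dots,I_s)}$ is, motivically, an affine bundle of the asserted rank over $\prod_k R_{Z\df}(Q(\al)|_{I_k})$. The subtlety is twofold: first, that the map "take the HN filtration" is a well-defined morphism (this is standard — the HN filtration exists and is unique, and in the abelian/thin setting it is simply determined by a partition of the vertex set, hence varies algebraically); second, that once the filtration type is fixed, the representation is uniquely reconstructed from its subquotients together with the free choice of the "wrong-way-forbidden, right-way-free" off-diagonal coefficients, with no constraints among them. Since each $M_{\le m}$ is the unique HN-subrepresentation, the block-triangular shape is forced, and conversely any choice of $Z\df$-semistable subquotients on the $I_k$ together with arbitrary upper-triangular off-diagonal data produces a representation whose filtration by the $M_{\le m}$ has subquotients of the correct slopes in strictly decreasing order, hence is the HN filtration. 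So there genuinely are no constraints, the fibre is $\bA^{\sum_{k<l}r_Q(\pi(I_k),\pi(I_l))}$, and the motive multiplies as $q$ to that power by the cut-and-paste relations in $K_0(\Var_\bk)$. Assembling: $[R_Z(Q(\al))]=\sum_{(I_1,\dots,I_s)}[S_{(I_1,\dots,I_s)}]$, which is exactly the displayed formula.
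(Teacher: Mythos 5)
Your overall strategy is the same as the paper's: stratify $R_Z(Q(\al))$ by the $Z\df$-Harder--Narasimhan type, note via the preceding corollary that the unordered decompositions with $\mu_Z(I_k)=\mu_Z(\al)$ are exactly the types that occur, and compute each stratum as an affine bundle over $\prod_k R_{Z\df}(Q(\al)|_{I_k})$. However, there is a concrete error in the computation of the fibre, and a related omission that the paper is careful about.

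You claim the free off-diagonal components are those $M_a$ for arrows $a:i_p\to j_{p'}$ with $i_p\in I_k$, $j_{p'}\in I_l$ and $k<l$, and that arrows from a later block to an earlier one must vanish. This is backwards. For $M_m=M|_{I_1\cup\dots\cup I_m}$ to be a subrepresentation, each arrow with \emph{tail} in $I_1\cup\dots\cup I_m$ and \emph{head} outside must carry the scalar $0$; running over all $m$ forces precisely the arrows from $I_k$ to $I_l$ with $k<l$ to vanish. The \emph{free} arrows are those from $I_l$ to $I_k$ with $l>k$, so the correct fibre dimension is $\sum_{k<l}r_Q(\pi(I_l),\pi(I_k))$, not $\sum_{k<l}r_Q(\pi(I_k),\pi(I_l))$. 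These two numbers agree \emph{only because} of the symmetry assumption of Definition~\ref{df:ab1}, namely $\ang{\al,\be}=0$, i.e.\ $r_Q(\al,\be)=r_Q(\be,\al)$, for the relevant $\al,\be$ on the ray $l$. You never invoke this assumption, but it is indispensable: without it, the exponent would depend on the ordering of the $I_k$ (which is fixed by the decreasing $Z\df$-slopes, not an artifact of labeling), and the formula in the Proposition, being a sum over \emph{unordered} decompositions with an exponent written as $\sum_{k<l}r_Q(\pi(I_k),\pi(I_l))$, would not even be well-posed. The paper flags exactly this point (``regardless of the ordering of the subsets''). So you arrived at the stated formula by a sign error that happened to cancel against the unused hypothesis; the argument should be repaired by identifying the free arrows as those from $I_l$ to $I_k$ with $l>k$ and then using the symmetry from Definition~\ref{df:ab1} to rewrite the exponent.
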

\begin{proof}
By the previous corollary, $[R_Z(Q(\al))]$ equals the sum of the motives of strata having Harder-Narasimhan type $(I_1,\ldots,I_s)$ with respect to $Z\df$ such that $\mu_Z(I_k)=\mu_Z(\al)$ for all $k=1,\ldots,s$. 
Every unordered disjoint decomposition $Q(\al)_0=I_1\dot\cup\ldots\dot\cup I_s$ into subsets such that $\mu_Z(I_k)=\mu_Z(\al)$ for all $k$ corresponds to a unique such Harder-Narasimhan stratum: indeed, by the first claim of the above lemma, all values $\mu_{Z\df}(I_k)$ are pairwise distinct, thus there exists a unique ordering of the $I_k$ with decreasing $Z\df$-slopes.

Every Harder-Narasimhan stratum is isomorphic to the product of
\linebreak
$\prod_k R_{Z\df}(Q(\al)|_{I_k})$ and a certain affine space encoding the scalars representing the arrows between distinct subsets $I_k$, $I_l$. By the assumption of Definition \ref{df:ab1}, the dimension of this affine space is $\sum_{k<l}r_Q(\pi(I_k),\pi(I_l))$, {\it regardless of the ordering of the subsets}. The proposition is proved.
\end{proof}

\begin{corollary}
Under the assumptions of Proposition \ref{prp:hn deform} we have an identity of virtual motives
$$[R_Z(Q(\al))]_\vir=\sum_{Q(\al)_0=I_1\dot\cup\dots\dot\cup I_s}\prod_{k=1}^s[R_{Z\df}(Q(\al)|_{I_k})]_\vir.$$
where the sum ranges over all unordered disjoint decompositions of $Q(\al)_0$ into subsets $I_k$ such that $\mu_Z(I_k)=\mu_Z(\al)$ for all $k$.
\end{corollary}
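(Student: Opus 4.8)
The plan is to derive the stated identity of virtual motives directly from the identity of \emph{motives} in Proposition~\ref{prp:hn deform}; the only real content is a bookkeeping of powers of $q^{\oh}$. Recall from the definition of $f_Z$ that, for any quiver $Q'$, the semistable locus $R_Z(Q')$ is open in the affine space $R(Q')=\bk^{Q'_1}$, so $[R_Z(Q')]_\vir=q^{-\oh\n{Q'_1}}[R_Z(Q')]$. Applying this to $Q'=Q(\al)$ gives $[R_Z(Q(\al))]_\vir=q^{-\oh r(\al,\al)}[R_Z(Q(\al))]$ (using $\n{Q(\al)_1}=r(\al,\al)$), and applying it to $Q'=Q(\al)|_{I_k}$ — whose underlying quiver is isomorphic to $Q(\pi(I_k))$ — gives $[R_{Z\df}(Q(\al)|_{I_k})]_\vir=q^{-\oh r_k}[R_{Z\df}(Q(\al)|_{I_k})]$, where $r_k:=\n{(Q(\al)|_{I_k})_1}=r_Q(\pi(I_k),\pi(I_k))$. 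So the first step is: multiply the identity of Proposition~\ref{prp:hn deform} by $q^{-\oh r(\al,\al)}$, and check, summand by summand, that the surviving power of $q$ splits off exactly as $\prod_k q^{-\oh r_k}$.

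For the exponent count, fix one decomposition $Q(\al)_0=I_1\dot\cup\dots\dot\cup I_s$ from the sum. Since the $I_k$ partition $Q(\al)_0$ we have $\al=\sum_k\pi(I_k)$, hence
$$r(\al,\al)=\sum_{k,l}r_Q(\pi(I_k),\pi(I_l))=\sum_k r_k+\sum_{k<l}\big(r_Q(\pi(I_k),\pi(I_l))+r_Q(\pi(I_l),\pi(I_k))\big).$$
The point to exploit is that for every decomposition with a nonzero contribution one has $r_Q(\pi(I_k),\pi(I_l))=r_Q(\pi(I_l),\pi(I_k))$, i.e.\ $\ang{\pi(I_k),\pi(I_l)}=0$. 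This is exactly the admissibility hypothesis \eqref{eq:admiss ray} of Definition~\ref{df:ab1} applied to $\pi(I_k),\pi(I_l)$: these satisfy $Z(\pi(I_k)),Z(\pi(I_l))\in l$ (since $\mu_Z(I_k)=\mu_Z(\al)$ and $\al\in Z\inv(l)$) and $f_Z(\pi(I_k)),f_Z(\pi(I_l))\ne0$ (via Lemma~\ref{lm:deform} and $Q(\al)|_{I_k}\iso Q(\pi(I_k))$, whenever the corresponding factor $[R_{Z\df}(Q(\al)|_{I_k})]$ is nonzero — decompositions with a vanishing factor contribute $0$ to both sides and may be discarded); equivalently, it is exactly what the proof of Proposition~\ref{prp:hn deform} already uses when it asserts that the affine-space dimension $\sum_{k<l}r_Q(\pi(I_k),\pi(I_l))$ is independent of the ordering of the $I_k$. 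Granting this, $r(\al,\al)=\sum_k r_k+2\sum_{k<l}r_Q(\pi(I_k),\pi(I_l))$, so
$$-\oh r(\al,\al)+\sum_{k<l}r_Q(\pi(I_k),\pi(I_l))=-\oh\sum_k r_k.$$

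Finally, assemble: multiplying the identity of Proposition~\ref{prp:hn deform} by $q^{-\oh r(\al,\al)}$ turns the left-hand side into $[R_Z(Q(\al))]_\vir$, while the summand indexed by $(I_1,\dots,I_s)$ becomes
$$q^{-\oh r(\al,\al)+\sum_{k<l}r_Q(\pi(I_k),\pi(I_l))}\prod_{k=1}^s[R_{Z\df}(Q(\al)|_{I_k})]=\prod_{k=1}^s q^{-\oh r_k}[R_{Z\df}(Q(\al)|_{I_k})]=\prod_{k=1}^s[R_{Z\df}(Q(\al)|_{I_k})]_\vir$$
by the previous display, which is precisely the asserted formula. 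The one genuinely non-formal point — and the step I expect to need the most care — is the symmetry $r_Q(\pi(I_k),\pi(I_l))=r_Q(\pi(I_l),\pi(I_k))$ on the dimension vectors that actually occur, that is, verifying that \eqref{eq:admiss ray} really does apply to the pairs $\pi(I_k),\pi(I_l)$ (and in particular that a nonzero factor forces $f_Z(\pi(I_k))\ne0$); everything else is the definition of the virtual motive together with the combinatorics already contained in Proposition~\ref{prp:hn deform}.
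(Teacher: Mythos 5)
Your proof is correct and follows the same route as the paper's: divide the motivic identity of Proposition~\ref{prp:hn deform} by $q^{\oh\n{Q(\al)_1}}=q^{\oh r(\al,\al)}$ and use $r(\al,\al)=\sum_k r(\al^k,\al^k)+\sum_{k\ne l}r(\al^k,\al^l)$ together with the symmetry $r(\al^k,\al^l)=r(\al^l,\al^k)$ coming from the admissibility hypothesis \eqref{eq:admiss ray} to absorb the $q^{\sum_{k<l}r_Q(\pi(I_k),\pi(I_l))}$ into the product of $[\,\cdot\,]_\vir$. Your extra care in verifying that \eqref{eq:admiss ray} applies to the pairs $\pi(I_k),\pi(I_l)$ (via Lemma~\ref{lm:deform} forcing $f_Z(\pi(I_k))\ne0$ when the $Z\df$-semistable locus is nonempty, and discarding zero terms) is a valid and somewhat more explicit treatment of a point the paper leaves implicit.
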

\begin{proof}
Let $\al^k=\pi(I_k)$ for $k=1,\dots,s$.
Using the symmetry assumption of Definition \ref{df:ab1} we obtain
$$q^{\oh(\n{Q(\al)_1}-\sum_k\n{Q(\al^k)_1})}=q^{\oh\sum_{k\ne l}r_Q(\al^k,\al^l)}=q^{\sum_{k<l}r_Q( \al^k,\al^l)}.$$
\end{proof}

\begin{theorem}
Under the assumptions of Proposition \ref{prp:hn deform} we have
$$g_Z(\al)
=(\gme)\frac{[R_{Z\df}(Q(\al))]_\vir}{[G(Q(\al))]_\vir}
=[M_{Z\df}(Q(\al))]_\vir.$$
\end{theorem}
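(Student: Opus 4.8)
The plan is to prove the two asserted equalities separately; throughout write $f_{Z\df}(\al)=f_{Z\df}(Q(\al))=[R_{Z\df}(Q(\al))]_\vir/[G(Q(\al))]_\vir$ for the middle quantity. For $(\gme)f_{Z\df}(\al)=[M_{Z\df}(Q(\al))]_\vir$ the argument will be a direct computation with motives. By Lemma~\ref{lm:deform} the stability $Z\df$ is $\one_{Q(\al)}$-generic, so every point of $R_{Z\df}(Q(\al))$ is a stable representation; hence its endomorphism ring is $\bk$ and its stabiliser in $G(Q(\al))=(\bk^*)^{Q(\al)_0}$ is the scalar subgroup $\bk^*$, which acts trivially on $R(Q(\al))$. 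Thus the torus $\bar G=G(Q(\al))/\bk^*\iso(\bk^*)^{\n\al-1}$ acts freely on $R_{Z\df}(Q(\al))$, the quotient $R_{Z\df}(Q(\al))\to M_{Z\df}(Q(\al))$ is a $\bar G$-torsor (so $M_{Z\df}(Q(\al))$ is smooth), and since torus torsors are Zariski-locally trivial, $[R_{Z\df}(Q(\al))]=(q-1)^{\n\al-1}[M_{Z\df}(Q(\al))]$ in $K_0(\Var_\bk)$. As $R_{Z\df}(Q(\al))$ is smooth of dimension $r(\al,\al)$, $\dim M_{Z\df}(Q(\al))=r(\al,\al)-\n\al+1$, and $[G(Q(\al))]_\vir=(\gme)^{\n\al}$, collecting powers of $q$ gives $(\gme)f_{Z\df}(\al)=q^{-\oh\dim M_{Z\df}(Q(\al))}[M_{Z\df}(Q(\al))]=[M_{Z\df}(Q(\al))]_\vir$; Proposition~\ref{purity}, applied to $Q(\al)$, $Z\df$ and $\one_{Q(\al)}$, then shows this motive is polynomial-count with counting polynomial in $\bN[q]$, whence the positivity of $g_Z(\al)$.

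For $g_Z(\al)=(\gme)f_{Z\df}(\al)$ I would induct on $\n\al$. Dividing the identity of the corollary to Proposition~\ref{prp:hn deform} by $[G(Q(\al))]_\vir=\prod_k[G(Q(\al)|_{I_k})]_\vir$ gives
\[
f_Z(\al)=\sum_{\mathcal P}\prod_{I\in\mathcal P}f_{Z\df}(Q(\al)|_I),
\]
the sum being over the unordered set partitions $\mathcal P$ of $Q(\al)_0$ whose every block $I$ satisfies $\mu_Z(\one_I)=\mu_Z(\al)$. On the other hand, put $\hat f(\be)=g_Z(\be)/(\gme)$ when $Z(\be)\in l$ and $\hat f(\be)=0$ otherwise; then the defining equation \eqref{eq:ab1} reads $1+\sum_{Z(\be)\in l}f_Z(\be)x^\be/\be!=\exp\bigl(\sum_\be\hat f(\be)x^\be/\be!\bigr)$, and the several-variable exponential formula \cite[Cor.~5.1.6]{stanley_enumerative2} supplies a second expansion $f_Z(\al)=\sum_{\mathcal P}\prod_{I\in\mathcal P}\hat f(\pi(\one_I))$ over the same index set of partitions.

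Subtracting these two expressions for $f_Z(\al)$, every partition $\mathcal P$ with at least two blocks will contribute zero: for such $\mathcal P$ each block $I$ is a proper subset of $Q(\al)_0$, so $Q(\al)|_I\iso Q(\be)$ with $\be=\pi(\one_I)$ and $\n\be<\n\al$, the restriction of $Z\df$ to $I$ is again a generic deformation of the induced stability $Z$ on $Q(\be)$ in the sense of Lemma~\ref{lm:deform}, and the inductive hypothesis yields $f_{Z\df}(Q(\al)|_I)=g_Z(\be)/(\gme)=\hat f(\pi(\one_I))$. Only the one-block partition $\mathcal P=\{Q(\al)_0\}$ survives, and the subtraction collapses to $0=f_{Z\df}(\al)-\hat f(\al)=f_{Z\df}(\al)-g_Z(\al)/(\gme)$, which is the claim. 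For $\n\al=1$ there are no partitions with two or more blocks, so the same computation settles the base of the induction.

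The point requiring care — and the reason the induction is needed — is that the weights $f_{Z\df}(Q(\al)|_I)$ in the decomposition of $f_Z(\al)$ depend a priori on the subset $I$, not merely on its colour content $\pi(\one_I)$, so that decomposition is not literally an instance of the exponential formula; the induction, together with the routine observation that $Z\df$ restricts to a legitimate deformation on every induced subquiver $Q(\al)|_I$ (it inherits conditions (1) and (2) of Lemma~\ref{lm:deform}, and the remaining ones then follow exactly as in the proof of that lemma), is what lets one replace these weights by $\hat f(\pi(\one_I))$ on all proper blocks and so reduce the comparison to the single top term. The remaining verifications — the power-of-$q$ bookkeeping in the first part, and the identification $Q(\al)|_I\iso Q(\be)$ of quivers with stability — are routine.
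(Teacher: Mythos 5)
Your proof is correct and follows essentially the same route as the paper's: the first equality by induction on $\n\al$ via the corollary to Proposition~\ref{prp:hn deform} together with the observation that $Z\df$ restricted to any admissible block of $Q(\al)_0$ satisfies the conditions of Lemma~\ref{lm:deform}, and the second equality via Zariski-local triviality of the (split) torus torsor $R_{Z\df}(Q(\al))\to M_{Z\df}(Q(\al))$ (Hilbert's Theorem 90). The only superficial differences are the order of the two equalities and your packaging of the inductive step via the exponential formula rather than the paper's direct multinomial accounting, but these amount to the same computation.
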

\begin{proof}
We prove the first equality by induction on $\n\al$. 
For any non-empty proper subset $I\sb Q(\al)_0$ with $\mu_Z(I)=\mu_Z(\al)$, the restriction of $Z\df$ to $Q(\al)|_I$
satisfies the conditions of Lemma \ref{lm:deform}. Therefore we can assume by induction that
$$\frac{[R_{Z\df}(Q(\al)|_I)]_\vir}{[G(Q(\al)|_I)]_\vir}=\frac{g_Z(\pi(I))}{\gme}.$$
Now by the previous corollary, we have
$$\frac{[R_Z(Q(\al))]_\vir}{[G(Q(\al))]_\vir}
=\frac{[R_{Z\df}(Q(\al))]_\vir}{[G(Q(\al))]_\vir}
+\sum_{
\substack{Q(\al)_0=I_1\dot\cup\dots\dot\cup I_s\\s\ge2}}\prod_{k=1}^s\frac{g_Z(\pi(I_k))}{\gme},$$
where the sum ranges over all unordered disjoint decompositions of $Q(\al)_0$ into subsets $I_k$ with $\pi(I_k)\in Z\inv(l)$.
Given an ordered decomposition $\al=\al^1+\ldots+\al^s$ with $\al^k\in Z\inv(l)$, the number of disjoint decompositions $Q(\al)_0=I_1\dot\cup\ldots\dot\cup I_s$ such that $\pi(I_k)=\al^k$ for all $k$ equals the product of multinomial coefficients $\binom\al{\al^1,\ldots,\al^s}$. Therefore
$$\frac{[R_Z(Q(\al))]_\vir}{[G(Q(\al))]_\vir}
=\frac{[R_{Z\df}(Q(\al))]_\vir}{[G(Q(\al))]_\vir}
+\sum_{\substack{\al=\al^1+\dots+\al^s\\s\ge2}}
\frac1{s!}\binom\al{\al^1,\ldots,\al^s}\prod_{k=1}^s\frac{g_Z(\al^k)}{\gme},$$
where the factor $\frac{1}{s!}$ compensates for the fact that the ordering in the disjoint decompositions was disregarded.

Comparing coefficients of $x^\al$ in the defining equation
$$1+\sum_{\al\in Z^{-1}(l)}\frac{[R_Z(Q(\al))]_\vir}{[G(Q(\al))]_\vir}\frac{x^\al}{\al!}
=\exp\bigg(\frac{1}{\gme}\sum_{\al\in Z^{-1}(l)}g_Z(\al)\frac{x^\al}{\al!}\bigg)$$
of the abelian invariants $g_Z(\al)$, we see that
$$\frac{[R_Z(Q(\al))]_\vir}{[G(Q(\al))]_\vir}=\sum_{\al=\al^1+\ldots+\al^s}\frac{1}{s!}
\binom\al{\al^1,\dots,\al^s}
\prod_{k=1}^s\frac{g_Z(\al^k)}\gme.$$
This implies
$$\frac{[R_{Z\df}(Q(\al))]_\vir}{[G(Q(\al))]_\vir}
=\frac{g_Z(\al)}{\gme}.$$

The second equality of the theorem follows from the fact that the principal $G(Q(\al))/\Gm$-fibration $R_{Z\df}(Q(\al))\to M_{Z\df}(Q(\al))$ is Zariski-locally trivial by Hilbert's Theorem 90.
\end{proof}

\begin{corollary}
\label{cr:positivity}
 Under the assumptions of Definition \ref{df:ab1}, we have $g_Z(\al)\in\bN[q^{\pm\oh}]$.
\end{corollary}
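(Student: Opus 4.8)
Since the admissibility arguments of Section~\ref{sec:admissible} already give $g_Z(\al)\in\bZ[q^{\pm\oh}]$, what remains is positivity, and the natural route is the geometric interpretation just established. By the preceding theorem, under the hypotheses of Definition~\ref{df:ab1} we have
$$g_Z(\al)=[M_{Z\df}(Q(\al))]_\vir,$$
where $Z\df$ is the deformation of $Z$ on the quiver $Q(\al)$ furnished by Lemma~\ref{lm:deform}. The plan is simply to apply Proposition~\ref{purity} to the quiver $Q(\al)$, the stability $Z\df$, and the dimension vector $\one_{Q(\al)}$.

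First I would check the genericity hypothesis of Proposition~\ref{purity}: one needs every $Z\df$-semistable abelian representation of $Q(\al)$ to be $Z\df$-stable, and this is precisely assertion~(3) of Lemma~\ref{lm:deform}. Proposition~\ref{purity} then says that $M_{Z\df}(Q(\al))$ is smooth, carries a circle-compact $\Gm$-action, is polynomial-count with counting polynomial $P(q)\in\bN[q]$, and has motive $P(\bL)$. Writing $d=\dim M_{Z\df}(Q(\al))$, it follows that
$$g_Z(\al)=[M_{Z\df}(Q(\al))]_\vir=q^{-d/2}[M_{Z\df}(Q(\al))]=q^{-d/2}P(q)\in\bN[q^{\pm\oh}],$$
since $P$ has nonnegative integer coefficients and the shift by $q^{-d/2}$ merely translates the exponents. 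This is the claim.

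The hard part is entirely contained in the inputs rather than in this deduction: the geometricity theorem identifies $g_Z(\al)$ with a virtual motive in the first place, and Proposition~\ref{purity} --- whose proof runs through the Bialynicki-Birula decomposition to obtain purity and through the polynomial-count criterion for smooth circle-compact varieties to force nonnegativity of the counting polynomial --- supplies the positivity. The only routine point to confirm is that $Q(\al)$ really is a finite quiver and $\one_{Q(\al)}$ a legitimate dimension vector, so that Proposition~\ref{purity} applies verbatim; this is immediate from the construction of $Q(\al)$ in Section~\ref{sec:abelian}.
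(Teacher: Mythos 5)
Your proof is correct and follows the paper's own route: identify $g_Z(\al)$ with $[M_{Z\df}(Q(\al))]_\vir$ via the geometricity theorem, verify genericity of $Z\df$ from Lemma~\ref{lm:deform}, and then invoke Proposition~\ref{purity} to obtain positivity of the counting polynomial. The only cosmetic difference is that the paper separately flags the ``motivic nature of the Harder-Narasimhan recursion'' to justify polynomial-countness before appealing to Proposition~\ref{purity}, whereas you simply read all the needed facts (polynomial-count, motive equals $P(\bL)$, $P\in\bN[q]$) directly off the statement of Proposition~\ref{purity}, which is fine since that proposition already asserts them.
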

\begin{proof}
By the motivic nature of the Harder-Narasimhan recursion, the motive of $M_{Z\df}(Q(\al))$ is a polynomial in $q$, which actually equals the count over finite fields. It follows from Proposition \ref{purity} that this polynomial is in $\bN[q]$. This fact, together with the previous theorem, implies that $g_Z(\al)\in\bN[q^{\pm\oh}]$.
\end{proof}

\begin{remark}
We proved in the above theorem that for any $I,J\sb Q(\al)_0$ with $\pi(I)=\pi(J)$, the moduli spaces $M_{Z\df}(Q(\al)|_I)$ and $M_{Z\df}(Q(\al)|_J)$ have equal motives. We don't claim however that these moduli spaces are isomorphic to each other. Moreover, for different choices of $Z\df$, we get deformed stabilities in many different chambers of stability space and thus possibly many nonisomorphic moduli spaces, but nevertheless the theorem proves that their motives are the same.
\end{remark}

\begin{remark}
The map $M_{Z\df}(Q(\al))\rightarrow M_Z(Q(\al))$ (well-defined by the first lemma of this section) is a desingularization of $M_Z(Q(\al))$. It would be interesting to study its geometry in more detail.
\end{remark}

\begin{remark} Combining the theorem in this section with the main result of section 4, we get an explicit formula for the motive of $M_{\mathrm{triv}\df}(Q(\al))$ in terms of spanning trees.
\end{remark}

\section{Indecomposable abelian representations}\label{sec:indec}
Let $Q$ be a quiver and let $M$ be an abelian representation of $Q$. Define the support quiver $Q_M$ of $M$ as the subquiver of $Q$ with the same set of vertices, and arrows $a\in Q_1$ whenever $M_a\ne0$ (that is, the linear map between one-dimensional vector spaces representing the arrow is non-zero). In general, we call a subquiver $G\sb Q$ spanning if $G_0=Q_0$. By associating to an abelian representation $M$ its support quiver and the scalars representing the non-zero arrows, we get immediately:

\begin{lemma}\label{mrl1}
There is a bijection between points in $R(Q)$ and tuples consisting of a spanning subquiver $G\sb Q$ and a choice of a non-zero scalar in ${\bk}$ for every arrow in $G$.
\end{lemma}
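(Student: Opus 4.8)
The plan is to exhibit the claimed bijection directly, by splitting the arrow data of an abelian representation according to whether the scalar representing an arrow vanishes. Recall from the discussion preceding the lemma that $R(Q)=R(Q,\one)=\bk^{Q_1}$, so a point $M\in R(Q)$ is nothing but a tuple $(M_a)_{a\in Q_1}$ with $M_a\in\bk$. First I would send such an $M$ to the pair consisting of its support quiver $Q_M\sb Q$ — which by definition has vertex set $Q_0$ and arrow set $\set{a\in Q_1:M_a\ne0}$ — together with the tuple of retained scalars $(M_a)_{a\in(Q_M)_1}$. Since $(Q_M)_0=Q_0$ by construction, $Q_M$ is spanning, and each retained scalar is nonzero by the very definition of the support quiver, so this pair lies in the target set.

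For the inverse map, given a spanning subquiver $G\sb Q$ together with a choice of a nonzero scalar $c_a\in\bk$ for every $a\in G_1$, I would define $M\in R(Q)=\bk^{Q_1}$ by $M_a=c_a$ for $a\in G_1$ and $M_a=0$ for $a\in Q_1\ms G_1$. It remains to check that the two round trips are the identity: applying the first map to this $M$ recovers $G$ as the support quiver precisely because all the $c_a$ are nonzero (so no arrow of $G$ is lost) and the complementary arrows carry the scalar $0$ (so no arrow is added), and it recovers the scalars $c_a$; conversely, starting from $M\in R(Q)$, forming $(Q_M,(M_a)_a)$ and then zero-extending returns $M$ itself. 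These verifications are immediate, and there is no genuine obstacle — the only point deserving a word is the observation that the support quiver of the zero-extension of a nowhere-vanishing tuple on $G_1$ is exactly $G$.
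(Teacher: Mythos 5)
Your proposal is correct and is exactly the argument the paper has in mind: the paper asserts the bijection is immediate from associating to $M$ its support quiver and retained nonzero scalars, which is precisely the forward map you spell out, with the zero-extension as inverse. You have merely written out the straightforward verification that the paper leaves implicit.
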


Also the proof of the following lemma is immediate:

\begin{lemma}\label{mrl2}
$M$ is indecomposable if and only if $Q_M$ is connected. In this case, $M$ is absolutely indecomposable and Schurian, that is, $\End(M)=\bk$.
\end{lemma}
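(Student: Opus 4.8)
The plan is to establish the three claims in sequence, each following quickly from elementary representation theory of quivers together with Lemma \ref{mrl1}. First I would prove the equivalence "$M$ indecomposable $\iff$ $Q_M$ connected". For the forward direction, suppose $Q_M$ is disconnected, say $Q_M = G' \sqcup G''$ as a disjoint union of nonempty spanning subquivers on complementary vertex sets $Q_0 = Q_0' \sqcup Q_0''$ with no arrow of $Q_M$ between the two parts. Then setting $M' = M|_{Q_0'}$ and $M'' = M|_{Q_0''}$ (the sub-representations supported on $Q_0'$ and $Q_0''$ respectively, with all structure maps inherited) gives a nontrivial direct sum decomposition $M \cong M' \oplus M''$, since every arrow $a$ of $Q$ with $M_a \neq 0$ lies in $Q_M$ and hence has both endpoints in the same part. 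Conversely, if $Q_M$ is connected, I would show any decomposition $M = M' \oplus M''$ is trivial: for each vertex $i$, the one-dimensional space $M_i = M_i' \oplus M_i''$ forces exactly one summand to be nonzero at $i$, partitioning $Q_0$ into the support of $M'$ and the support of $M''$; any arrow $a: i \to j$ with $M_a \neq 0$ must respect this partition (it maps $M_i'$ into $M_j'$ and $M_i''$ into $M_j''$), so no arrow of $Q_M$ crosses between the two parts, contradicting connectedness of $Q_M$ unless one part is empty.

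Next I would address the "Schurian" claim, $\End(M) = \bk$. An endomorphism $\vi: M \to M$ is a tuple of scalars $\vi_i \in \bk = \End(M_i)$ (as each $M_i$ is one-dimensional) satisfying $\vi_j M_a = M_a \vi_i$ for every arrow $a: i \to j$. For arrows $a$ in $Q_M$ we have $M_a \neq 0$, hence $\vi_i = \vi_j$; since $Q_M$ is connected and spanning, all $\vi_i$ are equal to a common scalar $\lambda$, so $\vi = \lambda \cdot \mathrm{id}_M$. Thus $\End(M) \cong \bk$. Finally, "absolutely indecomposable" means $M \ts_\bk \bar\bk$ is indecomposable over $\bar\bk$ (equivalently over any field extension); this is immediate since the support quiver of $M \ts_\bk \bar\bk$ is again $Q_M$ (extending scalars does not make a nonzero map zero), which remains connected, so the already-proven equivalence applies. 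Alternatively, absolute indecomposability follows formally from $\End(M) = \bk$ being a field on which every extension acts trivially.

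I do not anticipate any genuine obstacle: all three statements reduce, via the concrete description of abelian representations by spanning subquivers and scalars, to the combinatorial fact that a vertex partition with no crossing "active" arrow is exactly a disconnection of $Q_M$. The only point requiring a modicum of care is bookkeeping in the decomposition argument — checking that the candidate summands $M'$, $M''$ are genuinely sub-representations (closed under all structure maps, not just those in $Q_M$), which holds precisely because every arrow of $Q$ acting nontrivially lies in $Q_M$ and hence preserves the partition. This is why the paper remarks that the proof "is immediate".
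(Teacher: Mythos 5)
Your proof is correct and supplies exactly the routine verification that the paper dismisses with the phrase ``the proof of the following lemma is immediate'': the bijection of Lemma \ref{mrl1} identifies decompositions $M=M'\oplus M''$ with vertex partitions of $Q_0$ that no arrow of $Q_M$ crosses, which is precisely a disconnection of $Q_M$, and the Schurian/absolutely-indecomposable assertions follow from connectedness of $Q_M$ in the way you describe. Since the paper omits the argument entirely, there is nothing to compare against; your write-up is a faithful expansion of the intended one-line justification.
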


\begin{definition}
Let $Q$ be a quiver and $Z:\Ga(Q)\to\bC$ be a stability function.
\begin{enumerate}
\item As before, for any subset $I\sb Q_0$, we define $\one_I=\sum_{i\in I}i\in \bZ^{Q_0}$ and $\mu_Z(I)=\mu_Z(\one_I)$. 
\item A subset $I\sb Q_0$ is called $Q$-closed if there are no arrows in $Q$ from $I$ to $Q_0\ms I$.
\item The quiver $Q$ is called $Z$-semistable (resp.~$Z$-stable) if for any proper $Q$-closed subset $I\sb Q_0$ we have $\mu_Z(I)\le\mu_Z(Q_0)$ (resp.~$\mu_Z(I)<\mu_Z(Q_0)$).
\item The set of all spanning $Z$-semistable subquivers $G\sb Q$ is denoted by $\cG_Z(Q)$. The subset of connected $G\in\cG_Z(Q)$ is denoted by $\cC_Z(Q)$.
\end{enumerate}
\end{definition}

Using this terminology, the following is immediate:

\begin{lemma}\label{mrl3}
An abelian representation $M$ of $Q$ is (semi)stable if and only if its support quiver $Q_M$ is (semi)stable.
\end{lemma}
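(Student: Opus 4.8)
The plan is to unwind the definition of (semi)stability of the abelian representation $M$ into a condition on subsets of $Q_0$, and then to recognize that condition as the definition of (semi)stability of the support quiver $Q_M$.

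The first step is the combinatorial identification of subrepresentations with closed subsets. As noted in Section~\ref{geometricity}, the subrepresentations of an abelian representation $M$ of $Q$ are precisely the subrepresentations $M_I$ supported on the $M$-closed subsets $I\subseteq Q_0$: since $\dim M_i=1$ for all $i$, a subrepresentation $N\subseteq M$ is determined by its support $I=\{i\in Q_0:N_i\ne0\}$; the condition $M_a(N_i)\subseteq N_j$ for every arrow $a:i\to j$ forces $I$ to be $M$-closed; and conversely every $M$-closed $I$ yields a subrepresentation $M_I$ with $\udim M_I=\one_I$. In particular the proper nonzero subrepresentations of $M$ correspond to the proper nonempty $M$-closed subsets of $Q_0$. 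Moreover, by the very definition of the support quiver, a subset $I\subseteq Q_0$ is $M$-closed if and only if it is $Q_M$-closed: the arrows of $Q_M$ are exactly the arrows $a$ of $Q$ with $M_a\ne0$, so requiring $j\in I$ whenever $i\in I$ and some arrow $i\to j$ is nonzero on $M$ is the same as requiring that no arrow of $Q_M$ leaves $I$.

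The final step is to read off the equivalence. By definition $M$ is semistable if and only if $\udim N\preceq_Z\udim M$ for every proper nonzero subrepresentation $N$; by the previous step this says $\mu_Z(\one_I)\le\mu_Z(\one_{Q_0})$ for every proper nonempty $M$-closed $I$, equivalently $\mu_Z(I)\le\mu_Z(Q_0)$ for every proper nonempty $Q_M$-closed $I$, which is exactly the statement that $Q_M$ is $Z$-semistable. Replacing $\le$ by $<$ throughout handles the stable case verbatim. There is essentially no obstacle here; the only point that genuinely uses the abelian hypothesis, rather than being a formal manipulation, is the identification in the first step of subrepresentations of $M$ with $M$-closed subsets of $Q_0$, and this is immediate from $\dim M_i = 1$.
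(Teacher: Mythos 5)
Your proof is correct and is precisely the unwinding that the paper has in mind when it labels the lemma ``immediate'': identify proper nonzero subrepresentations of the abelian $M$ with proper nonempty $M$-closed subsets of $Q_0$, observe that $M$-closed and $Q_M$-closed coincide by definition of the support quiver, and then the two (semi)stability conditions become verbatim the same inequality on slopes. Nothing to add.
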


\begin{remark}
A stability $Z$ is $\one$-generic (\ie every representation in $R_Z(Q)$ is stable) if and only if $\cG_Z(Q)=\cC_Z(Q)$.
\end{remark}

The above lemma yields a bijection between the points of the semistable locus $R_Z(Q)$ in $R(Q)$ and tuples consisting of a subquiver $G\in\cG_Z(Q)$ together with a choice of a non-zero scalar in $\bk$ for every arrow in $G$. Similarly, there is a bijection between the points in the indecomposable semistable locus $R^\ind_Z(Q)$ in $R(Q)$ and tuples consisting of a subquiver $G\in\cC_Z(Q)$ together with a choice of a non-zero scalar in $\bk$ for every arrow in $G$.
This allows us to calculate
\begin{equation}
\frac{\nnn{R_Z(Q)}}{\nnn{G(Q)}}
=\sum_{G\in\cG_Z(Q)}(q-1)^{\n{G_1}-\n{G_0}}
\label{eq:st1}
\end{equation}
and
\begin{equation}
\frac{\nnn{R^\ind_Z(Q)}}{\nnn{G(Q)}}
=\sum_{G\in\cC_Z(Q)}(q-1)^{\n{G_1}-\n{G_0}}.
\label{eq:st2}
\end{equation}
We define $a_Z(Q):=(q-1)\frac{\nnn{R^\ind_Z(Q)}}{\nnn{G(Q)}}$.

\begin{remark}
We can interpret $a_Z(Q)$ as a polynomial in $q$. Given a field $\bk$, let $M_Z^\ind(Q)(\bk)$ be the set of isomorphism classes of abelian indecomposable $Z$-semistable $Q$-representations over $\bk$. Then for any finite field $\bF_q$ we have $a_Z(Q)(q)=\n{M_Z^\ind(Q)(\bF_q)}$.
\end{remark}

\begin{definition}
For any quiver (or graph) $G$ we define $\ka(G)$ to be the number of connected components of $G$ and we define the nullity of $G$ to be $n(G)=\n{G_1}-\n{G_0}+\ka(G)$. We always have $n(G)\ge0$ and $n(G)=0$ if and only if $G$ is a forest.
\end{definition}

Using this notation we can write
\begin{equation}
a_Z(Q)=\sum_{G\in\cC_Z(Q)}(q-1)^{n(G)}.
\end{equation}
Applying the exponential formula as in \cite[Corollary 5.1.6]{stanley_enumerative2} and using formulas \eqref{eq:st1} and \eqref{eq:st2}, we get:

\begin{theorem}
For any ray $l\in\bH_+$, we have
\begin{equation}
1+\sum_{Z(\al)\in l}\frac{\nnn{R_Z(Q(\al))}}{\nnn{G(Q(\al))}}\frac{x^\al}{\al!}
=\exp\bigg(\sum_{Z(\al)\in l}\frac{\nnn{R_Z^\ind(Q(\al))}}{\nnn{G(Q(\al))}}\frac{x^\al}{\al!}\bigg).
\label{eq:exp}
\end{equation}
\end{theorem}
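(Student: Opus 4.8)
The plan is to recognize \eqref{eq:exp} as a direct instance of the exponential formula, applied colour by colour over the dimension-vector stratification. First I would fix a ray $l\sb\bH_+$ and work entirely within the subset $Z\inv(l)\cap\Ga_+(Q)$ of dimension vectors; for such $\al$ the quiver $Q(\al)$ carries the induced stability, and by Lemma \ref{mrl3} a point of $R_Z(Q(\al))$ is the same as a spanning $Z$-semistable subquiver $G\sb Q(\al)$ together with a nonzero scalar on each arrow of $G$, while a point of $R^\ind_Z(Q(\al))$ is the same datum with $G$ additionally connected (Lemma \ref{mrl2}). This is exactly the bijective content recorded in \eqref{eq:st1} and \eqref{eq:st2}, so the two sides of \eqref{eq:exp} are the generating series, weighted by $x^\al/\al!$, of $\sum_{G\in\cG_Z(Q(\al))}(q-1)^{\n{G_1}-\n{G_0}}$ and $\sum_{G\in\cC_Z(Q(\al))}(q-1)^{\n{G_1}-\n{G_0}}$ respectively.

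Next I would set up the colouring. The vertex set $Q(\al)_0$ consists of the vertices $i_k$ with $i\in Q_0$, $1\le k\le\al_i$; colouring $i_k$ by $i\in Q_0$ makes $Q(\al)_0$ a $Q_0$-coloured set with $\al_i$ vertices of colour $i$. A spanning subquiver of $Q(\al)$ is, by construction of $Q(\al)$, nothing but a graph (with possible multiple edges and loops, and with a direction on each edge that is determined by the colours since all arrows $i_k\to j_{k'}$ come from arrows $i\to j$ in $Q$) on this coloured vertex set whose edge-multiplicities between colours $i$ and $j$ are bounded by $r_Q(i,j)$; and the semistability condition $G\in\cG_Z(Q(\al))$ depends only on the underlying graph. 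Thus, up to bookkeeping of the $(q-1)^{\n{G_1}-\n{G_0}}$ weights, the objects being counted on the two sides of \eqref{eq:exp} are ``all $Z$-semistable graphs'' versus ``connected $Z$-semistable graphs'' on a coloured vertex set, and I would invoke the species/exponential formula \cite[Corollary 5.1.6]{stanley_enumerative2} in the multivariate form already used in Section \ref{sec:gesselwang} (compare equation \eqref{zt}): the exponential generating function of a family of structures equals the exponential of that of its connected members.

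The one point that needs care — and which I expect to be the only real obstacle — is verifying that ``$Z$-semistability'' is the right kind of property for the exponential formula to apply, i.e.\ that a spanning subquiver $G\sb Q(\al)$ is $Z$-semistable if and only if each of its connected components $G^{(r)}\sb Q(\al)|_{I_r}$ is $Z$-semistable, where $Q(\al)_0=I_1\dot\cup\cdots\dot\cup I_t$ is the partition into the vertex sets of the components. This requires two observations: that all $I_r$ have $\mu_Z(I_r)=\mu_Z(\al)$ when $G$ is semistable — which uses the numerical hypothesis built into the ray $l$, namely that the relevant dimension vectors all have slope $\mu_Z(l)$, so that $\mu_Z(\pi(I_r))=\mu_Z(l)=\mu_Z(\al)$ is forced — and, conversely, that a disjoint union of semistable pieces of equal slope is again semistable, which is the standard ``extensions of semistables of the same slope are semistable'' fact specialized to the split situation. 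Granting this, the weight is multiplicative, $(q-1)^{\n{G_1}-\n{G_0}}=\prod_r(q-1)^{\n{G^{(r)}_1}-\n{G^{(r)}_0}}$ since $\n{G_1}=\sum_r\n{G^{(r)}_1}$ and $\n{G_0}=\sum_r\n{G^{(r)}_0}$, so the hypotheses of the exponential formula are met and \eqref{eq:exp} follows. Finally I would note that the $\al!$ in the denominators is exactly the symmetry factor $\prod_i\al_i!$ of the coloured vertex set, which is what makes the generating functions the correct exponential ones rather than ordinary ones.
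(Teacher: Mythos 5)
Your proposal is correct and follows the same route as the paper, which itself simply invokes the exponential formula together with the identities \eqref{eq:st1} and \eqref{eq:st2}; you supply the details the paper omits. One small correction to the justification of your key verification step. You attribute the equality $\mu_Z(I_r)=\mu_Z(\al)$ for the connected components to ``the numerical hypothesis built into the ray $l$,'' but the theorem holds for an arbitrary ray $l$ with no extra hypothesis, and in any case that reasoning would be circular. The correct reason is more elementary: if $G\in\cG_Z(Q(\al))$ has a connected component with vertex set $I_r$, then both $I_r$ and its complement $Q(\al)_0\ms I_r$ are $G$-closed (no arrows of $G$ cross between distinct components), so $I_r$ is simultaneously the support of a subrepresentation and of a quotient. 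Semistability then gives $\mu_Z(I_r)\le\mu_Z(\al)$ and $\mu_Z(I_r)\ge\mu_Z(\al)$, hence equality. The same observation shows each component $G^{(r)}$ is itself semistable on $Q(\al)|_{I_r}$: any $G^{(r)}$-closed $J\sb I_r$ is also $G$-closed, whence $\mu_Z(J)\le\mu_Z(\al)=\mu_Z(I_r)$. Your converse direction (a disjoint union of semistable pieces of equal slope is semistable) and the multiplicativity of the weight $(q-1)^{\n{G_1}-\n{G_0}}$ are as stated, so the exponential formula applies.
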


\begin{corollary}
Assume that a ray $l\sb\bH_+$ satisfies the assumptions of Definition \ref{df:ab1}. 
Then we have
\begin{equation*}
T_{r}\exp\bigg(\sum_{Z(\al)\in l}\frac{g_Z(\al)}{q^\oh-q^{-\oh}}\frac{x^\al}{\al!}\bigg)
=\exp\bigg(\sum_{Z(\al)\in l}q^{\oh\n\al}\frac{a_Z(Q(\al))}{q-1}\frac{x^\al}{\al!}\bigg).
\end{equation*}
In particular, $a_Z(Q(\al))\in\bN[q]$ by Corollary \ref{cr:positivity} and Theorem \ref{th:2}.
\end{corollary}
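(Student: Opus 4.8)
The plan is to deduce the displayed identity by a purely formal computation from Definition~\ref{df:ab1}, the exponential formula \eqref{eq:exp} of the preceding theorem, and the defining formula for $f_Z$, and then to obtain $a_Z(Q(\al))\in\bN[q]$ by combining Corollary~\ref{cr:positivity} with Theorem~\ref{th:2}.

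\emph{The identity.} By \eqref{eq:ab1} the series $\exp\bigl(\sum_{Z(\al)\in l}\tfrac{g_Z(\al)}{\gme}\tfrac{x^\al}{\al!}\bigr)$ is exactly $1+\sum_{Z(\al)\in l}f_Z(\al)\tfrac{x^\al}{\al!}$. Since $T_r$ (with $r=r_Q$; only the quadratic form $\al\mto r_Q(\al,\al)$ enters, so symmetry of $r_Q$ is irrelevant for $T_r$) is linear with $T_r(x^\al)=q^{\oh r_Q(\al,\al)}x^\al$, the left-hand side of the corollary equals $1+\sum_{Z(\al)\in l}q^{\oh r_Q(\al,\al)}f_Z(\al)\tfrac{x^\al}{\al!}$. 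Using $\n{Q(\al)_0}=\n\al$, $\n{Q(\al)_1}=r_Q(\al,\al)$ and $[G(Q(\al))]=(q-1)^{\n\al}$, the definition of $f_Z$ gives $q^{\oh r_Q(\al,\al)}f_Z(\al)=q^{\oh\n\al}\,[R_Z(Q(\al))]/[G(Q(\al))]$; thus the left-hand side is the image of $1+\sum_{Z(\al)\in l}\tfrac{[R_Z(Q(\al))]}{[G(Q(\al))]}\tfrac{x^\al}{\al!}$ under the substitution $x_i\mto q^\oh x_i$. Applying that substitution to both sides of \eqref{eq:exp} and using $[R^\ind_Z(Q(\al))]/[G(Q(\al))]=a_Z(Q(\al))/(q-1)$ produces precisely the right-hand side. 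The numerical hypothesis \eqref{eq:admiss ray} plays no role here; it is needed only for $g_Z$ to be defined and for the positivity statement.

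\emph{Positivity.} Rewrite $\sum_{Z(\al)\in l}\tfrac{g_Z(\al)}{\gme}\tfrac{x^\al}{\al!}=\tfrac1{q-1}\sum_{Z(\al)\in l}q^\oh g_Z(\al)\tfrac{x^\al}{\al!}$, so the left-hand side of the corollary has the shape $T_{r_Q}\exp\bigl(\tfrac1{q-1}\sum c_\al\tfrac{x^\al}{\al!}\bigr)$ to which Theorem~\ref{th:2} applies, with $c_\al=q^\oh g_Z(\al)$. Both hypotheses hold: $r_Q(\al,\be)=\sum_{(a:i\to j)}\al_i\be_j\ge0$ for effective $\al,\be$, and $r_Q$ is symmetric on all dimension vectors occurring in the expansion — these are sums of elements of $\{\al:Z(\al)\in l,\ f_Z(\al)\ne0\}$, on which $\ang{-,-}$ vanishes by \eqref{eq:admiss ray}, so $r_Q(\al,\be)=r_Q(\be,\al)$ there by bilinearity; moreover $c_\al\in\bN[q^{\pm\oh}]$ by Corollary~\ref{cr:positivity}. (The proof of Theorem~\ref{th:2} gives the same conclusion when the input coefficients are only assumed to lie in $\bN[q^{\pm\oh}]$, the output then also lying in $\bN[q^{\pm\oh}]$.) Hence the coefficients of the right-hand side of the corollary, which by the identity are $q^{\oh\n\al}a_Z(Q(\al))$, lie in $\bN[q^{\pm\oh}]$. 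On the other hand $a_Z(Q(\al))=\sum_{G\in\cC_Z(Q(\al))}(q-1)^{\n{G_1}-\n{G_0}+1}$ by \eqref{eq:st2}, and $\n{G_1}\ge\n{G_0}-1$ for connected spanning $G$, so $a_Z(Q(\al))\in\bZ[q]$; an element of $q^{\oh\n\al}\bZ[q]$ that lies in $\bN[q^{\pm\oh}]$ must lie in $q^{\oh\n\al}\bN[q]$, whence $a_Z(Q(\al))\in\bN[q]$.

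\emph{Main obstacle.} The derivation of the identity is entirely formal; the substance is in the positivity step, and within it in two bookkeeping points: (i) verifying that the symmetry hypothesis of Theorem~\ref{th:2} genuinely holds on the index set that occurs in the expansion — this is the only place where \eqref{eq:admiss ray} is used — and (ii) the tracking of half-integer powers, since Corollary~\ref{cr:positivity} only yields $g_Z(\al)\in\bN[q^{\pm\oh}]$ rather than $\bN[q^{\oh}]$, so that Theorem~\ref{th:2} by itself gives $q^{\oh\n\al}a_Z(Q(\al))\in\bN[q^{\pm\oh}]$ and one must invoke the a priori integrality in $q$ of $a_Z(Q(\al))$ from \eqref{eq:st2} to reach $a_Z(Q(\al))\in\bN[q]$.
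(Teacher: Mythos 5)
Your derivation of the identity is the paper's: unwind \eqref{eq:ab1}, note that $q^{\oh r(\al,\al)}f_Z(\al)=q^{\oh\n\al}[R_Z(Q(\al))]/[G(Q(\al))]$, and apply the exponential formula \eqref{eq:exp} together with $a_Z(Q)=(q-1)[R^\ind_Z(Q)]/[G(Q)]$. For positivity the paper merely cites Corollary~\ref{cr:positivity} and Theorem~\ref{th:2}, and you are right to fill in the two points that citation glosses over: the symmetry hypothesis of Theorem~\ref{th:2} need only hold on the sub-semigroup supporting the series, where it follows from \eqref{eq:admiss ray}; and since $g_Z(\al)\in\bN[q^{\pm\oh}]$ rather than $\bN[q^\oh]$, the theorem gives only $q^{\oh\n\al}a_Z(Q(\al))\in\bN[q^{\pm\oh}]$, which must be combined with the manifest integrality $a_Z(Q(\al))\in\bZ[q]$ from \eqref{eq:st2} to reach $\bN[q]$.
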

\begin{proof}
By the defining equation of abelian quiver invariants, we have
\begin{multline*}
T_{r}\exp\bigg(\sum_{Z(\al)\in l}\frac{g_Z(\al)}{q^\oh-q^{-\oh}}\frac{x^\al}{\al!}\bigg)
=T_r\bigg(1+\sum_{Z(\al)\in l} \frac{\nnn{R_Z(Q(\al))}_\vir}{\nnn{G(Q(\al))}_\vir}
\frac{x^\al}{\al!}\bigg)\\
=1+\sum_{Z(\al)\in l} q^{\oh\n\al}\frac{\nnn{R_Z(Q(\al))}}{\nnn{G(Q(\al))}}
\frac{x^\al}{\al!}.
\end{multline*}
This expression is equal, by the previous theorem, to
$$\exp\bigg(\sum_{Z(\al)\in l}q^{\oh\n\al}
\frac{\nnn{R_Z^\ind(Q(\al))}}{\nnn{G(Q(\al))}}\frac{x^\al}{\al!}\bigg)
=\exp\bigg(\sum_{Z(\al)\in l}q^{\oh\n\al}
\frac{\nnn{M_Z^\ind(Q(\al))}}{q-1}\frac{x^\al}{\al!}\bigg)
$$
\end{proof}

\begin{remark}
If $Q$ is a symmetric quiver then for the trivial stability the value of $g_\triv(\al)$ at $q^\oh=1$ equals the number of spanning trees of $\ub Q(\al)$ (see Corollary \ref{triv_special}). On the other hand the value of $a_\triv(Q(\al))$ at $q=1$ equals the number of spanning trees of $Q(\al)$.
\end{remark}

We will see later that $a_Z(Q)\in\bN[q]$ without the assumptions of Definition \ref{df:ab1}. 
If $Z$ is a trivial stability, then there are two classical interpretations of the polynomial $a_Z(Q)$.

\begin{remark}
For a trivial stability $Z$ the set $\cC_Z(Q)$ coincides with the set $\cC(Q)$ of all connected spanning subgraphs of $Q$. For any graph $Q$ one defines its Tutte polynomial \cite{bollobas_modern} by
$$T(Q;t,q)=\sum_{G\in\cG(Q)}(t-1)^{\ka(G)-\ka(Q)}(q-1)^{n(G)}.$$
It is known that $T(Q;t,q)\in\bN[t,q]$. If $Q$ is connected, then
$$T(Q;1,q)=\sum_{G\in\cC(Q)}(q-1)^{n(G)}=a_Z(Q),$$
so $a_Z(Q)\in\bN[q]$.
\end{remark}

\begin{remark}
For a trivial stability $Z$ the set $M^\ind_Z(Q)(\bk)$ coincides with the set $M^\ind(Q,\one)(\bk)$ of isomorphism classes of (absolutely) indecomposable $Q$-represen\-tations over \bk having dimension vector \one. Therefore $a_Z(Q)\in\bN[q]$ by the Kac conjecture proved by Crawley-Boevey and Van den Bergh \cite{crawley-boevey_absolutely} in the case of indivisible dimension vectors (in particular, for the dimension vector $\one\in\bZ^{Q_0}$) and by Hausel, Letellier, and Rodriguez-Villegas \cite{hausel_positivity} in general.
\end{remark}

\begin{lemma}
If $Q$ is connected then there exists a stability function $Z:\Ga(Q)\to\bC$ such that $Q$ is $Z$-stable.
\end{lemma}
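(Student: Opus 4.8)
The plan is to construct $Z$ directly from a spanning tree of $Q$, exploiting the orientation of the tree away from a chosen root. First I would pick a vertex $i_0 \in Q_0$ to serve as a root; since $Q$ is connected (in the underlying-graph sense), I can choose a spanning tree $T$ of $Q$ and orient every edge of $T$ away from $i_0$. The idea is to assign to each vertex $i$ a ``level'' $\ell(i) \in \bN$ equal to the distance from $i_0$ to $i$ in $T$, so that $\ell(i_0)=0$ and, along each tree edge, the level increases by exactly $1$ from parent to child. Then I would try a stability of the form $Z(i) = -\ell(i) + i r(i)$ for suitable positive reals $r(i)$, perhaps simply $r(i) = 1$ for all $i$, so that $\mu_Z(i) = -\ell(i)$ and, more generally, $\mu_Z(I) = -\frac{1}{|I|}\sum_{i \in I}\ell(i)$ for a subset $I \subset Q_0$.

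The key step is then to verify that every proper $Q$-closed subset $I \subsetneq Q_0$ has $\mu_Z(I) < \mu_Z(Q_0)$. Recall $I$ is $Q$-closed means there are no arrows of $Q$ from $I$ to $Q_0 \setminus I$. Since $T \subseteq Q$, in particular there are no tree edges from $I$ to its complement. But a proper nonempty subset that is closed under ``no outgoing tree edges'' and contains the root would have to be all of $Q_0$ by connectedness of $T$; the only way to avoid this is for $I$ \emph{not} to contain $i_0$. So $i_0 \notin I$, hence $\ell(i) \ge 1$ for all $i \in I$ — but that alone is not quite enough, since $Q_0$ also contains vertices of level $\ge 1$. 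Here I would refine the construction: instead of raw tree-distance, use a \emph{strictly decreasing} injective weight along the tree. Concretely, order $Q_0$ as $i_0, i_1, \dots, i_{N}$ by a depth-first (or breadth-first) traversal of $T$, and set $\ell(i_k) = k$; then require that each non-root vertex has strictly larger weight than its $T$-parent (automatic for BFS order, and arrangeable for DFS). The crucial combinatorial fact to extract is: if $I$ is $T$-closed (no tree edges leaving $I$) and $I \ne Q_0$, then $i_0 \notin I$; and then I would choose the weights growing fast enough (e.g. $\ell(i_k) = 2^k$, or simply argue with the average directly) that omitting the minimal weight $0$ while possibly including large weights still pushes the average of $I$ strictly below the average over all of $Q_0$. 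This last inequality $\frac{1}{|I|}\sum_{i\in I}\ell(i) > \frac{1}{N+1}\sum_{i=0}^{N}\ell(i)$ whenever $0 \notin \{\ell(i): i\in I\}$ is elementary: the full average includes the term $0$ and every term of $I$, weighted down — one checks it by comparing $(N+1)\sum_{i \in I}\ell(i)$ with $|I|\sum_{k=0}^N \ell(i_k)$ and using that $I$ misses the value $0$.

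The main obstacle is precisely this averaging inequality: $Q$-closedness only controls the tree edges touching $I$, so I cannot assume $I$ is small — it could be almost all of $Q_0$ and still omit the root. The cleanest fix is to abandon averages of a fixed weight and instead choose the stability so that the real parts behave like an injective, very-rapidly-growing function of the traversal order; but since $\mu_Z$ is a \emph{ratio} $d(\al)/r(\al)$, one must also control the imaginary parts $r(i)$. I expect the smoothest route is: take $r(i) = 1$ for all $i$, so $\mu_Z(I)$ is literally the average of the $-\ell(i)$ over $i \in I$, pick $\ell$ to be a BFS-order enumeration (so the parent of $i_k$ has index $< k$, hence the $T$-closed condition forces $i_0 \in I$ or $I = Q_0$ is \emph{not} immediate — rather, $T$-closed forces: if $i_k \in I$ and $i_k$ has a $T$-child, no constraint, but if $i_k \notin I$ then... hmm). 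To make the implication ``$I$ is $T$-closed, $I \ne Q_0$ $\Rightarrow$ $i_0 \notin I$'' actually hold, I should orient $T$ \emph{toward} the root instead, or equivalently note: $Q$-closed means no arrows \emph{out} of $I$; orient each tree edge from child to parent; then if $i_0 \in I$, every vertex reaches $i_0$ along $T$, but the last edge before entering $I$ would be an arrow out of $Q_0 \setminus I$ into $I$, which is allowed — so this still needs care. I would resolve this by orienting $T$'s edges child$\to$parent and using the dual notion, or simply by first replacing $Q$ with the quiver having $T$'s edges oriented \emph{into} the root: then a proper $Q$-closed (no outgoing arrows) set cannot contain $i_0$ unless it is everything, because from any $i \in I$ the tree path to $i_0$ stays inside $I$ by closedness, forcing $i_0 \in I$, and then repeating for complements yields $I = Q_0$. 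Once that structural point is pinned down, the rest — choosing positive imaginary parts and a real-part weighting making $\mu_Z(I) < \mu_Z(Q_0)$ strictly for all the finitely many proper $Q$-closed $I$ — is a routine perturbation/genericity argument in the spirit of Lemma~\ref{lm:deform}.
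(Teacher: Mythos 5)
Your proposal is not complete, and the key analytic step you describe as ``elementary'' is in fact false. You reduce to a spanning tree $T$, define $\ell$ by a BFS/DFS traversal, and claim that if $I$ avoids the root then the average of $\ell$ over $I$ compares to the global average in the required direction. Take $Q$ a star quiver: one source $i_0$ with $N$ arrows to leaves $i_1,\dots,i_N$. Every nonempty subset of the leaves is $Q$-closed. With $\ell(i_k)=k$, the proper closed set $I=\{i_1\}$ has $\ell$-average $1$, while the global average is $N/2$; with either sign convention for $\mu_Z$, the strict inequality $\mu_Z(I)<\mu_Z(Q_0)$ fails for $N\ge 2$. Your suggested fix of ``rapidly growing'' injective weights does not help --- the same singleton $I=\{i_1\}$ (minimal positive weight) still violates the inequality even for $\ell(i_k)=2^k$ --- because the obstruction is not the growth rate of $\ell$ but the fact that a $Q$-closed set can be a single leaf with \emph{any} prescribed level. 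There is a second gap that you gesture at but do not resolve: the orientation of $T$ is inherited from $Q$ and need not make $T$ an out-tree from any root (e.g.\ $i_1\to i_0\leftarrow i_2$ is an in-tree from every root), so the structural claim ``$I$ proper $Q$-closed $\Rightarrow i_0\notin I$'' need not hold for your chosen $i_0$; in that example \emph{every} vertex lies in some proper $Q$-closed set. Reorienting $T$ changes the quiver and hence the statement being proved, unless one invokes $Q^{\mathrm{op}}$-duality carefully, which you do not.

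The paper avoids both issues by arguing inductively rather than directly: reduce to a tree, pick a leaf $i_0$ which (after passing to $Q^{\mathrm{op}}$ if needed) is a source, get a stability $d'$ on $Q'=Q\setminus\{i_0\}$ by induction with $d'(Q'_0)=0$ and $d'(I')<0$ on proper $Q'$-closed sets, and then set $d(i_0)=\varepsilon$ and $d(i)=d'(i)-\varepsilon/|Q'_0|$ for a sufficiently small $\varepsilon>0$. The point is that one does not try to write down $d$ as a universal function of tree-distance; instead the recursion automatically balances the contributions so that every proper $Q$-closed subset has strictly negative $d$-sum. If you want to salvage a direct (non-inductive) construction, you would need a weight function that encodes global tree structure (e.g.\ subtree sizes) rather than traversal order, and you would still need to handle trees that are not out-trees; as written your argument does not get there.
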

\begin{proof}
Deleting arrows if necessary, we can assume that $Q$ is a tree.
Then we can find a vertex $i_0\in Q_0$ that is incident to just one arrow. Without loss of generality we can assume that $i_0$ is the start point of this arrow. The quiver $Q'=Q\ms\set{i_0}$ is connected. By induction on $\n{Q_0}$ we can assume that there exists $d':Q'_0\to\bR$ such that $d'(Q'_0)=0$ and $d'(I)<0$ for any proper $Q'$-closed subset $I\sb Q'_0$ (we define $d'(I)=\sum_{i\in I}d'(i)$). Let $\eps>0$ be the minimum of $\n{d'(I)}$ over all such subsets.
Define $d(i)=d'(i)-\frac\eps{\n{Q'_0}}$ for $i\ne i_0$ and $d(i_0)=\eps$. Then $d(Q_0)=d'(Q'_0)=0$. For any proper $Q$-closed subset $I\sb Q_0$, if $i_0\in I$ and $I'=I\ms\set {i_0}$, then
$$d(I)=d'(I')-\frac{\n{I'}}{\n{Q'_0}}\eps+\eps<d'(I')+\eps\le 0.$$
If $i_0\notin I$ then
$$d(I)=d'(I)-\frac{\n{I}}{\n{Q'_0}}\eps<0.$$
This implies that $Q$ is $Z$-stable with respect to $Z=-d+\sqrt{-1}r$, where $r(i)=1$ for all $i\in Q_0$.
\end{proof}

\begin{proposition}
\label{pr:tree}
Let $Q$ be a quiver with a stability function $Z$ such that $Q$ is the only quiver in $\cC_Z(Q)$. Then $Q$ is a tree. 
\end{proposition}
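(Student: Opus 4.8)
The plan is to argue by contrapositive: I would show that if $Q$ is connected but \emph{not} a tree, then $\cC_Z(Q)$ contains at least one proper spanning subquiver, so $Q$ is not the only element of $\cC_Z(Q)$. (The hypothesis that $Q$ is the only quiver in $\cC_Z(Q)$ in particular forces $\cC_Z(Q)\ne\es$, hence $Q$ itself is $Z$-semistable and connected; I will use connectedness throughout.) Since $Q$ is connected and not a tree, it contains a cycle (as an unoriented cycle in the underlying graph), and in particular there is an arrow $a\in Q_1$ such that the subquiver $G=Q\ms\set a$ (same vertices, one fewer arrow) is still spanning and connected. The only thing left to check is that $G$ is $Z$-semistable, i.e.\ $\mu_Z(I)\le\mu_Z(Q_0)$ for every proper $G$-closed subset $I\sb Q_0$.

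The key observation here is a monotonicity of the closedness condition: if $I$ is $G$-closed (no arrows of $G$ from $I$ to its complement), it need \emph{not} be $Q$-closed, since the deleted arrow $a$ could go from $I$ to $Q_0\ms I$. So a naive "every $G$-closed set is $Q$-closed'' argument fails, and this is the main obstacle. To get around it I would instead choose the arrow $a$ more carefully: pick $a$ to lie on a cycle, so that after deleting $a$ the underlying graph of $G$ is still connected, and then observe that for such an $a$, \emph{no} proper subset $I\sb Q_0$ that is $G$-closed can fail to be $Q$-closed — because if $a:i\to j$ with $i\in I,\ j\notin I$, then $G$ being connected provides an alternative (unoriented) path from $i$ to $j$ inside $G$, but this does not immediately respect orientations. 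Since we only need the \emph{underlying graph} statement for $G$-closed (which, as in the Definition, concerns oriented arrows), the cleanest route is: delete an arrow $a$ belonging to an oriented cycle if one exists; otherwise, reduce to the acyclic case and use a source/sink argument à la Lemma~\ref{mrl2} and the lemma preceding this proposition.

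Concretely, I would split into two cases. \emph{Case 1: $Q$ has an oriented cycle.} Let $a:i\to j$ be an arrow on an oriented cycle; then in $G=Q\ms\set a$ there is still an oriented path from $i$ to $j$. For any $G$-closed $I$: if $i\in I$, following that oriented path forces $j\in I$, so $I$ is in fact $Q$-closed, hence $\mu_Z(I)\le\mu_Z(Q_0)$ by $Z$-semistability of $Q$; if $i\notin I$ then $a$ does not violate $Q$-closedness of $I$ either, so again $I$ is $Q$-closed. Thus $G\in\cG_Z(Q)$, and $G$ is connected, so $G\in\cC_Z(Q)$, contradicting minimality. \emph{Case 2: $Q$ has no oriented cycle but is not a tree.} Then the underlying graph has a cycle, so there is an arrow $a$ whose removal keeps the underlying graph connected; since $Q$ is acyclic, I can orient-compatibly analyze $G$-closed sets using that $G$ is still connected as a graph together with the fact that in an acyclic quiver every nonempty $G$-closed set that is a strict subset still "sees'' a vertex of the complement through the remaining structure — here I would invoke the argument from the lemma just before this proposition, which shows connected quivers admit stabilities making them stable, to rule this configuration out, or directly produce a proper $G\in\cC_Z(Q)$. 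Either way the contradiction is reached, proving $Q$ is a tree.
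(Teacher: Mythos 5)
The proposal does \emph{not} match the paper's argument, and more importantly it contains a genuine error in Case 1 and an admitted gap in Case 2.

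\emph{The error in Case 1.} You delete an arrow $a:i\to j$ lying on an oriented cycle $i\to j\to\dots\to i$ and assert that $G=Q\setminus\{a\}$ ``still has an oriented path from $i$ to $j$.'' That is false: what remains of the cycle is the oriented path from $j$ to $i$, not from $i$ to $j$. Consequently, for a $G$-closed set $I$, you can only deduce $j\in I\Rightarrow i\in I$, which does not rule out the bad configuration $i\in I$, $j\notin I$ in which $a$ leaves $I$ and $I$ is $G$-closed but not $Q$-closed. A concrete illustration is the simple oriented $3$-cycle $1\to2\to3\to1$: after deleting $1\to2$, the set $\{1\}$ is $G$-closed but not $Q$-closed, and whether $G$ is semistable now depends on $Z$ in a nontrivial way. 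One can check directly that for every $Z$ one of the three deletions works, but which one works depends on $Z$, so a uniform ``delete any arrow of the cycle'' argument cannot succeed. \emph{The gap in Case 2} you acknowledge yourself: phrases like ``I would invoke the argument from the lemma'' or ``directly produce a proper $G\in\cC_Z(Q)$'' are placeholders for the step that actually needs a proof, and the lemma you cite (existence of some $Z'$ making a connected quiver stable) speaks about a different stability function than the fixed $Z$ in the hypothesis.

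For comparison, the paper proves the proposition by an entirely different, geometric route. It perturbs $Z$ to a stability $Z''$ that still has $\cC_{Z''}(Q)=\{Q\}$, makes $Z''$ further $\one$-generic (Lemma~\ref{lm:deform}), and then observes that every $Z''$-semistable abelian representation $M$ is stable and indecomposable, forcing $Q_M=Q$; hence the counting polynomial of $M_{Z''}(Q)$ is $(q-1)^{n(Q)}$. By Proposition~\ref{purity} this polynomial has non-negative coefficients, which forces $n(Q)=0$, i.e.\ $Q$ is a tree. So the positivity coming from circle-compactness/purity is doing the real work, and there is no case analysis over cycles at all. If you want a purely combinatorial proof along your lines, you would need to show that for some arrow $a$ on an unoriented cycle the deletion preserves semistability --- and for this one has to use the additivity of $\theta(I)=d(I)r(Q_0)-r(I)d(Q_0)$ over the ``new'' $G$-closed sets introduced by each candidate deletion, which is a different and more delicate argument than the one you sketched.
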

\begin{proof}
By the previous lemma there exists a stability function $Z'$ such that $Q$ is $Z'$-stable. Let $Z''=Z+\eps Z'$ for $0<\eps\ll1$. Then $Q$ is $Z''$-stable and is the only quiver in $\cC_{Z''}(Q)$. Deforming $Z''$ as in Lemma \ref{lm:deform}, we can assume that $Z''$ is moreover \one-generic.
By Proposition \ref{purity} the counting polynomial of $M_{Z''}(Q)$ has non-negative coefficients. If $M\in R_{Z''}(Q)$ then $M$ is $Z''$-stable and in particular indecomposable. This implies that $Q_M\in\cC_{Z''}(Q)$ and $Q_M=Q$. Therefore the counting polynomial of $M_{Z''}(Q)$ equals $(q-1)^{n(Q)}$. Therefore $n(Q)=0$ and $Q$ is a tree.
\end{proof}

\begin{theorem}
\label{th:pos1}
Let $Q$ be a quiver with a stability function $Z$. Then $a_Z(Q)\in\bN[q]$.
\end{theorem}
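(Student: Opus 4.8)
The plan is to run the Harder--Narasimhan argument of Proposition~\ref{prp:hn deform}, but on the \emph{indecomposable} semistable locus rather than on the full one, and without needing the symmetry hypothesis used there: after deforming $Z$ to a generic stability $Z\df$, each $Z\df$-Harder--Narasimhan stratum of $R^\ind_Z(Q)$ will split as a product of smaller moduli spaces (controlled by Proposition~\ref{purity}) times a combinatorial factor that turns out to be a value of a Tutte polynomial, hence also non-negative. One may assume $Q$ finite, and connected (if $Q$ is disconnected then $\cC_Z(Q)=\es$ and $a_Z(Q)=0$). Apply Lemma~\ref{lm:deform} to $Q$ and fix a stability $Z\df$ on $Q$ which is $\one$-generic and satisfies $\mu_{Z\df}(I)\ne\mu_{Z\df}(J)$ for all nonempty $I\ne J\sb Q_0$. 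Then the restriction of $Z\df$ to any full subquiver $Q|_I$ is again $\one_I$-generic, so every $Z\df$-semistable abelian representation of $Q|_I$ is stable --- in particular indecomposable, whence $R_{Z\df}(Q|_I)=R^\ind_{Z\df}(Q|_I)$; moreover $a_{Z\df}(Q|_I)=[M_{Z\df}(Q|_I)]$ (the $G(Q|_I)/\Gm$-bundle $R_{Z\df}(Q|_I)\to M_{Z\df}(Q|_I)$ being Zariski-locally trivial by Hilbert's Theorem~90), and this motive lies in $\bN[q]$ by Proposition~\ref{purity}.

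Next I would stratify $R^\ind_Z(Q)$ by the Harder--Narasimhan type $(I_1,\dots,I_s)$ of a representation with respect to $Z\df$, the blocks ordered by strictly decreasing $Z\df$-slope. By the corollary following Lemma~\ref{lm:deform}, a $Z$-semistable abelian representation has $Z\df$-HN type $(I_1,\dots,I_s)$ precisely when $\mu_Z(I_k)=\mu_Z(Q_0)$ for every $k$; and since the $Z\df$-slopes are pairwise distinct, each unordered partition $Q_0=I_1\dot\cup\dots\dot\cup I_s$ with all $\mu_Z(I_k)=\mu_Z(Q_0)$ carries a unique admissible ordering and indexes one stratum. Inside such a stratum, the requirement that $I_1\cup\dots\cup I_k$ be $M$-closed for every $k$ forces each arrow from a block of larger $Z\df$-slope into a block of smaller $Z\df$-slope to vanish, leaves the arrows in the reverse direction free, and makes the scalars on arrows internal to $I_k$ range over $R_{Z\df}(Q|_{I_k})$ (these are the HN factors); hence the stratum is $\prod_{k=1}^s R_{Z\df}(Q|_{I_k})\times\bA^N$, with $N$ the number of free between-block arrows. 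Since each HN factor is $Z\df$-stable, hence indecomposable and thus of connected support spanning $I_k$, a representation in this stratum is indecomposable exactly when the multigraph $H$ on $\{1,\dots,s\}$ carrying one edge per non-zero free arrow is connected.

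Passing to motives, the part of the stratum lying in $R^\ind_Z(Q)$ is the variety $\prod_{k=1}^s R_{Z\df}(Q|_{I_k})\times\{x\in\bA^N:\ \supp x\text{ is connected and spanning on }\{1,\dots,s\}\}$, of class $\prod_k[R_{Z\df}(Q|_{I_k})]\cdot\sum_{H'\in\cC(H)}(q-1)^{\n{H'_1}}$. Summing over strata, dividing by $[G(Q)]=(q-1)^{\n{Q_0}}=\prod_k(q-1)^{\n{I_k}}$, multiplying by $q-1$, and using $[R_{Z\df}(Q|_{I_k})]/(q-1)^{\n{I_k}}=a_{Z\df}(Q|_{I_k})/(q-1)$ together with the identity $\sum_{H'\in\cC(H)}(q-1)^{\n{H'_1}}=(q-1)^{s-1}T(H;1,q)$ for connected $H$ (which follows from the remark on the Tutte polynomial above, since $\n{H'_1}=n(H')+s-1$), one obtains
$$a_Z(Q)=\sum_{\substack{Q_0=I_1\dot\cup\dots\dot\cup I_s\\ \mu_Z(I_k)=\mu_Z(Q_0)\ \forall k,\ H\text{ connected}}}\ \Bigl(\prod_{k=1}^s a_{Z\df}(Q|_{I_k})\Bigr)\,T(H;1,q).$$
Each $a_{Z\df}(Q|_{I_k})$ lies in $\bN[q]$ by the first paragraph, and $T(H;1,q)\in\bN[q]$ since Tutte polynomials have non-negative integer coefficients; hence $a_Z(Q)\in\bN[q]$.

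The step demanding the most care --- and really the only obstacle --- is the structural description of the $Z\df$-Harder--Narasimhan strata of $R^\ind_Z(Q)$: one must check that such a stratum genuinely splits as the product of the smaller moduli spaces $R_{Z\df}(Q|_{I_k})$ with the affine space of free between-block arrows (keeping precise track of which between-block arrows are forced to zero), and then recognise that indecomposability of a representation in the stratum is governed exactly by connectedness of the block multigraph $H$, so that the corresponding class is $\prod_k[R_{Z\df}(Q|_{I_k})]\cdot\sum_{H'\in\cC(H)}(q-1)^{\n{H'_1}}$. After that, the two positivity inputs --- Proposition~\ref{purity} for the generic pieces and non-negativity of the Tutte polynomial for $T(H;1,q)$ --- close the argument.
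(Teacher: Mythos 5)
Your proof is correct and takes a genuinely different route from the paper's. The paper works directly on the set $\cC_Z(Q)$ of connected spanning semistable subquivers: after fixing a total order on $Q_1$, it runs an iterated deletion process that retracts each $G\in\cC_Z(Q)$ onto a unique minimal element $T$ (one with $A(T)=\es$), proves via Proposition~\ref{pr:tree} (the only geometric input, itself resting on Proposition~\ref{purity}) that such minimal elements are trees, and identifies the fiber over each $T$ with the power set of an ``externally active'' arrow set $E(T)$, yielding the closed form $a_Z(Q)=\sum_{T\in\cT_Z(Q)}q^{|E(T)|}$. You instead stratify $R^\ind_Z(Q)$ by $Z\df$-Harder--Narasimhan type --- in effect transporting the computation of Proposition~\ref{prp:hn deform} to the indecomposable locus --- and land on $a_Z(Q)=\sum\prod_k a_{Z\df}(Q|_{I_k})\cdot T(H;1,q)$, with positivity supplied by Proposition~\ref{purity} for the stable block pieces and by the classical non-negativity of Tutte coefficients for the block multigraph $H$. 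Your observation that the symmetry hypothesis of Definition~\ref{df:ab1} is dispensable is correct and worth making explicit: in Proposition~\ref{prp:hn deform} it served only to make the between-block affine dimension order-independent, which matters when forming \emph{virtual} motives; for the raw motive count you perform, the unique $Z\df$-ordering of the blocks renders the issue moot. The trade-off is that the paper's route delivers a clean spanning-tree expansion in the spirit of internal/external activity, whereas yours gives a structured factorization through moduli of stable pieces and Tutte polynomials of block graphs, at the price of invoking Tutte positivity as an extra black box.
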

\begin{proof}
Given a quiver $G\in\cC_Z(Q)$, let
$$A(G)=\sets{a\in G_1}{G\ms\set a\in\cC_Z(Q)}.$$
We denote the set of graphs $G\in\cC_Z(Q)$ with $A(G)=\es$ by $\cT_Z(Q)$. By Proposition \ref{pr:tree} the family $\cT_Z(Q)$ consists of trees.

We choose a total order on $Q_1$. For any $G\in\cC_Z(G)\ms\cT_Z(G)$, let $a(G)=\min A(G)\in G_1$. Deleting the arrow $a(G)$ from $G$ and continuing this process we will eventually obtain some $T\in\cT_Z(Q)$. Conversely, given $T\in\cT_Z(Q)$ let
$$E(T)=\sets{b\in Q_1\ms T_1}{b=a(T\cup\set b)}.$$
Then for any subset $J\sb E(T)$ the quiver $G=T\cup J$ is contained in $\cC_Z(Q)$  and $T$ is obtained from $G$ by the above process. This implies that
$$a_Z(Q)=\sum_{G\in\cC_Z(Q)}(q-1)^{n(G)}
=\sum_{T\in\cT_Z(Q)}(q-1)^{n(T)}q^{\n{E(T)}}
=\sum_{T\in\cT_Z(Q)}q^{\n{E(T)}}\in\bN[q].$$
\end{proof}

In view of the above theorem we can formulate the following generalization of the Kac conjecture.

\begin{conjecture}
Let $Q$ be a quiver with a stability function $Z$ and let $\al\in\Ga_+(Q)$ be a dimension vector.
Given a field $\bk$, let $M^\ind_Z(Q,\al)(\bk)$ be the set of isomorphism classes of absolutely indecomposable $Z$-semistable $Q$-representations over $\bk$ having dimension vector \al. Then there exists a polynomial $a_Z(Q,\al)\in\bN[q]$ such that for any finite field $\bF_q$ we have
$a_Z(Q,\al)(q)=\n{M^\ind_Z(Q,\al)(\bF_q)}$.
\end{conjecture}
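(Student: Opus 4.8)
\textbf{Reduction to a Kac--Hua count inside a slope subcategory.} The plan is to fix the slope $\mu=\mu_Z(\al)$ and pass to the full subcategory $\cC_\mu$ of $Z$-semistable $Q$-representations of slope $\mu$. By the standard theory (Rudakov, King) $\cC_\mu$ is a finite-length abelian category, closed under subobjects, quotients, extensions and direct sums, so it has the Krull--Schmidt property; its simple objects are the $Z$-stable representations of slope $\mu$, and for $\be$ with $\mu_Z(\be)=\mu$ the set $M^\ind_Z(Q,\be)(\bk)$ is exactly the set of indecomposable objects of $\cC_\mu$ of dimension vector $\be$. By Reineke's Harder--Narasimhan recursion \eqref{eq:HN} the stacky count $\sum_{\mu_Z(\be)=\mu}\frac{\nnn{R_Z(Q,\be)(\bF_q)}}{\nnn{\GL_\be(\bF_q)}}x^\be$ of all objects of $\cC_\mu$ is a rational function of $q$ with $\bQ$-coefficients.

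\textbf{Integrality.} Next I would run the Kac--Hua argument (counting indecomposables via a plethystic logarithm) inside $\cC_\mu$ instead of the whole representation category: since every object of $\cC_\mu$ is uniquely a direct sum of indecomposables, and $\cC_\mu$ has finitely many indecomposables in each dimension, the generating series above is the plethystic exponential of $\sum_{\mu_Z(\be)=\mu}\frac{a_Z(Q,\be)(q)}{q-1}x^\be$, where $a_Z(Q,\be)(q):=(q-1)$ times the number of absolutely indecomposable objects of $\cC_\mu$ of dimension $\be$ over $\bF_q$; the descent bookkeeping between $\bF_q$- and $\bF_{q^n}$-points is handled exactly as for ordinary Kac polynomials. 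Taking the plethystic logarithm of a rational function with $\bQ$-coefficients and clearing the denominators $q^n-1$ then forces $a_Z(Q,\be)\in\bZ[q]$; this is the semistable analogue of Kac's polynomiality theorem, and it contains equation \eqref{eq:exp} of this section as the abelian special case $\be=\one$.

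\textbf{Positivity (the hard part).} What remains is $a_Z(Q,\al)\in\bN[q]$; this is precisely why the statement is a conjecture. I see three plausible routes, all with a genuine obstacle. First, deform $Z$ to an $\al$-generic $Z\df$ as in Lemma \ref{lm:deform}: then objects of $\cC_\mu$ acquire a $Z\df$-Harder--Narasimhan filtration with $Z\df$-stable subquotients living on smaller slices, and one would express $a_Z(Q,\al)$ through the smooth moduli spaces $M_{Z\df}(Q,\be)$, which are polynomial-count with non-negative coefficients by Proposition \ref{purity} (the circle-compact $\Gm$-action forcing purity); the obstacle is the inductive combinatorial control of which indecomposables of $\cC_\mu$ survive the wall-crossing and with what multiplicities --- the difficulty that in the abelian case Section \ref{geometricity} bypasses by hand via the Gessel--Wang bijection. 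Second, one could try to realize $a_Z(Q,\al)$ as the dimension of a BPS cohomology for $\cC_\mu$ in the sense of Davison--Meinhardt, whose non-negativity is built into the cohomological integrality theorem. Third, for trivial $Z$ the statement is the Kac positivity conjecture, proved by Hausel--Letellier--Rodriguez-Villegas via purity of character varieties, and for indivisible $\al$ with generic $Z$ it already follows from Proposition \ref{purity}; one would hope to interpolate by a similar arithmetic-harmonic-analysis/purity argument for the semistable moduli. I expect the first route --- the precise inductive wall-crossing bookkeeping for indecomposables at non-thin dimension vectors --- to be the main obstacle.
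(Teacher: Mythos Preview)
This statement is a \emph{conjecture} in the paper; there is no proof to compare against. You recognize this yourself, framing your proposal as a strategy and explicitly flagging positivity as ``precisely why the statement is a conjecture.''

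The paper's only direct evidence is the abelian case $\al=\one$ (Theorem~\ref{th:pos1}), whose proof is entirely combinatorial and unlike any of your three routes: one isolates the set $\cT_Z(Q)$ of minimal connected $Z$-semistable spanning subquivers, shows via Proposition~\ref{pr:tree} that these are trees, and builds a deletion-style bijection yielding $a_Z(Q)=\sum_{T\in\cT_Z(Q)}q^{\n{E(T)}}$. This argument rests on the identification of subrepresentations of an abelian representation with vertex subsets and has no evident extension to higher $\al$, so the paper's method and your proposed strategies are essentially disjoint. Your assessment of the obstacles in the general case is reasonable.

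One small correction to your integrality paragraph: equation~\eqref{eq:exp} uses the \emph{ordinary} exponential, not the plethystic one, because in the abelian setting indecomposable and absolutely indecomposable coincide and all automorphism groups are tori (Lemma~\ref{mrl2}); for general $\al$ the full Hua-type argument with Frobenius descent is required, and while that should indeed give $a_Z(Q,\al)\in\bZ[q]$, the paper does not claim even this polynomiality part as known.
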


We can generalize the Tutte polynomial for the case of a quiver with a stability function.

\begin{conjecture}[Semistable Tutte polynomial]
Let $Q$ be a quiver with a stability function $Z$.
Define the semistable Tutte polynomial by 
$$T_Z(Q;t,q)=\sum_{G\in\cG_Z(Q)}(t-1)^{\ka(G)-\ka(Q)}(q-1)^{n(G)}.$$
We conjecture that $T_Z(Q;t,q)\in\bN[t,q]$. The proof should not be very different from Theorem \ref{th:pos1}.
\end{conjecture}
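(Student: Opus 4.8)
The plan is to adapt Tutte's internal/external activity decomposition of the ordinary Tutte polynomial to the $Z$-semistable setting, in the spirit of the proof of Theorem~\ref{th:pos1}. Fix once and for all a total order on $Q_1$. The crucial structural feature is that $\cG_Z(Q)$ is an \emph{up-set} in the Boolean lattice of spanning subquivers: if $G\in\cG_Z(Q)$ and $G\sb G'$ with $G'_0=Q_0$, then $G'\in\cG_Z(Q)$, since passing from $G$ to $G'$ only removes $G$-closed subsets. Equivalently, $G\in\cG_Z(Q)$ iff $G_1$ meets the out-boundary $\partial(I)=\sets{a\in Q_1}{a\text{ goes from }I\text{ to }Q_0\ms I}$ of every proper nonempty $I\sb Q_0$ with $\mu_Z(I)>\mu_Z(Q_0)$.

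As in the proof of Theorem~\ref{th:pos1}, I would first build a reduction map from $\cG_Z(Q)$ to a set of ``$Z$-bases''. Given $G\in\cG_Z(Q)$, repeatedly delete the minimal arrow lying on a cycle of $G$ whose deletion keeps the subquiver in $\cG_Z(Q)$, until no such arrow remains; then (since $\cG_Z(Q)$ is an up-set, this step never leaves $\cG_Z(Q)$) repeatedly add the minimal arrow merging two connected components, until a maximal spanning forest is reached. The key lemma, generalizing Proposition~\ref{pr:tree}, is that the first phase terminates at a forest: if $F$ is stuck and $C$ is a connected component of $F$, then $C_0$ and $Q_0\ms C_0$ are both $F$-closed, forcing $\mu_Z(C_0)=\mu_Z(Q_0)$; hence $C$ with the restricted stability is $Z$-semistable and connected, so by Proposition~\ref{pr:tree} it admits a $Z$-semistable spanning tree $T_C$; replacing $C$ by $T_C$ in $F$ yields an element of $\cG_Z(Q)$ (a mediant computation, since all relevant slopes are $\le\mu_Z(Q_0)$, with equality on the components of $F$), and if $C$ were not a tree, some cycle-arrow of $C$ would then have been deletable within $\cG_Z(Q)$ --- a contradiction. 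Thus each $Z$-base $F$ is a $Z$-semistable maximal spanning forest, $\ka(F)=\ka(Q)$.

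Granting this, the fibre of the reduction over a $Z$-base $F$ should equal the ``interval'' $\sets{(F\ms I)\cup J}{I\sb\mathrm{IA}_Z(F),\ J\sb\mathrm{EA}(F)}$, where $\mathrm{EA}(F)\sb Q_1\ms F_1$ is the ordinary set of externally active arrows (all of which are permissible because $\cG_Z(Q)$ is an up-set) and $\mathrm{IA}_Z(F)\sb F_1$ is the subset of internally active arrows $e$ with $F\ms\{e\}\in\cG_Z(Q)$. On such a member the exchange argument of classical Tutte theory shows that no externally active arrow of $J$ re-merges two components split off by deleting $I$ (if it did, an internally active $e\in I$ and an externally active $b\in J$ would satisfy both $e<b$ and $b<e$), so $\ka\big((F\ms I)\cup J\big)-\ka(Q)=\n I$ and $n\big((F\ms I)\cup J\big)=\n J$; summing $(t-1)^{\n I}(q-1)^{\n J}$ over the interval gives $t^{\n{\mathrm{IA}_Z(F)}}q^{\n{\mathrm{EA}(F)}}$, whence
$$T_Z(Q;t,q)=\sum_F t^{\n{\mathrm{IA}_Z(F)}}\,q^{\n{\mathrm{EA}(F)}}\in\bN[t,q],$$
the sum ranging over $Z$-bases. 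The main obstacle is the verification that the reduction fibres are exactly these intervals: since $\cG_Z(Q)$ is an up-set but not a down-set, deleting an arrow can leave $\cG_Z(Q)$, so the classical matroid-exchange bookkeeping must be redone tracking which internally active arrows stay deletable, and one must show that the greedy reduction --- which deletes the minimal \emph{deletable} cycle-arrow, not necessarily the minimal cycle-arrow --- nonetheless sends each $G\in\cG_Z(Q)$ to the unique $Z$-base whose interval contains it. The tools for this are the up-set property, the slope-equality observation above, and, as in Proposition~\ref{pr:tree} and Theorem~\ref{th:pos1}, the deformation of $Z$ supplied by Lemma~\ref{lm:deform}.
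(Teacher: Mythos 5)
This is a \emph{conjecture} in the paper, for which the authors offer no proof; they only remark that it ``should not be very different from Theorem~\ref{th:pos1}.'' Your proposal follows exactly that suggestion, extending the deletion reduction of Theorem~\ref{th:pos1} to a two-sided (internal/external activity) decomposition. Several of your observations are correct and important: $\cG_Z(Q)$ is an up-set in the Boolean lattice of spanning subquivers (since $G'$-closed implies $G$-closed when $G\sb G'$); a component $C$ of a forest $F$ stuck under your phase-one deletion satisfies $\mu_Z(C_0)=\mu_Z(Q_0)$ by the mediant property, so Proposition~\ref{pr:tree} applies after restriction; and the weight computation $\ka((F\ms I)\cup J)-\ka(Q)=\n I$, $n((F\ms I)\cup J)=\n J$ is correctly inherited from the classical exchange argument.

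However, the proof has a genuine gap, which you acknowledge but do not close, and it is precisely where the bivariate case differs from Theorem~\ref{th:pos1}. In Theorem~\ref{th:pos1} the fibre over a base $T$ is $\sets{T\cup J}{J\sb E(T)}$, which lies \emph{above} $T$, so the up-set property of $\cC_Z(Q)$ immediately shows the whole interval is in $\cC_Z(Q)$. Here you must also delete arrows \emph{below} $F$, and the up-set property works against you: you define $\mathrm{IA}_Z(F)=\sets{e\in\mathrm{IA}(F)}{F\ms\set e\in\cG_Z(Q)}$, which only guarantees $F\ms\set e\in\cG_Z(Q)$ one arrow at a time, while the interval claim requires $F\ms I\in\cG_Z(Q)$ for every $I\sb\mathrm{IA}_Z(F)$. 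This is not a formal consequence of the up-set property (it is the statement that the up-set restricted to the sub-cube $[F\ms\mathrm{IA}(F),F]$ has a unique minimal element), and it must be proved. The proof does seem to go through by exploiting the affine structure of the ``bad'' subsets: normalizing $d(Q_0)=0$, the semistability of each $F\ms\set e$ forces $d(C_j)=0$ for all components $C_j$ of $F\ms I$, and one can show that any $F\ms I$-closed subset is the disjoint union of $F|_{C_j}$-closed pieces, each of which can be augmented by full zero-weight components to become $F$- or $(F\ms\set{e_i})$-closed, hence of nonpositive $d$-value. None of this is in your sketch. A second unaddressed point is whether your two-phase delete-then-add reduction actually sends each $G\in\cG_Z(Q)$ to the unique $F$ whose interval contains it --- unlike the classical matroid activity bijection, where the processing of edges is interleaved by the total order, your phase separation is nonstandard and its correctness (in particular surjectivity onto your $Z$-bases and compatibility with $\mathrm{IA}_Z$/$\mathrm{EA}$) needs verification. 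So: right approach, several correct lemmas, but the key interval and bijection claims remain unproved.
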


\begin{remark}
If $Z$ is \one-generic then $\cG_Z(Q)=\cC_Z(Q)$. This implies
$$T_Z(Q;t,q)=T_Z(Q;1,q)=\sum_{G\in\cC_Z(Q)}(q-1)^{n(G)}=a_Z(Q)\in\bN[q].$$
\end{remark}

\section{MPS wall-crossing formula}\label{mpswcf}
Let \Ga be a rank $2$ lattice with a skew-symmetric form $\ang{\cdot\,,\cdot}:\Ga\xx\Ga\to\bZ$ and a basis $(e_1,e_2)$ such that $\ang{e_1,e_2}>0$.
Let $\Ga_+=\bN e_1+\bN e_2\iso\bN^2$ and $\Ga_+^*=\Ga_+\ms\set0$.
We consider stability functions $Z:\Ga\to\bC$ such that
$$Z(e_i)=-d_i+\sqrt{-1}r_i\in\bH_+,\qquad i=1,2.$$
For any $\al=(\al_1,\al_2)\in\Ga_+^*$, we define its slope $\mu_Z(\al)\in\bR\cup\set\infty$ by
$$\mu_Z(\al)=\frac{d_1\al_1+d_2\al_2}{r_1\al_1+r_2\al_2}$$
and define a total preorder $\preceq_Z$ on $\Ga_+^*$ by the rule $\al\preceq_Z\be$ if $\mu_Z(\al)\le\mu_Z(\be)$.

Although the set $\bH_+^2$ of different stability functions $Z:\Ga\to\bC$ is huge, these functions can induce just three different total preorders depending on the inequality between $\mu_Z(e_1)$ and $\mu_Z(e_2)$. 
We will say that a stability function $Z$ is trivial (belongs to the marginal wall) if $\mu_Z(e_1)=\mu_Z(e_2)$.
We will denote by $c_+$ the set (chamber) of stability functions $Z$ such that $\mu_Z(e_1)<\mu_Z(e_2)$ and we will denote by $c_-$ the set (chamber) of stability functions $Z$ such that $\mu_Z(e_1)>\mu_Z(e_2)$.

\begin{remark}
Let $Q$ be the generalized Kronecker quiver with two vertices $1,2$ and with $m>0$ arrows from $2$ to $1$. Then $\Ga=\Ga(Q)\iso\bZ^2$ and the skew-symmetric form $\ang{\al,\be}=\hi(\al,\be)-\hi(\be,\al)$ satisfies $\ang{e_1,e_2}=m>0$. In the chamber $c_+$ we have $e_1\prec e_2$ and there exist plenty of stable representations of $Q$.
On the other hand, in the chamber $c_-$ we have $e_2\prec e_1$ and the only stable representations of $Q$ are one-dimensional.
\end{remark}

\begin{lemma}
Let $Z:\Ga\to\bC$ be some stability function.
\begin{enumerate}
	\item If $Z\in c_+$ then $\al\preceq_Z\be$ if and only if $\ang{\al,\be}\ge0$.
	\item If $Z\in c_-$ then $\al\preceq_Z\be$ if and only if $\ang{\al,\be}\le0$.
\end{enumerate}
\end{lemma}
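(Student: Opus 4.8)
The plan is to reduce both equivalences to one bilinear identity comparing the sign of $\mu_Z(\be)-\mu_Z(\al)$ with that of $\ang{\al,\be}$, and then to decide which sign occurs in which chamber from the inequality between $\mu_Z(e_1)$ and $\mu_Z(e_2)$.

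Write $\al=\al_1e_1+\al_2e_2$, $\be=\be_1e_1+\be_2e_2$ with $\al_i,\be_i\in\bN$, set $m=\ang{e_1,e_2}>0$, $d(\al)=d_1\al_1+d_2\al_2$ and $r(\al)=r_1\al_1+r_2\al_2$, so that $Z(\al)=-d(\al)+\sqrt{-1}\,r(\al)$ and $\ang{\al,\be}=(\al_1\be_2-\al_2\be_1)m$. First I would treat the case $r(\al),r(\be)>0$ (which, as explained below, is all that really matters): there $\mu_Z(\al)\le\mu_Z(\be)$ is equivalent to $d(\al)r(\be)\le d(\be)r(\al)$, and expanding the left-hand side verifies the identity
$$d(\be)r(\al)-d(\al)r(\be)=(d_2r_1-d_1r_2)(\al_1\be_2-\al_2\be_1)=\frac{d_2r_1-d_1r_2}{m}\,\ang{\al,\be}.$$
Since $\mu_Z(e_2)-\mu_Z(e_1)=(d_2r_1-d_1r_2)/(r_1r_2)$, the scalar $d_2r_1-d_1r_2$ is positive exactly when $Z\in c_+$ and negative exactly when $Z\in c_-$; combined with $m>0$ this yields $\mu_Z(\al)\le\mu_Z(\be)\iff\ang{\al,\be}\ge0$ when $Z\in c_+$, and $\mu_Z(\al)\le\mu_Z(\be)\iff\ang{\al,\be}\le0$ when $Z\in c_-$.

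It then remains to dispose of the degenerate cases where some $r_i=0$ and $\mu_Z$ takes the value $\infty$. The key observation is that $Z(e_i)\in\bH_+$ forces $d_i>0$ whenever $r_i=0$; hence $r_1>0$ necessarily holds in $c_+$ (otherwise $\mu_Z(e_1)=\infty\not<\mu_Z(e_2)$), and symmetrically $r_2>0$ in $c_-$. So for $Z\in c_+$ the only elements of $\Ga_+^*$ on which $\mu_Z$ may equal $\infty$ are the positive multiples of $e_2$, and only when $r_2=0$; one then checks the stated equivalence directly in the three remaining subcases — $\al$ a positive multiple of $e_2$, $\be$ a positive multiple of $e_2$, or both — using $m>0$, the sign constraints coming from $\bH_+$, and the convention that $\infty$ is the largest slope. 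The chamber $c_-$ is handled identically with $e_1$ and $e_2$ interchanged.

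I do not anticipate a genuine obstacle: the computations are elementary, and the only mild subtlety is keeping track of the $r_i=0$ cases together with the $\infty$ convention. Conceptually, the statement merely records that in rank two the $Z$-preorder is constant on each chamber and is governed by the sign of the (uniquely determined up to a positive scalar) skew-symmetric form.
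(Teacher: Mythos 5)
Your argument is essentially the paper's: both reduce $\al\preceq_Z\be$ to the sign of the cross term $d(\be)r(\al)-d(\al)r(\be)=(d_2r_1-d_1r_2)(\al_1\be_2-\al_2\be_1)$, identify the sign of the scalar with the chamber, and conclude via $\ang{\al,\be}=(\al_1\be_2-\al_2\be_1)\ang{e_1,e_2}$. The only difference is that you explicitly treat the degenerate $r_i=0$ cases (where a slope is $\infty$), which the paper glosses over; this is a useful extra bit of care but not a different method.
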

\begin{proof}
Let $Z\in c_+$ and let $\mu_Z(e_1)=\frac{d_1}{r_1}$, $\mu_Z(e_2)=\frac{d_2}{r_2}$.
Then $\mu_Z(e_1)<\mu_Z(e_2)$ and therefore $d_1r_2-d_2r_1<0$.
We have $\al\preceq_Z\be$ if and only if
$$\frac{d_1\al_1+d_2\al_2}{r_1\al_1+r_2\al_2}\le
\frac{d_1\be_1+d_2\be_2}{r_1\be_1+r_2\be_2},$$
that is,
$$(\al_1\be_2-\al_2\be_1)(d_1r_2-d_2r_1)\le0$$
and
$$\al_1\be_2-\al_2\be_1\ge0.$$
This is equivalent to
$$\ang{\al,\be}=(\al_1\be_2-\al_2\be_1)\ang{e_1,e_2}\ge0.$$
\end{proof}

We will use the slope ordering of the vectors (and rays) in the first quadrant of $\Ga_\bR=\Ga\ts\bR\iso\bR^2$ (for example $e_1<e_2$). This corresponds to the ordering with respect to stability functions in the chamber $c_+$. Assume that for any $\ga\in\Ga_+^*$ we have invariants $\ob_\ga,\oa_\ga\in\cV$ (rational DT invariants in the chambers $c_+$ and $c_-$, respectively) related by the equation (KS wall-crossing formula or HN recursion)
\begin{equation}
\prod^{\curvearrowright}_l\exp\left(\frac{\sum_{\ga\in l\cap\Ga}\ob_\ga x^\ga}{\gme}\right)
=\prod^{\curvearrowleft}_l\exp\left(\frac{\sum_{\ga\in l\cap\Ga}\oa_\ga x^\ga}{\gme}\right)
\label{eq:ks1again}
\end{equation}
in the quantum torus of $(\Ga,\ang{\cdot\,,\cdot})$. Then we can express
\begin{equation}
\ob_\ga=\sum_{\over{m:\Ga_+^*\to\bN}{\nn m=\ga}}\frac{g(m)}{m!}\prod_{\al\in\Ga_+^*}(\oa_\al)^{m(\al)}
\label{eq:mps1again}
\end{equation}
for some invariants $g(m)$. Our goal is to determine these invariants.

Let $\hq$ be the quiver with set of vertices $\Ga_+^*$ and with the number of arrows from $\al$ to $\be$ equal to $\ang{\be,\al}$ if $\ang{\be,\al}>0$, and zero otherwise. 
Note that $\Ga_+(\hq)=\cP(\Ga_+^*)$, \ie a map $m:\Ga_+^*\to\bN$ with finite support can be identified with a dimension vector of $\hq$.
The natural group homomorphism
$$\nn-:\Ga(\hq)\to\Ga,\qquad m\mto\sum_{\al\in\Ga_+^*}m(\al)\al.$$
preserves the skew-symmetric forms and therefore induces an algebra homomorphism of the corresponding quantum tori. Any stability function on $\Ga$ induces a stability function on $\Ga(\hq)$ and therefore a total preorder on $\Ga_+^*(Q)$.
As before, we consider only stability functions $Z$ on \Ga from the chamber $c_+$. Then, for $m,m'\in\Ga_+^*(Q)$, we have $\mu_Z(m)\le \mu_Z(m')$ if and only if $\ang{m,m'}\ge0$. This implies that $\mu_Z(m)=\mu_Z(m')$ if and only if $\ang{m,m'}=0$, and we can define abelian quiver invariants $g(m)=g_+(m)$ for any $m\in\cP(\Ga_+^*)=\Ga_+(\hq)$ by the formula
\begin{equation}
1+\sum_{\nn m\in l}f_+(m)\frac{x^m}{m!}=\exp\left(\sum_{\nn m\in l}\frac{g(m)}{\gme}\frac{x^m}{m!}\right),
\label{eq:mps sol2}
\end{equation}
where $f_+(m)$ is the  motivic invariant of the moduli stack of the abelian semistable representations of the quiver $\hq(m)$ (see \ref{sec:abelian}).

\begin{theorem}
Assume that the invariants $g(m)$ for $m\in\cP(\Ga_+^*)=\Ga_+(\hq)$ are the abelian quiver invariants of the quiver $Q$. Then the invariants $\ob(\ga),\oa(\ga)$ for $\ga\in\Ga_+^*$ satisfy the KS wall-crossing formula \eqref{eq:ks1again} if and only if they satisfy \eqref{eq:mps1again}.
\end{theorem}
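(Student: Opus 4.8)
The plan is to exhibit a single homomorphism of quantum tori under which both sides of \eqref{eq:ks1again} become the image of the stability-independent series on the right of \eqref{eq:wall-cross} for the quiver $\hq$, reducing the theorem to the abelian wall-crossing formula (Theorem \ref{th:wall-cross}). Concretely, I would introduce the continuous $\cV$-algebra homomorphism $\Phi\colon\bT_{\hq}\to\bT_\Ga$, $x^m\mapsto\big(\prod_{\al\in\Ga_+^*}\oa_\al^{m(\al)}\big)x^{\nn m}$, which is well defined because $\nn-\colon\Ga(\hq)\to\Ga$ preserves the skew-symmetric forms and the $\oa_\al$ are central. Put $\mathcal A=\sum_{m\in\cP(\Ga_+^*)}f_\triv(m)\tfrac{x^m}{m!}\in\bT_{\hq}$, which by Theorem \ref{th:wall-cross} equals $\prod_l^{\curvearrowright}\big(1+\sum_{Z(m)\in l}f_Z(m)\tfrac{x^m}{m!}\big)$ for any stability $Z$ on $\hq$, and write $P_+,P_-$ for the two sides of \eqref{eq:ks1again}.

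The first step is to show $\Phi(\mathcal A)=P_-$. In the expansion of $P_-=\prod_l^{\curvearrowleft}\exp\!\big(\tfrac{\sum_{\ga\in l}\oa_\ga x^\ga}{\gme}\big)$, basis monomials within a single ray commute and produce no $q$-power, whereas the anticlockwise ordered product across rays contributes, for a fixed total multiplicity $m\in\cP(\Ga_+^*)$, the factor $q^{\frac12\sum_{\mu_Z(\ga)<\mu_Z(\ga')}m(\ga)m(\ga')\ang{\ga,\ga'}}$. For a stability in $c_+$, $\mu_Z(\ga)<\mu_Z(\ga')\iff\ang{\ga,\ga'}>0$, so by the very definition of the arrows of $\hq$ this exponent equals $\tfrac12 r_{\hq}(m,m)$; hence the $m$-part of $P_-$ is $\big(\prod_\al\oa_\al^{m(\al)}\big)\tfrac{q^{\frac12 r_{\hq}(m,m)}}{(\gme)^{\n m}}\tfrac{x^{\nn m}}{m!}=\Phi\!\big(f_\triv(m)\tfrac{x^m}{m!}\big)$ by \eqref{eq:trivial stab}, so $P_-=\Phi(\mathcal A)$.

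The second step is to show $\Phi(\mathcal A)=P_+$. Applying Theorem \ref{th:wall-cross} to $\hq$ with the pull-back of a generic $Z\in c_+$, the ray $l$ in \eqref{eq:wall-cross} is exactly the set of $m$ with $\nn m$ on a fixed ray of $\Ga_+^*$, and by \eqref{eq:mps sol2} its factor is $\exp\!\big(\tfrac1{\gme}\sum_{\nn m\in l}g(m)\tfrac{x^m}{m!}\big)$; applying $\Phi$ and regrouping the sum by $\ga=\nn m$ turns it into $\exp\!\big(\tfrac{\sum_{\ga\in l}\ob_\ga x^\ga}{\gme}\big)$ by the defining formula \eqref{eq:mps1again}. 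As the clockwise order of rays in $\bH_+$ for $Z\in c_+$ is the decreasing-slope order used in \eqref{eq:ks1again}, the ordered product over $l$ is $P_+$; hence $P_+=\Phi(\mathcal A)=P_-$, so the $\ob_\ga$ given by \eqref{eq:mps1again} satisfy \eqref{eq:ks1again}. The converse follows because \eqref{eq:ks1again} determines $(\ob_\ga)$ from $(\oa_\ga)$ uniquely — the coefficient of $x^\ga$ in $P_+$ is $\ob_\ga/\gme$ plus a polynomial in the $\ob_\de$ with $\n\de<\n\ga$, so a degree induction applies — and the same holds for the recursion \eqref{eq:mps1again}, so the two relations coincide.

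The step I expect to be the main obstacle is the first one: recognizing that the $q$-twist produced by the anticlockwise ordered product in $P_-$ is governed by the Euler-type form $r_{\hq}$ of the auxiliary quiver — this is exactly what forces the arrow count $\ang{\be,\al}$ in the definition of $\hq$ — together with keeping consistent throughout the correspondence between the clockwise/anticlockwise orderings in $\bH_+$ and the slope orderings of rays in $\Ga_+^*$.
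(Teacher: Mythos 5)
Your proof is correct and in essence coincides with the paper's: both reduce the statement to the abelian wall-crossing identity (Theorem \ref{th:wall-cross}) for the auxiliary quiver $\hq$, read across the algebra homomorphism of quantum tori induced by $\nn-$ (the paper substitutes $\oa_\ga x^\ga\mapsto x_\ga$, which is exactly your $\Phi$ read backwards). The one genuine variation is in identifying $P_-$: the paper rewrites it as the clockwise product of $c_-$-semistable abelian factors for $\hq$ and computes $f_-(m)$ directly (showing it vanishes unless $\supp m$ lies on a single ray, in which case $\hq(m)$ has no arrows, so $f_-(m)=\gm^{-\n m}$), whereas you bypass $c_-$-semistability and match the $q$-twist produced by the anticlockwise ordered exponential against $q^{\oh r_\hq(m,m)}$, hence against $f_\triv(m)$; these are equivalent computations, but yours makes more transparent why the arrow count $\ang{\be,\al}$ in the definition of $\hq$ is forced. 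You also spell out the uniqueness step needed for the converse (degree induction on $\n\ga$), which the paper leaves implicit in its ``if and only if''.
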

\begin{proof}
We will show that the formula \eqref{eq:mps1again} with the $g(m)$ being abelian quiver invariants implies the KS wall-crossing formula \eqref{eq:ks1again}. Using formula \eqref{eq:mps1again} we obtain
$$\sum_{\ga\in l}\ob_\ga x^\ga
=\sum_{\over{m\in\cP(\Ga_+^*)}{\nn m\in l}}\frac{g(m)}{m!}\bigg(\prod_{\ga\in\Ga_+^*}(\oa_\ga)^{m(\ga)}\bigg)x^{\nn m}
=\sum_{\over{m\in\cP(\Ga_+^*)}{\nn m\in l}}g(m)
\prod_{\ga\in\Ga_+^*}\frac{(\oa_\ga x^\ga)^{m(\ga)}}{m(\ga)!}
.$$
Then the left hand side of \eqref{eq:ks1again} can be written as
\begin{equation*}
\prod^{\curvearrowright}_l\exp
\Bigg(\sum_{\over{m\in\cP(\Ga_+^*)}{\nn m\in l}}
\frac{g(m)}{\gm}\prod_{\ga\in\Ga_+^*}\frac{(\oa_\ga x^\ga)^{m(\ga)}}{m(\ga)!}\Bigg).
\label{eq:left1}
\end{equation*}
For any vertex $\ga\in\hq_0=\Ga_+^*$ there is a variable $x_\ga$ in the quantum torus of $Q$. We substitute each element $\oa_\ga x^\ga$ in the quantum torus of $(\Ga,\ang{\cdot\,,\cdot})$ by the element $x_{\ga}$ in the quantum torus of $Q$. Then \eqref{eq:ks1again} can be written as
$$
\prod^{\curvearrowright}_l
\exp
\Bigg(\sum_{\over{m\in\cP(\Ga_+^*)}{\nn m\in l}}
\frac{g(m)}{\gm}\prod_{\ga\in\Ga_+^*}\frac{x_\ga^{m(\ga)}}{m(\ga)!}\Bigg)
=\prod_{l}^\curvearrowleft\exp\Bigg(
\sum_{\ga\in l}
\frac{x_{\ga}}\gm\Bigg).$$
We note that
$$
\exp\Bigg(\sum_{\nn m\in l}
\frac{g(m)}{\gm}\prod_{\ga\in\Ga_+^*}\frac{x_\ga^{m(\ga)}}{m(\ga)!}\Bigg)
=
\exp\Bigg(\sum_{\nn m\in l}
\frac{g(m)}{\gm}\frac {x^m}{m!}\Bigg).
$$
To prove formula \eqref{eq:ks1again}, we have to show that
$$\prod_l^\curvearrowright\bigg(1+\sum_{\nn m\in l}f_+(m)\frac{x^m}{m!}\bigg)
=\prod_{l}^\curvearrowleft
\exp\Bigg(\sum_{\ga\in l}\frac{x_{\ga}}\gm\Bigg).$$
This will follow from Theorem \ref{th:wall-cross} once we show that the right hand side corresponds to the $c_-$-semistable abelian representations of $\hq$.

Let $m\in\Ga_+(\hq)$ be such that there exist $c_-$-semistable abelian representations of $\hq(m)$. Assume that there are vertices $\ga_i$, $\ga'_j$ of $\hq(m)$ such that $\ga_i<\ga'_j$ (in $c_-$). This means that $\ang{\ga,\ga'}<0$ and there are $\ang{\ga',\ga}$ arrows from $\ga_i$ to $\ga'_j$. Let $\ga_i$ be one of the maximal vertices of $\hq(m)$. Then there are no arrows from $\ga_i$ to other vertices of $\hq(m)$. Therefore, for any abelian representation $M$ of $\hq(m)$, there is a subrepresentation $M'$ concentrated at $\ga_i$ and we have $\udim M<\udim M'$ in $c_-$. This implies that there are no $c_-$-semistable abelian representations of $\hq(m)$ unless we have $\ang{\ga,\ga'}=0$ for any $\ga,\ga'\in\supp m$. This means that $\supp m$ is contained in $l=\bR_{>0}\ga_0$ for some $\ga_0\in\Ga_+^*$. The corresponding invariant of $c_-$-semistable abelian representations is $f_-(m)=\gm^{-\n m}$ (there are no arrows in the quiver $\hq(m)$). 
The sum of these invariants (for a fixed ray $l$) is
$$
\sum_{\supp m\sb l}\gm^{-\n m}\frac{x^m}{m!}
=\sum_{\supp m\sb l}\prod_{\ga\in l}\frac{(\gm^{-1}x_{\ga})^{m(\ga)}}{m(\ga)!}
=\prod_{\ga\in l}\exp\left(\gm\inv x_{\ga}\right)
=\exp\Bigg(\sum_{\ga\in l}\frac{x_{\ga}}\gm\Bigg).$$
This finishes the proof of the theorem.
\end{proof}

\subsection{Interpretation of the MPS formula}\label{mpsformula}
We will interpret here the MPS formula \cite[Eq.~D.6]{manschot_wall} as described in \cite[Theorem 3.5]{reineke_mps} using the language of the previous sections.

Let $Q$ be a quiver with a stability function $Z:\Ga(Q)\to\bC$.
Consider the quiver $Q'$ with vertices $i_l$, where $i\in Q_0$, $l\ge1$. The number of arrows from $i_l$ to $j_{l'}$ is defined as $ll'$ times the number of arrows from $i$ to $j$. Consider the projection $\nn-:\Ga(Q')\to\Ga(Q)$, $i_l\mto li$. It induces a stability function $Z:\Ga(Q')\to\bC$ on $Q'$ and it preserves
the skew-symmetric forms on $Q'$ and $Q$. This implies that it induces an algebra morphism from the quantum torus of $Q'$ to the quantum torus of $Q$.
For $\al\in\Ga_+^*(Q)$ let
$$A_Z(Q,\al)=\frac{[R_Z(Q,\al)]_\vir}{[\GL_\al]_\vir}$$
be the motivic invariant of the moduli stack of $Z$-semistable $Q$-representations having dimension vector \al.
For $m\in\Ga_+^*(Q')$ let
$$f_Z(Q'(m))=\frac{[R_Z(Q'(m))]_\vir}{[(\Gm)^{\n m}]_\vir}$$
be the motivic invariant of the moduli stack of $Z$-semistable abelian representations of $Q'(m)$.

We can write \cite[Theorem 3.5]{reineke_mps} in the form
\begin{equation}
A_Z(Q,\al)=\sum_{\nn m=\al}f_Z(Q'(m))\frac{1}{m!}\prod_{i_l\in Q'_0}\left(\frac{\gme}{l(q^{\frac l2}-q^{-\frac l2})}\right)^{m(i_l)}.
\end{equation}
Consider the algebra homomorphism of quantum tori $\pi:\bT_{Q'}\to\bT_{Q}$ given by
$$x^m\mto x^{\nn m}\prod_{i_l\in Q'_0}\left(\frac{\gme}{l(q^{\frac l2}-q^{-\frac l2})}\right)^{m(i_l)}.
$$
Then the previous equation says that this map sends
$$1+\sum_{Z(m)\in l} f_Z(Q'(m))\frac{x^m}{m!}\mto1+\sum_{Z(\al)\in l} A_Z(Q,\al)x^\al$$
for any ray $l\sb\bH_+$.
It is enough to prove this statement only for a trivial stability, as both sides satisfy the HN recursion (for abelian representations this is proved in Theorem 3.3)
and $\pi$ is an algebra homomorphism. For the trivial stability this statement is proved in the first part of \cite[Theorem 3.5]{reineke_mps}.

\providecommand{\bysame}{\leavevmode\hbox to3em{\hrulefill}\thinspace}
\providecommand{\href}[2]{#2}

 
\end{document}